\newcommand{\tran}{\mathrm{T}}
\newcommand{\bsm}{\begin{smallmatrix}}
\newcommand{\esm}{\end{smallmatrix}}
\newcommand{\bbm}{\begin{bmatrix}}
\newcommand{\ebm}{\end{bmatrix}}
\newcommand{\rank}{\mathrm{rank}}
\newcommand{\ssp}[1]{\mathscr{#1}}      % Subspace notation
\newcommand{\fld}[1]{\mathbb{#1}}       % Field notation
\newcommand{\op}[1]{\mathcal{#1}}       % Operator notation
\newcommand{\spanset}[1]{\ \mathrm{span}\{#1\}} % Notation for span
\newcommand{\pardiff}[2]{\frac{\partial {#1}}{\partial {#2}}}
\newcommand{\RieszBas}[1]{\{#1_i\}_{i \in \fld{I}}}
\newcommand{\SRB}[2]{\{#1_i\}_{i=#2}^\infty}
\newcommand{\CA}{($\op{C}$,$\op{A}$)}
\newcommand{\AB}{($\op{A}$,$\op{B}$)}
\newcommand{\SG}[2]{\fld{#1}_{\op{#2}}}
\newcommand{\dom}[1]{D(\op{#1})}
\newcommand{\ro}[1]{\rho(\op{#1})}
\newcommand{\roi}[1]{\rho_\infty(\op{#1})}
\DeclareMathOperator{\ima}{Im}
\newcommand{\eigssp}[1]{\overline{\spanset{\ssp{E}_i}_{i\in {#1}}}}
\newcommand{\mr}[1]{\mathrm{#1}}
\newcommand{\FrOp}[2]{\underline{\op{#1}}(\ssp{#2})}
\def\QEDclosed{\hfill\IEEEQEDclosed}
\renewcommand{\qed}{\QEDclosed}
\renewenvironment{proof}[1][\proofname]{\noindent\nobreakspace{\bfseries #1}:\;}{\qed\par}
\newtheorem{theorem}{Theorem}
\newtheorem{lemma}{Lemma}
\newtheorem{remark}{Remark}
\newtheorem{definition}{Definition}
\newtheorem{corollary}{Corollary}
\title{\LARGE \bf
Invariant Subspaces of Riesz Spectral Systems with Application to Fault Detection and Isolation*
}
\author{Amir Baniamerian$^{1}$,  Nader Meskin$^{2}$ and Khashayar Khorasani$^{1}$% <-this % stops a space
\thanks{*This publication was made possible by NPRP grant No. 4-195-2-065 from the
Qatar National Research Fund (a member of Qatar Foundation). The
statements made herein are solely the responsibility of the authors.}% <-this % stops a space
\thanks{$^{1}$A. Baniamerian and K. Khorasani are with the Department of Electrical and Computer Engineering,
        Concordia University, Quebec, Canada,
        {\tt\small am\_bani@encs.concordia.ca} and {\tt\small kash@ece.concordia.ca.}}%
\thanks{$^{2}$N. Meskin is with the Department of Electrical Engineering, Qatar University,
        Doha, Qatar
        {\tt\small nader.meskin@qu.edu.qa. }}%
}
\begin{document}

\maketitle

%%%%%%%%%%%%%%%%%%%%%%%%%%%%%%%%%%%%%%%%%%%%%%%%%%%%%%%%%%%%%%%%%%%%%%%%%%%%%%%%
\begin{abstract}
     A large class of hyperbolic and parabolic partial differential equation (PDE) systems, such as reaction-diffusion processes, when expressed in the  infinite-dimensional  (Inf-D) framework can be represented as  Riesz spectral (RS) systems. Compared to the finite dimensional (Fin-D) systems, the geometric theory of Inf-D systems for addressing certain fundamental control problems, such as  disturbance decoupling and fault detection and isolation (FDI), is rather quite limited due to complexity and existence of various types of invariant subspaces notions. Interestingly enough, these invariant concepts are  equivalent for Fin-D systems, although they are different in Inf-D representation. In this work, first  equivalence of various types of invariant subspaces that are defined for RS systems are investigated. This enables one to define and specify  the unobservability subspace for  RS systems. Specifically, necessary and sufficient conditions are derived for equivalence of various types of conditioned invariant subspaces. Moreover, by using duality properties,  various controlled invariant subspaces are developed. It is then shown that finite-rankness of the output operator enables one to derive  algorithms for computing invariant subspaces that under certain conditions, and unlike methods in the literature, converge in a finite number of steps. A geometric  FDI methodology for RS systems is then developed  by invoking the introduced invariant subspaces. Finally, necessary and sufficient conditions for solvability of the FDI problem are provided and analyzed. %???suimulation
\end{abstract}
\begin{IEEEkeywords}
	Riesz spectral (RS) systems, infinite dimensional systems, fault detection and isolation, geometric control approach.
\end{IEEEkeywords}

%=================================INTRODUCTION==================================%
\section{Introduction}\label{Sec:Intro}
The fault detection and isolation (FDI) problem of dynamical systems has  increasingly attracted  interest of researchers during the past two decades \cite{Isermann_Book,Chen_Book, Frank_FDI}. Advances in  control theory have led to development of various  capabilities for  control of quite complex dynamical systems. Due to complexity of these controlled systems one has to investigate and develop  more sophisticated FDI strategies and methodologies  \cite{Isermann_Book}.

A broad class of dynamical systems, ranging from chemical processes in the petroleum industry
to  heat transfer and compression processes in gas turbine engines, are represented by a
set of partial differential equations (PDEs). A large class of  hyperbolic and parabolic PDE systems can be represented and formulated as Riesz Spectral (RS) systems in an infinite dimensional (Inf-D) Hilbert space \cite{Curtainspectral}. The mathematical control theory of systems
governed by PDEs has seen a considerable progress in the past four decades \cite{Krstic_Book2010, Christofides_FaultControl, Lion_Book}. The control
theory of PDEs has been extended from ordinary differential equations (ODEs) by generally invoking  two
methodologies. The first is developed  through  approximation methods and the second  through exact methods. In the former
approach, one first approximates the original PDE by an ODE system (using for example finite element or finite difference methods), and then applies the established control theory
of ODEs to the approximated PDE model \cite{ACC2012, ACC2013, Armo_Demetrio}. In contrast, the latter or the exact approach tackles the PDE system holistically and without invoking any approximation \cite{Curtain_Book, Pazy_Book}.

Through application of  approximate methodologies, the FDI problem of PDEs and Inf-D systems has been investigated in the literature in e.g. \cite{ACC2012, Armo_Demetrio, Davis_Jor} and \cite{Davis_Thesis}. In \cite{ACC2012}, by using a geometric control approach, the FDI problem of a quasilinear parabolic PDE system is addressed. A Lyapunov-based method is proposed in \cite{Armo_Demetrio} for FDI of  a class of parabolic PDEs. However, given that in the above work the error dynamics analysis is based on the singular perturbation theory, only sufficient conditions for solvability of the FDI problem are provided in \cite{ACC2012, Armo_Demetrio, Davis_Jor}.

By using an array of sensors, the FDI problem of a beam structure has been investigated in \cite{SpeyerBeam}. In \cite{ACC2013}, by applying a finite difference method, a hyperbolic PDE is first approximated by a 2D Roesser model, and a geometric FDI approach is then developed.  Finally, the FDI problem of Inf-D systems is investigated in \cite{Demetrio, Demetriou_Adap_Jornal2007, Demetriou_Paper2002} by using exact methods, where an adaptive parameter estimation scheme is used to detect and estimate the fault severity.

The geometric theory of finite dimensional (Fin-D) linear systems was introduced in \cite{Wonham_Book,Basile_Book, Mass_Thesis, Massoumnia_Alone}, where fundamental problems such as disturbance decoupling and FDI problems have been  addressed. The geometric FDI approach has been extended to affine nonlinear systems in \cite{IsidoriCodis, IsidoriFDI}. The FDI problem of  Markovian jump linear systems is investigated in \cite{DrMeskin_Markov_TAC,DrMeskin_Markov}. By applying a discrete event-based FDI logic,  geometric FDI approaches for linear and nonlinear systems  have been extended in \cite{MKR_Nonlin2010} and \cite{MKR_SysTech2010}. Also, in \cite{SpeyerGeo} the geometric FDI approach is equipped with an $H_\infty$ method to enhance the robustness of the detection filters with respect to disturbance and noise signals. However, the geometric FDI approach has \emph{{not}} yet been investigated for Inf-D linear systems in general, and RS systems in particular. In this work, we develop for the \emph{{first time in the literature}} a geometric FDI methodology for RS systems.

In this work, we consider certain invariant subspaces, such as the $\op{A}$-invariant and conditioned invariant subspaces for  RS systems. For Inf-D systems, there are various definitions for $\op{A}$-invariant and conditioned invariant subspaces that are all equivalent in Fin-D systems. Therefore, in this work first  necessary and sufficient conditions for equivalence of various  conditioned invariant subspaces are formally shown for regular RS systems (this is specified formally in the next section). This result plays a crucial role subsequently in solvability of the FDI problem.  Next, by introducing an unobservability subspace we formulate the FDI problem in a geometric framework, and derive necessary and sufficient conditions for solvability of the problem. By utilizing duality notions, necessary and sufficient conditions for equivalence of controlled invariant subspaces are also obtained and derived.

It should be pointed out that in  \cite{ECC2014} we considered real diagonalizable RS systems. In this paper, we investigate invariant subspaces in more detail and derive the results  for more general class of RS systems as compared to those considered in \cite{ECC2014}. More specifically, the RS operator that is considered in this paper can have complex and finitely many multiple eigenvalues. Moreover, the FDI problem for only a diagonal RS system was introduced in \cite{ECC2014}, whereas in this paper, we derive necessary and sufficient
conditions for solvability of the FDI problem for a more general class of RS systems.
%The regular RS systems that are formally addressed in the next section include diagonal RS systems as a subclass.

As shown in \cite{Curtain_Invariance_1986,Pandolfi_Disturbance_86,Zwart_Book}, for a general Inf-D system, the algorithms that are used to compute  invariant subspaces \underline{do not} converge in a finite number of steps. However, as we shall see subsequently, by using the results that are obtained in Section \ref{Sec:InvSpace} and under certain conditions one can compute invariant subspaces of regular RS systems in a finite number of steps. Specifically, we develop two schemes that converge in a finite number of steps for computing the conditioned invariant and unobservability subspaces.

To summarize, and in view of the above discussion the {main contributions} of this paper, and
 {all} developed for the \emph{first time in the literature}, can be listed as follows:
\begin{enumerate}
	\item Necessary and sufficient conditions for equivalence of various conditioned invariant subspaces for RS systems are obtained and analyzed. In the literature, {only} sufficient conditions for
equivalence of conditioned invariant subspaces of multi-input multi-output Inf-D systems are given. However, in this work we provide a \emph{single} necessary and sufficient condition.
	\item By using duality properties, necessary and sufficient conditions for equivalence of various controlled invariant subspaces are provided.
	\item The unobservability subspace for RS systems is introduced, and algorithms for computing this subspace that converge in a finite number of steps are proposed and derived.
	\item By taking advantage of the introduced subspaces, the FDI problem of RS systems is formulated and necessary and sufficient conditions for solvability of the FDI problem are developed and provided.
\end{enumerate}

The remainder of this paper is organized as follows. In Section \ref{Sec:Back}, RS systems are reviewed. Invariant subspaces are introduced, developed, and analyzed in Section \ref{Sec:InvSpace}. In Section \ref{Sec:FDI}, the FDI problem is formulated and necessary and sufficient conditions for  its solvability are provided. A numerical example is provided in Section \ref{Sec:SimResult} to demonstrate the capability of our proposed strategy.
Finally, Section \ref{Sec:Conclusion} provides the conclusions.\\
{\bf Notation:} The subspaces (finite and infinite dimensional) are denoted by $\ssp{A}$, $\ssp{B}$, $\cdots$. The notations $\overline{\ssp{V}}$ and $\ssp{V}^\bot$ denote the closure and orthogonal complement of the subspace $\ssp{V}$, respectively. We use the notation $\ssp{V}_1\perp\ssp{V}_2$ when every vector of $\ssp{V}_1$ is orthogonal to all the vectors of $\ssp{V}_2$. Without any confusion we use the notation $\overline{\lambda}$ to denote the conjugate of a complex number $\lambda$. The set of positive integers, complex, and real numbers are designated by $\fld{N}$, $\fld{C}$, and $\fld{R}$, respectively. The notation $\underline{\fld{N}}$ denotes the set $\fld{N}\cup\{0\}$. Consider a real subspace $\ssp{V}=\overline{\spanset{x_i}_{i\in\fld{I}}}$ ($\fld{I}\subseteq\fld{N}$). The corresponding complex subspace $\ssp{V}_{\fld{C}}$ is defined as all vectors $z$ that can be expressed as $z=\sum_{i\in\fld{I}}\zeta_ix_i$, where $\zeta_i\in\fld{C}$. The maps between two Fin-D vector spaces are designated by $A$, $B$, $\cdots$. The notations $\op{A}$, $\op{B}$, $\cdots$ denote the maps between two vector spaces such that at least one of them is an Inf-D vector space. Specifically,  we use the notations $I$ and $\op{I}$ to denote the identity operator on the Fin-D and Inf-D vector spaces, respectively. $\op{L}(\op{X})$ denotes the set of all bounded operators defined on $\op{X}$. The domain of an unbounded operator $\op{A}$ is denoted by $\dom{A}$. %The subspace $\op{A}^{-1}\ssp{V}$ denotes the inverse image of the subspace $\ssp{V}$ with respect to the operator $\op{A}$.
The operator of strongly continuous ($C_0$) semigroup  that is generated by $\op{A}$ is denoted by $\SG{T}{A}$.
%The largest invariant subspace with respect to $\SG{T}{A}$ that is contained in the subspace $\ssp{V}$ is designated by $<\ssp{V}|\SG{T}{A}>$.
The term $\ro{A}$ denotes the resolvent set of the operator $\op{A}$ (that is, all $\lambda\in\fld{C}$ such that $(\lambda\op{I} -\op{A})^{-1}$ exists and is a bounded operator). The set of all eigenvalues of $\op{A}$ is designated by $\sigma(\op{A})$. The largest real interval $[r,\infty)\subseteq\ro{A}$ is denoted by $\roi{A}$. The other notations are defined within the text of the paper.
%============================================================================== %

%===================BACKGROUND===================================== %
\section{Background}\label{Sec:Back}
In this section, we review some of the basic concepts that are associated with a class of RS systems that will be investigated and further studied in detail in this paper.
\subsection{The Riesz Spectral (RS) Systems}
Consider the following infinite dimensional (Inf-D) system
\begin{equation}\label{Eq:GeneralID}
\begin{split}
\dot{x}(t)&=\op{A}x(t)+\op{B}u(t),\; x(0) = x_0,\\
y(t) &= \op{C}x(t),
\end{split}
\end{equation}
where $x(t)\in\op{X}$, $u(t)\in\fld{R}^m$ and $y(t)\in\fld{R}^q$ denote the state, input and output vectors, respectively,  and $\op{X}$ is a real  Inf-D separable Hilbert space
%(refer to \cite{Curtain_Book}-Section A.2, for more detail on the separable Hilbert spaces)
equipped with the dot-product $<\cdot,\cdot>$. Moreover, we consider the following {finite rank} output operator
\begin{equation}\label{Eq:OuputOperator}
\op{C}=\bbm <c_1,\cdot>, <c_2,\cdot>, \cdots, <c_q,\cdot>\ebm^\tran,
\end{equation}
and the finite rank operator $\op{B}$ is defined as $\op{B}=\sum_{i=1}^mb_iu_i$, where $b_i\in\op{X}$ and $u = [u_1,\cdots,u_m]^\tran$.

%Also, we consider the following assumption on the Inf-D system \eqref{Eq:GeneralID}.
%\paragraph{Assumption 1}: Consider the finite rank operators $\op{C}$ and $\op{B}$ defined above. We assume that
%\begin{enumerate}
%	\item $c_i\in\dom{A^*}$, where $\op{A}^*$ is the adjoint operator of $\op{A}$.
%	\item $b_i\in\dom{A}$.
%\end{enumerate}
%This assumption enables us to show a prominent geometric property (refer to Lemma 7). Note that by this assumption it is guaranteed that $\overline{\dom{A}\cap\ker\op{C}} = \ker\op{C}$.

Moreover, we assume that the model \eqref{Eq:GeneralID} represents a well-posed system. This implies that the solution of system \eqref{Eq:GeneralID} is continuous with respect to the initial conditions for all $u(t)\in\fld{R}^m$ \cite{Curtain_Book}. This assumption is equivalent to stating that $\op{A}$ is closed and the infinitesimal generator of a strongly continuous ($C_0$) semigroup $\SG{T}{A}(t)$ is uniquely defined by $\op{A}$.
A $C_0$ semigroup $\fld{T}:\fld{R}^+\rightarrow\op{L}(\op{X})$ is the operator where the following conditions hold (\cite{Curtain_Book} Definition 2.1.2):
\begin{itemize}
	\item $\fld{T}(t+s) = \fld{T}(t)\fld{T}(s)\;$ for all $t,s\ge 0$.
	\item $\fld{T}(0) = \op{I}$.
	\item If $t\rightarrow 0^+$, then $||\fld{T}(t)x-x||\rightarrow 0\;$ for all $x\in\op{X}$.
\end{itemize}
%\cite{Curtain_Book} (for definition of a $C_0$ semigroup, refer to \cite{Curtain_Book}, Chapter 1).

Note that the solution of  system \eqref{Eq:GeneralID}  is given by $x(t) = \SG{T}{A}(t)x_0 + \int_0^t \SG{T}{A}(t-s)\op{B}u(s)\mathrm{d}s$ \cite{Curtain_Book}, where $x_0\in \op{X}$ denotes the initial condition. The following definitions are crucial for specifying the target system that is considered in this paper.
\begin{definition}\label{Def:RieszBasis}(\cite{Curtain_Book} - Definition 2.3.1)
	The set of vectors $\RieszBas{\phi}$, $\fld{I}\subseteq\fld{N}$ is called the Riesz basis for the Hilbert space $\op{X}$ if
	\begin{itemize}
		\item $\overline{\spanset{\phi_i}_{i\in\fld{I}}} = \op{X}$.
		\item There exist two positive numbers $M_1$ and $M_2$ (independent of $n$) such that for any $n\in\fld{N}$, we have $M_1\sum_{k=1}^n |\alpha_k|^2\leq ||\sum_{k=1}^n \alpha_k \phi_i||^2\leq M_2\sum_{k=1}^n |\alpha_k|^2$,  where $||\cdot||$ denotes the norm induced from $<\cdot,\cdot>$ and $\alpha_k\in\fld{R}$, $k=1\cdots,n$.
	\end{itemize}
\end{definition}
It can be shown (\cite{Curtain_Book}, Section 2.3) that if $\RieszBas{\phi}$ is a Riesz basis for $\op{X}$, then there exists a set of vectors $\RieszBas{\psi}$ such that $\psi_i\in\op{X}$ and $<\psi_i,\phi_k>=\delta_{ik}$ ($\delta_{ik}$ denotes the Dirac delta function), for all $i,k\in\fld{I}$. In other words, $\psi_i$'s and $\phi_k$'s are biorthonormal vectors \cite{Curtain_Book}.
The following lemma provides an important feature and property  of the Riesz basis.
\begin{lemma}(\cite{Curtain_Book}, Lemma 2.3.2-b)\label{Lem:RSUniqueRepresentation}
	Consider the Riesz basis  $\RieszBas{\phi}$ of the Hilbert space $\op{X}$. Then every $z\in\op{X}$ can be {uniquely} represented as $z=\sum_{i\in\fld{I}}<z,\psi_i>\phi_i$.
\end{lemma}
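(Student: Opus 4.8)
The plan is to reformulate the statement in terms of the synthesis operator $S:\ell^2(\fld{I})\to\op{X}$ defined on finitely supported sequences by $S(\{\alpha_i\})=\sum_{i\in\fld{I}}\alpha_i\phi_i$, and to read off both existence and uniqueness from the fact that the two-sided Riesz inequality of Definition \ref{Def:RieszBasis} makes $S$ a bounded, bounded-below bijection. First I would verify that $S$ extends to a bounded operator on all of $\ell^2(\fld{I})$: for any $\ell^2$ sequence $\{\alpha_i\}$, the upper bound $\|\sum_{k=m}^n\alpha_k\phi_k\|^2\le M_2\sum_{k=m}^n|\alpha_k|^2$ shows the partial sums form a Cauchy sequence in $\op{X}$, so the series converges and $\|S\|^2\le M_2$. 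The lower bound then yields $\|S(\{\alpha_i\})\|^2\ge M_1\|\{\alpha_i\}\|_{\ell^2}^2$, so $S$ is injective with closed range.

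Next I would establish surjectivity. Since $\ima S$ is closed and contains every finite linear combination of the $\phi_i$, it contains $\spanset{\phi_i}_{i\in\fld{I}}$ and hence its closure; by the first Riesz-basis axiom this closure is $\op{X}$, so $S$ is onto. Thus $S$ is a bounded bijection between Hilbert spaces, and the open mapping (bounded inverse) theorem provides a bounded inverse $S^{-1}$. Consequently every $z\in\op{X}$ admits a representation $z=\sum_{i\in\fld{I}}\alpha_i\phi_i$ with $\{\alpha_i\}=S^{-1}z\in\ell^2(\fld{I})$, and injectivity of $S$ makes this $\ell^2$ coefficient sequence unique.

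It remains to identify the coefficients with $<z,\psi_i>$ and to upgrade uniqueness to all convergent representations. For the former, I would fix $k$, write $z=\sum_i\alpha_i\phi_i$, and use continuity of the inner product together with the biorthonormality $<\psi_i,\phi_k>=\delta_{ik}$ to compute $<z,\psi_k>=\sum_i\alpha_i<\phi_i,\psi_k>=\alpha_k$, which gives $\alpha_k=<z,\psi_k>$ as claimed. For the latter, if $\sum_i\alpha_i\phi_i$ converges in $\op{X}$ then the Cauchy criterion combined with the lower bound $\|\sum_{k=m}^n\alpha_k\phi_k\|^2\ge M_1\sum_{k=m}^n|\alpha_k|^2$ forces $\{\alpha_i\}\in\ell^2(\fld{I})$; hence every convergent representation is an $\ell^2$ one, and uniqueness follows from the injectivity of $S$.

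The main obstacle I anticipate is the passage from the finite-sum inequalities in Definition \ref{Def:RieszBasis} to statements about genuinely infinite series: one must justify convergence of $\sum_i\alpha_i\phi_i$, the closedness of $\ima S$, and the interchange of the infinite sum with the inner product, each of which rests on the Cauchy estimates supplied by $M_1$ and $M_2$. Once $S$ is known to be a bounded-below bijection, surjectivity via density and invertibility via the open mapping theorem are the decisive structural steps, after which the coefficient identity is a routine continuity argument.
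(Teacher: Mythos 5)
Your argument is correct. The paper does not prove this lemma at all --- it is quoted verbatim from the cited reference (\cite{Curtain_Book}, Lemma 2.3.2) --- and your synthesis-operator proof (boundedness and bounded-belowness of $S$ from the two Riesz inequalities, surjectivity from density of $\spanset{\phi_i}_{i\in\fld{I}}$, and coefficient identification via biorthonormality) is precisely the standard argument given there; the only cosmetic remark is that boundedness below already yields $\|S^{-1}\|\le M_1^{-1/2}$ directly, so the appeal to the open mapping theorem is not needed.
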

To define a regular RS operator, we need the following projection operator for each eigenvalue $\lambda_i$ of $\op{A}$ \cite{Schwartz1954}, namely
\begin{equation}\label{Eq:EigenPro}
\op{P}_i:\op{X}\rightarrow\op{X},\;\; \op{P}_i = \frac{1}{2\pi j}\int_{\Gamma_{i}} (\lambda\op{I}-\op{A})^{-1} \text{d}\lambda,
\end{equation}
where $i\in\fld{I}_\lambda$ ($\fld{I}_\lambda$ is an index set for $\sigma(\op{A})$), $\Gamma_i$ is a simple closed curve surrounding only the eigenvalue $\lambda_i$. This represents the projection on the subspace of generalized eigenvectors of $\op{A}$ corresponding to $\lambda_i$, that is, the subspace spanned by all $\phi_i$'s satisfying $(\lambda_i\op{I}-\op{A})^n\phi_i=0$, for some positive integer $n$.
\begin{definition}\label{Def:RegularRS}\cite{Schwartz1954}
	The operator $\op{A}$ is called  a {regular}  RS operator, if
	\begin{enumerate}
			\item All but finitely many of the eigenvalues (with finite multiplicity) are simple.
			\item The (generalized) eigenvectors of the operator $\op{A}$, $\RieszBas{\phi}$, form a Riesz basis for $\op{X}$ (but defined on the field $\fld{C}$), and consequently,  ${\sum_{i\in\fld{I}_\lambda}\op{P}_i} = \op{I}$ (that is an identity operator on $\op{X}$).  %This issue will be subsequently investigated in more detail,.
	\end{enumerate}
\end{definition}
\begin{remark}\label{Rem:CondOnDefRegRSOp}
	As we shall see subsequently, to derive a necessary condition for solvability of the FDI problem, it is necessary that a bounded perturbation of $\op{A}$ (that is, $\op{A+D}$ where $\op{D}$ is a bounded operator) is also a regular RS operator. This property  holds if $\sum_{i} \frac{1}{d_i^2}<\infty$, where $d_i = \inf_{\lambda\in\sigma(\op{A})-\{\lambda_i\}} |\lambda-\lambda_i|$ \cite{Schwartz1954} (Theorem 1). Therefore, in this paper it is assumed that the operator $\op{A}$ satisfies the above condition. It should be pointed out that a large class of RS systems, including  discrete RS systems satisfy this condition \cite{ZwartGeneralEig2001}.\qed
\end{remark}
%An important result that is related to the regular RS operator is provided in the following lemma. This lemma will be used subsequently in Section \ref{Sec:FDI}.
%\begin{lemma}\label{Lem:BoundedPerturb_RS_Op} \cite{Schwartz1954} (Theorem 1)
%	Consider the regular RS operator $\op{A}:\op{X}\rightarrow\op{X}$ (refer to the Definition \ref{Def:RegularRS}) that satisfies the condition in Remark \ref{Rem:CondOnDefRegRSOp} and a bounded operator $\op{F}:\op{X}\rightarrow\op{X}$. Then the operator $\op{A+F}$ is also a regular RS operator. %\qed
%\end{lemma}

If the operator $\op{A}$ in the system \eqref{Eq:GeneralID} is a regular RS operator and the operators $\op{B}$ and $\op{C}$ are bounded and finite rank we designate the system \eqref{Eq:GeneralID} as a regular RS system. Moreover, the system \eqref{Eq:GeneralID} is well-posed if and only if $ \underset{\lambda_i\in\sigma(\op{A})}{\text{sup}} \lambda_i <\infty$ (this  is a feasible assumption from the applications point of view)\cite{Chen_Book}. Also, according to the Definitions \ref{Def:RieszBasis} and \ref{Def:RegularRS}, one can show that \cite{ZwartGeneralEig2001}
\begin{equation}\label{Eq:A_TDef}
\begin{split}
\op{A} &= \sum_i\lambda_i\sum_{k=1}^{n_i}<\cdot,\psi_{i,k}>\phi_{i,k},\;\;
%\SG{T}{A}(t) = \sum_i e^{\lambda_it}\sum_{k=1}^{n_i}<\cdot,\psi_{i,k}>\phi_{i,k} .
\end{split}
\end{equation}
where $n_i$ denotes the number of (generalized) eigenvectors corresponding to the eigenvalues $\lambda_i$ (if $\lambda_i$ is a distinct eigenvalue then $n_i=1$, and if $\lambda_i$ is repeated we have $n_i>1$). Also, $\phi_{i,k}$'s and $\psi_{i,k}$'s are the (generalized) eigenvectors and the corresponding biorthonormal vectors of  $\lambda_i$, respectively.

Given that we are interested in RS systems that are defined on the field $\fld{R}$, we need to work with eigenspaces instead of eigenvectors (eigenvalues and eigenvectors in \eqref{Eq:A_TDef} can be complex). If an eigenvalue is real, the corresponding eigenspace is equal to $\op{P}_i\op{X}$, where $\op{P}_i$ is the corresponding projection that is defined in \eqref{Eq:EigenPro}. Let $\lambda=a + jb$ and $\overline{\lambda} = a-jb$ be a pair of complex conjugate eigenvalues of $\op{A}$. Since $\op{A}$ is a real operator, it is easy to show that if  $\phi = v_1+ jv_2$ is a (generalized) eigenvector corresponding to $\lambda$, then $\overline{\phi} = v_1- jv_2$ is a (generalized) eigenvector corresponding to $\overline{\lambda}$ (the conjugate of $\lambda$). The corresponding \emph{real} eigenspace to $\lambda$ and $\overline{\lambda}$ is constructed by  $\mr{span}\{v_1^i,v_2^i\}_{i=1}^{n}$, where $v_1^i\pm jv_2^i$ correspond to the (generalized) eigenvectors of $\op{A}$, and $n$ denotes the algebraic multiplicity of $\lambda$. We denote the real eigenspace of $\op{A}$ corresponding to $\lambda_i$ by $\ssp{P}_i$. It should be pointed out that $\dim(\ssp{P}_i) = n_i$ and $\dim(\ssp{P}_i) = 2n_i$ for real and complex eigenvalue $\lambda_i$, respectively (where $n_i$ is the algebraic multiplicity of $\lambda_i$). Note that Condition 2 in Definition \ref{Def:RegularRS} implies that $\overline{\sum_{i\in\fld{I}_\lambda}\ssp{P}_i} = \op{X}$ (defined on $\fld{R}$). Also,  we have  $\ssp{P}_i\subseteq\dom{A}$ and $\op{A}\ssp{P}_i\subseteq\ssp{P}_i$. Moreover, we designate the subspace $\ssp{E}_i\subseteq\ssp{P}_i$ as a \underline{sub-eigenspace} if $\op{A}\ssp{E}_i\subseteq\ssp{E}_i$.

 \begin{remark}\label{Rem:SubEig_SimpleEig}
 It is worth noting that  the only proper sub-eigenspace of an eigensapce corresponding to a simple eigenvalue  is $0$. In other words, let $\ssp{P}$ be an eigenspace corresponding to a simple eigenvalue $\lambda_0$. If $\ssp{E}\subset\ssp{P}_0$ (and $\ssp{E}\neq\ssp{P}_0$), then $\op{A}\ssp{E}\subseteq\ssp{E}$ implies $\ssp{E}=0$.
 \end{remark}
%============================================================================= %

%==================INVARIANT SUBSPACES======================= %
\section{Invariant Subspaces}\label{Sec:InvSpace}
Invariant subspaces play a prominent role in the geometric control theory of dynamical systems \cite{Wonham_Book,DynFeedbackInf_Book,Massoumnia_Alone,Zwart_Book}. For the FDI problem (which is formally defined in Section \ref{Sec:FDI}), one requires to work with three invariant subspaces, namely $\op{A}$-invariant, conditioned invariant, and unobservability subspaces. To investigate the disturbance decoupling problem (refer to \cite{Wonham_Book} for more detail), one deals with controlled invariant and controllability subspaces that are dual  to conditioned invariant and unobservability subspaces, respectively \cite{Mass_Thesis}.

In the literature,  $\op{A}$-invariant and conditioned invariant subspaces have been introduced for Inf-D systems \cite{DynFeedbackInf_Book, Curtain_Invariance_1986, Pandolfi_Disturbance_86,Curtainspectral}. Due to complexity of Inf-D systems, various kinds of invariant subspaces are available (although these are all equivalent in Fin-D systems). The necessary and sufficient conditions for equivalence of $\op{A}$-invariant subspaces have been obtained in the literature \cite{Curtain_Book}. However, for equivalence of conditioned invariant subspaces, the  results that are available are {only} limited to sufficient conditions. In the following subsections, we first review invariant subspaces and provide necessary and sufficient conditions for equivalence of conditioned invariant subspaces for regular RS systems.  Then, by invoking  duality properties,  necessary and sufficient conditions for equivalence of controlled invariant subspace are shown formally. Moreover, an unobservability subspace for RS systems is also introduced.

Generally, for Inf-D systems the algorithms that are developed to compute  invariant subspaces require an {infinite} number of steps to converge. In this section, it is shown that the finite-rankness of the output operator enables us, for the \textit{first time} in the literature, to develop algorithms for computing  conditioned invariant and unobservability subspaces that converge in a {finite} number of steps.
\subsection{$\op{A}$-Invariant Subspace}
There are two different definitions that are related to the $\op{A}$-invariance property. Unlike Fin-D systems, these definitions are not equivalent for Inf-D systems.  In this subsection, we review these definitions and investigate various types of unobservable subspaces for the RS system \eqref{Eq:GeneralID}.
\begin{definition}\label{Def:AInv} \cite{DynFeedbackInf_Book}
	\begin{enumerate}
		\item The closed subspace $\ssp{V}\subseteq\op{X}$ is called $\op{A}$-invariant if $\op{A}(\ssp{V}\cap D(\op{A}))\subseteq \ssp{V}$.
		\item The closed subspace $\ssp{V}\subseteq\op{X}$ is $\SG{T}{A}$-invariant if $\SG{T}{A}(t)\ssp{V}\subseteq\ssp{V}$ for all $t\in[0,\infty)$, where $\SG{T}{A}$ denotes the $C_0$ semigroup generated by $\op{A}$.
	\end{enumerate}
\end{definition}

For the Fin-D systems, items 1) and 2) in the above definition are equivalent, however for Inf-D systems, item 2) is stronger than item 1). In other words, every $\SG{T}{A}$-invariant subspace is $\op{A}$-invariant, however the reverse is not valid in general  \cite{DynFeedbackInf_Book}. In the geometric control theory of dynamical systems, one needs subspaces that are $\SG{T}{A}$-invariant. Since dealing with  $\SG{T}{A}$-invariant subspaces is more challenging than $\op{A}$-invariant subspaces, we are interested in cases where they are equivalent. For a general Inf-D system, a sufficient condition to have this equivalence is $\ssp{V}\subseteq D(\op{A})$ \cite{DynFeedbackInf_Book}, which is quite a restricted and limited condition. However, the following lemma provides necessary and sufficient conditions for $\fld{T}_\op{A}$-invariance property.
\begin{lemma}\cite{Curtain_Book} (Lemma 2.5.6)\label{Lem:A-1_T-Inv}
	Consider an infinitesimal generator $\op{A}$ (more general than RS operators), and its corresponding $\SG{T}{A}$ operator and a closed subspace $\ssp{V}$. Then $\ssp{V}$ is $\SG{T}{A}$-invariant if and only if $\ssp{V}$ is $(\lambda\op{I} -\op{A})^{-1}$-invariant, where $\lambda\in\roi{A}$. %\qed
\end{lemma}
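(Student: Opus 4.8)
The plan is to prove the biconditional of Lemma~\ref{Lem:A-1_T-Inv} by establishing each implication separately, exploiting the standard relationship between a $C_0$ semigroup and the resolvent of its generator on the resolvent set. The central analytic tool will be the Laplace-transform representation of the resolvent, namely that for $\lambda$ in the appropriate half-line $\roi{A}$ (where $[\lambda,\infty)\subseteq\ro{A}$ guarantees $\lambda>\omega_0$, the growth bound of the semigroup), one has the Hille--Yosida-type identity
\begin{equation}\label{Eq:ResLaplace}
(\lambda\op{I}-\op{A})^{-1}x = \int_0^\infty e^{-\lambda t}\,\SG{T}{A}(t)x\,\mathrm{d}t,\quad x\in\op{X}.
\end{equation}

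First I would prove the forward direction, assuming $\ssp{V}$ is $\SG{T}{A}$-invariant and showing $\ssp{V}$ is $(\lambda\op{I}-\op{A})^{-1}$-invariant. Take any $x\in\ssp{V}$. For each $t\ge 0$ the integrand $e^{-\lambda t}\SG{T}{A}(t)x$ lies in $\ssp{V}$ by hypothesis, so the integral in~\eqref{Eq:ResLaplace} is a limit of Riemann sums, each of which is a finite $\fld{R}$-linear combination of vectors of $\ssp{V}$ and hence lies in $\ssp{V}$. Because $\ssp{V}$ is closed, the limit $(\lambda\op{I}-\op{A})^{-1}x$ also lies in $\ssp{V}$; this is exactly $(\lambda\op{I}-\op{A})^{-1}$-invariance. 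The key points to state carefully are the absolute convergence of~\eqref{Eq:ResLaplace} (guaranteed by $\lambda\in\roi{A}$), the fact that a closed subspace is closed under the relevant limits of these vector-valued integrals, and the strong continuity of $\SG{T}{A}(\cdot)x$ which makes the Riemann-sum approximation valid.

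For the converse, assume $\ssp{V}$ is $(\lambda\op{I}-\op{A})^{-1}$-invariant for some fixed $\lambda\in\roi{A}$, and aim to recover $\SG{T}{A}(t)\ssp{V}\subseteq\ssp{V}$ for all $t\ge 0$. The natural route is to reconstruct the semigroup from the resolvent. I would first observe that invariance under $(\lambda\op{I}-\op{A})^{-1}$ immediately propagates to all powers, i.e. $\ssp{V}$ is $[(\lambda\op{I}-\op{A})^{-1}]^n$-invariant for every $n$, and then use the Yosida approximation: the bounded operators $\op{A}_\mu = \mu^2(\mu\op{I}-\op{A})^{-1}-\mu\op{I}$ (for $\mu$ large) leave $\ssp{V}$ invariant, and consequently so do the uniformly continuous semigroups $e^{t\op{A}_\mu}$, since each $e^{t\op{A}_\mu}$ is a norm-convergent power series in $(\mu\op{I}-\op{A})^{-1}$ with real coefficients, keeping the closed subspace $\ssp{V}$ invariant at every partial sum. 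Passing to the limit $\mu\to\infty$ gives $e^{t\op{A}_\mu}x\to\SG{T}{A}(t)x$ strongly for each $x$, and closedness of $\ssp{V}$ then yields $\SG{T}{A}(t)x\in\ssp{V}$ whenever $x\in\ssp{V}$. Since the lemma is attributed verbatim to \cite{Curtain_Book}, one may alternatively simply cite it, but the reconstruction above is the honest argument behind it.

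The main obstacle I anticipate is the converse direction, and specifically justifying the passage from resolvent-invariance to semigroup-invariance rigorously. One must be careful that a single value $\lambda\in\roi{A}$ suffices: invariance under one resolvent does give invariance under every resolvent $(\mu\op{I}-\op{A})^{-1}$ for $\mu\in\roi{A}$ via the resolvent identity $(\mu\op{I}-\op{A})^{-1}=(\lambda\op{I}-\op{A})^{-1}[\op{I}+(\lambda-\mu)(\mu\op{I}-\op{A})^{-1}]$, which again preserves $\ssp{V}$ by the same closed-subspace and power-series reasoning. The other delicate point, common to both directions, is handling the real-field structure: since $\ssp{V}$ is a real closed subspace while eigenvalues and the complexified resolvent may be complex, I would restrict attention to real $\lambda\in\roi{A}$ (which exist precisely because $\roi{A}$ is a real interval $[r,\infty)$), so that all coefficients appearing in the Riemann sums and power-series expansions are real and no passage to $\ssp{V}_{\fld{C}}$ is needed.
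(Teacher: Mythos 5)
The paper offers no proof of this lemma at all---it is quoted verbatim from \cite{Curtain_Book} (Lemma 2.5.6)---so your reconstruction is being compared against a bare citation. Your overall strategy is the standard and correct one: the Laplace-transform representation of the resolvent plus closedness of $\ssp{V}$ for the forward direction, and the Yosida approximants $\op{A}_\mu=\mu^2(\mu\op{I}-\op{A})^{-1}-\mu\op{I}$ together with $e^{t\op{A}_\mu}x\to\SG{T}{A}(t)x$ for the converse. This is essentially how the cited source argues, and the real-field remark at the end (restricting to real $\lambda$, which is exactly what $\roi{A}=[r,\infty)$ provides) is a point worth making in the context of this paper.

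Two technical slips should be repaired. First, your parenthetical claim that $\lambda\in\roi{A}$ ``guarantees $\lambda>\omega_0$'' is false: $\roi{A}$ is the largest real interval $[r,\infty)$ contained in $\ro{A}$, and since the growth bound of a $C_0$ semigroup can strictly exceed the spectral bound of its generator, $\roi{A}$ may contain points $\lambda\le\omega_0$ at which the integral $\int_0^\infty e^{-\lambda t}\SG{T}{A}(t)x\,\mathrm{d}t$ diverges. The forward direction therefore only works directly for $\lambda>\omega_0$; to cover all of $\roi{A}$ you must invoke the same propagation argument you sketch for the converse. Second, the resolvent identity you write, $(\mu\op{I}-\op{A})^{-1}=(\lambda\op{I}-\op{A})^{-1}[\op{I}+(\lambda-\mu)(\mu\op{I}-\op{A})^{-1}]$, contains $(\mu\op{I}-\op{A})^{-1}$ on both sides and so cannot by itself transfer invariance from $\lambda$ to $\mu$. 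The usable form is the Neumann expansion $(\mu\op{I}-\op{A})^{-1}=\sum_{n\ge0}(\lambda-\mu)^n(\lambda\op{I}-\op{A})^{-(n+1)}$, valid for $|\lambda-\mu|\,\|(\lambda\op{I}-\op{A})^{-1}\|<1$; each partial sum preserves the closed subspace $\ssp{V}$, the series converges in operator norm, and a standard open-and-closed (connectedness) argument on the interval $[r,\infty)$ then extends invariance from a single $\lambda$ to all of $\roi{A}$. With these two repairs your proof is complete and self-contained.
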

Another important result on $\SG{T}{A}$-invariant subspaces for a regular RS system that is provided in \cite{Zwart_Book} (Theorem IV.6) is given next.
\begin{lemma}\label{Lem:EqualInvSpace_Zwart}\cite{Zwart_Book}
	Consider the Inf-D system \eqref{Eq:GeneralID}, where $\op{A}$ is a regular RS operator and the $\op{A}$-invariant subspace is denoted by $\ssp{V}$. Then $\ssp{V}$ is $\SG{T}{A}$-invariant if and only if $\ssp{V}=\overline{\spanset{\ssp{D}_i}_{i\in\fld{I}_1}}$, where $\fld{I}_1\subseteq\fld{N}$ and $\ssp{D}_i\subseteq\op{P}_i\op{X}$, is $\op{A}$-invariant. %\qed
\end{lemma}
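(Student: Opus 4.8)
The statement asserts an equivalence, and since this is quoted from \cite{Zwart_Book} (Theorem IV.6), my task is to reconstruct a proof from the structural tools already in the excerpt, chiefly Lemma \ref{Lem:A-1_T-Inv} (which reduces $\SG{T}{A}$-invariance to invariance under the resolvent $(\lambda\op{I}-\op{A})^{-1}$ for $\lambda\in\roi{A}$) and the spectral decomposition \eqref{Eq:A_TDef} together with the projections $\op{P}_i$ of \eqref{Eq:EigenPro}. My plan is to prove both directions by translating everything into statements about the resolvent and then exploiting the fact that, for a regular RS operator, the resolvent is diagonal with respect to the eigenprojections.

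First I would establish the easy direction. Suppose $\ssp{V}=\overline{\spanset{\ssp{D}_i}_{i\in\fld{I}_1}}$ with each $\ssp{D}_i\subseteq\op{P}_i\op{X}$ and $\op{A}$-invariant (equivalently each $\ssp{D}_i$ is a sub-eigenspace). On the finite-dimensional piece $\op{P}_i\op{X}$ the operator $\op{A}$ acts as multiplication built from $\lambda_i$ as in \eqref{Eq:A_TDef}, so the resolvent $(\lambda\op{I}-\op{A})^{-1}$ restricted to $\op{P}_i\op{X}$ is a polynomial in $\op{A}|_{\op{P}_i\op{X}}$ (via the Neumann/functional-calculus expression on that finite-dimensional invariant block); hence $\op{A}$-invariance of $\ssp{D}_i$ forces $(\lambda\op{I}-\op{A})^{-1}\ssp{D}_i\subseteq\ssp{D}_i$ for every $\lambda\in\roi{A}$. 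Since the resolvent is bounded it commutes with closure, so $(\lambda\op{I}-\op{A})^{-1}\ssp{V}\subseteq\ssp{V}$, and Lemma \ref{Lem:A-1_T-Inv} then yields $\SG{T}{A}$-invariance of $\ssp{V}$.

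For the converse I would start from a closed $\op{A}$-invariant $\ssp{V}$ that is $\SG{T}{A}$-invariant, and use Lemma \ref{Lem:A-1_T-Inv} to get $(\lambda\op{I}-\op{A})^{-1}\ssp{V}\subseteq\ssp{V}$ for $\lambda\in\roi{A}$. The crucial step is to recover the eigenprojections from the resolvent: by \eqref{Eq:EigenPro}, $\op{P}_i=\frac{1}{2\pi j}\int_{\Gamma_i}(\lambda\op{I}-\op{A})^{-1}\,\mathrm{d}\lambda$, and I would argue that invariance of the closed subspace $\ssp{V}$ under every resolvent propagates through this contour integral (as a norm limit of Riemann sums, each summand keeping us inside the closed $\ssp{V}$) to give $\op{P}_i\ssp{V}\subseteq\ssp{V}$. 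Setting $\ssp{D}_i:=\op{P}_i\ssp{V}\subseteq\op{P}_i\op{X}$, $\op{A}$-invariance of $\ssp{V}$ restricted to the finite-dimensional block shows each $\ssp{D}_i$ is $\op{A}$-invariant. Finally, because $\sum_{i}\op{P}_i=\op{I}$ (Condition 2 of Definition \ref{Def:RegularRS}) and the $\phi_{i,k}$ form a Riesz basis, every $x\in\ssp{V}$ is the norm-convergent sum $\sum_i\op{P}_ix\in\sum_i\ssp{D}_i$, which gives $\ssp{V}\subseteq\overline{\spanset{\ssp{D}_i}_{i\in\fld{I}_1}}$; the reverse containment is immediate since each $\ssp{D}_i\subseteq\ssp{V}$ and $\ssp{V}$ is closed.

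The main obstacle I anticipate is the rigorous justification that the contour integral \eqref{Eq:EigenPro} preserves the closed subspace $\ssp{V}$, i.e. the interchange of ``belongs to a closed subspace'' with the integral limit: one must ensure the resolvent is bounded and continuous in $\lambda$ along $\Gamma_i$ so that the defining Riemann sums converge in norm while remaining in $\ssp{V}$, which uses closedness of $\ssp{V}$ in an essential way. A secondary technical point is that $\Gamma_i$ may enclose a complex eigenvalue, so to stay within a real subspace one should pair $\lambda_i$ with its conjugate $\overline{\lambda}_i$ and use the real eigenspace $\ssp{P}_i=\mathrm{span}\{v_1^i,v_2^i\}$ described before Remark \ref{Rem:SubEig_SimpleEig}; since $\op{A}$ and $\ssp{V}$ are real this real projection is again a $\ssp{V}$-preserving bounded operator, so the argument goes through without leaving the real setting.
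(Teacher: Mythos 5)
The paper does not prove this lemma at all: it is imported verbatim from \cite{Zwart_Book} (Theorem IV.6) and used as a black box, so there is no in-paper proof to compare against. Judged on its own, your reconstruction follows the standard spectral-projection route and is essentially sound: the ``if'' direction (resolvent invariance of each finite-dimensional block via Cayley--Hamilton, then boundedness of the resolvent plus closedness of $\ssp{V}$ to pass to the closure, then Lemma \ref{Lem:A-1_T-Inv}) is complete, and the ``only if'' direction (recover $\op{P}_i\ssp{V}\subseteq\ssp{V}$ from the contour integral, set $\ssp{D}_i=\op{P}_i\ssp{V}=\ssp{V}\cap\op{P}_i\op{X}$, check $\op{A}$-invariance of $\ssp{D}_i$ on the block, and reassemble $x=\sum_i\op{P}_ix$ using $\sum_i\op{P}_i=\op{I}$) is the right skeleton.

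There is, however, one step you gloss over that genuinely needs an argument. Lemma \ref{Lem:A-1_T-Inv} gives invariance of $\ssp{V}$ under $(\lambda\op{I}-\op{A})^{-1}$ only for $\lambda\in\roi{A}$, i.e.\ for \emph{real} $\lambda$ in a half-line $[r,\infty)$, whereas your Riemann-sum argument for $\op{P}_i$ requires invariance under $(\lambda\op{I}-\op{A})^{-1}$ at the \emph{complex} points $\lambda\in\Gamma_i$. You write ``invariance of $\ssp{V}$ under every resolvent,'' but that is precisely what is not yet established. The fix is standard but must be stated: if $(\lambda_0\op{I}-\op{A})^{-1}\ssp{V}\subseteq\ssp{V}$ for one $\lambda_0\in\ro{A}$, the Neumann expansion $(\lambda\op{I}-\op{A})^{-1}=\sum_{n\ge 0}(\lambda_0-\lambda)^n\bigl((\lambda_0\op{I}-\op{A})^{-1}\bigr)^{n+1}$ together with closedness of $\ssp{V}$ propagates the invariance to a neighborhood of $\lambda_0$, and since the spectrum of a regular RS operator is a countable set together with its limit points, $\ro{A}$ is connected and the invariance extends to all of $\ro{A}$, in particular to $\Gamma_i$. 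With that inserted, your contour-integral step goes through exactly as you describe. A minor secondary point: the conjugate-eigenvalue/real-subspace discussion in your last paragraph is not needed for this lemma --- the paper explicitly notes that Lemma \ref{Lem:EqualInvSpace_Zwart} is stated for complex subspaces, and the passage to real subspaces is the content of the separate Corollary \ref{Col:A-T-Inv}.
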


As stated in the preceding section, the  eigenvalues (and the  corresponding eigenvectors) of $\op{A}$ may be complex, and Lemma \ref{Lem:EqualInvSpace_Zwart} is provided for complex subspaces. However, for  geometric control approach one needs to work with real subspaces. The following  corollary provides the necessary and sufficient conditions for equivalence of Definition \ref{Def:AInv}, items 1) and 2) for  regular RS systems and real subspaces.

\begin{corollary}\label{Col:A-T-Inv}
	Consider the regular RS system \eqref{Eq:GeneralID} and the $\op{A}$-invariant subspace $\ssp{V}$. The real subspace $\ssp{V}$ is $\SG{T}{A}$-invariant if and only if $\ssp{V}=\overline{\spanset{\ssp{E}_i}_{i\in\fld{I}_1}}$, where $\ssp{E}_i$'s denote the sub-eigenspaces of $\op{A}$  and $\fld{I}_1\subseteq\fld{N}$.
\end{corollary}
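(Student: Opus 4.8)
The plan is to reduce the corollary to Lemma \ref{Lem:EqualInvSpace_Zwart} by passing between the real subspace $\ssp{V}$ and its complexification $\ssp{V}_{\fld{C}}$. The first step is to bridge the two notions of $\SG{T}{A}$-invariance across the real/complex boundary. Since $\op{A}$ is a real operator, the semigroup $\SG{T}{A}(t)$ commutes with complex conjugation, so for $z=x+jy$ with $x,y$ real one has $\SG{T}{A}(t)z = \SG{T}{A}(t)x + j\SG{T}{A}(t)y$. I would use this to argue that the real subspace $\ssp{V}$ is $\SG{T}{A}$-invariant if and only if $\ssp{V}_{\fld{C}}$ is $\SG{T}{A}$-invariant in the complex sense: invariance of $\ssp{V}$ forces every complex combination to remain in $\ssp{V}_{\fld{C}}$, and conversely restricting to real $z$ recovers $\SG{T}{A}(t)\ssp{V}\subseteq\ssp{V}$. (Alternatively, since $\roi{A}$ is a real interval, one may invoke Lemma \ref{Lem:A-1_T-Inv} with a real $\lambda$, for which $(\lambda\op{I}-\op{A})^{-1}$ is a real operator, and carry out the same translation at the level of the resolvent.)

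The second step, for the necessity direction, is to realify the decomposition produced by Lemma \ref{Lem:EqualInvSpace_Zwart}. Applying that lemma to $\ssp{V}_{\fld{C}}$ yields $\ssp{V}_{\fld{C}} = \overline{\spanset{\ssp{D}_i}_{i\in\fld{I}_1}}$ with each $\ssp{D}_i\subseteq\op{P}_i\op{X}$ being $\op{A}$-invariant. Here I would exploit that $\ssp{V}_{\fld{C}}$ is conjugation-invariant (being a complexification): conjugation carries $\op{P}_i\op{X}$ onto $\op{P}_{\bar i}\op{X}$, where $\bar i$ indexes the conjugate eigenvalue, so conjugation invariance of $\ssp{V}_{\fld{C}}$ forces $\overline{\ssp{D}_i}=\ssp{D}_{\bar i}$. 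For a real eigenvalue $\bar i=i$, hence $\ssp{D}_i$ is conjugation-invariant and is the complexification of the real sub-eigenspace $\ssp{E}_i=\ssp{D}_i\cap\op{X}$. For a conjugate pair, $\ssp{D}_i$ and $\ssp{D}_{\bar i}=\overline{\ssp{D}_i}$ fuse into a single real sub-eigenspace $\ssp{E}_i\subseteq\ssp{P}_i$, namely the real part of $\ssp{D}_i\oplus\ssp{D}_{\bar i}$. Collecting these gives $\ssp{V}=\overline{\spanset{\ssp{E}_i}_{i\in\fld{I}_1}}$.

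For the sufficiency direction I would run the construction in reverse. Given $\ssp{V}=\overline{\spanset{\ssp{E}_i}_{i\in\fld{I}_1}}$ with real sub-eigenspaces $\ssp{E}_i\subseteq\ssp{P}_i$, complexification gives $\ssp{V}_{\fld{C}}=\overline{\spanset{(\ssp{E}_i)_{\fld{C}}}_{i\in\fld{I}_1}}$. For a real eigenvalue, $(\ssp{E}_i)_{\fld{C}}$ already lies in $\op{P}_i\op{X}$ and is $\op{A}$-invariant. For a conjugate pair, $(\ssp{E}_i)_{\fld{C}}\subseteq\op{P}_\lambda\op{X}\oplus\op{P}_{\overline\lambda}\op{X}$, and since $\op{A}$ acts with the two distinct eigenvalues $\lambda$ and $\overline\lambda$ on the two summands, the spectral projections restrict to polynomials in $\op{A}$ on this finite-dimensional invariant subspace; hence $(\ssp{E}_i)_{\fld{C}}$ splits as $\ssp{D}_\lambda\oplus\ssp{D}_{\overline\lambda}$ with each piece $\op{A}$-invariant inside the respective complex eigenspace. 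Thus $\ssp{V}_{\fld{C}}$ has exactly the form required by Lemma \ref{Lem:EqualInvSpace_Zwart}, so it is $\SG{T}{A}$-invariant, and by the first step $\ssp{V}$ is $\SG{T}{A}$-invariant.

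The main obstacle I anticipate lies in the second step: verifying carefully that the decomposition of Lemma \ref{Lem:EqualInvSpace_Zwart} respects conjugation so the complex pieces over conjugate eigenvalues can be fused into genuine real sub-eigenspaces, and that this fusion is compatible with the closure of the span. The splitting of an $\op{A}$-invariant subspace of $\op{P}_\lambda\op{X}\oplus\op{P}_{\overline\lambda}\op{X}$ along its spectral projections is the algebraic heart of the argument; the remaining bookkeeping (that complexification commutes with $\overline{\mr{span}}$ and that $\dim(\ssp{P}_i)=2n_i$ for complex $\lambda_i$) should be routine given the background already established.
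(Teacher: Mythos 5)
Your proposal is correct and follows essentially the same route as the paper's own proof: pass to the complexification $\ssp{V}_{\fld{C}}$, apply Lemma \ref{Lem:EqualInvSpace_Zwart}, and use the realness of $\op{A}$ and $\SG{T}{A}$ to translate back to the real subspace. You spell out the conjugation symmetry $\overline{\ssp{D}_i}=\ssp{D}_{\bar i}$ and the splitting of $(\ssp{E}_i)_{\fld{C}}$ over conjugate eigenvalue pairs in more detail than the paper does, but this only fills in bookkeeping the paper leaves implicit.
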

\begin{proof}
	Let $\phi^k=v_1^k+jv_2^k, k = 1,\cdots,n_i$ denote the corresponding (generalized) eigenvectors for the eigenvalue $\lambda_i = \gamma_1+j\gamma_2$ of $\op{A}$, where  $n_i$ denotes the algebraic multiplicity of $\lambda_i$, and  $\gamma_\ell$ and $v_\ell^k$ (for $\ell=1,2$) are real numbers and vectors, respectively. Since $\op{A}$ is a regular RS operator, it follows that the eigenspace corresponding to $\lambda_i$ (and its conjugate) is equal to $\spanset{v_1^k,v_2^k}_{k=1}^{n_i}$.\\
	({\bf If part}): Let $\ssp{V}=\overline{\spanset{\ssp{E}_i}_{i\in\fld{I}_1}}$. The corresponding complex subspace of $\ssp{V}$ (refer to the Notation description in Section \ref{Sec:Intro}) is then expressed by $\ssp{V}_\fld{C} = \overline{\spanset{\ssp{D}_i}_{i\in\fld{I}_1}}$, where $\ssp{D}_i$ (and its conjugate) is the corresponding complex subspace to $\ssp{E}_i$. Consequently,  $\ssp{V}_\fld{C}$ is $\op{A}$-invariant. By Lemma \ref{Lem:EqualInvSpace_Zwart}, $\ssp{V}_\fld{C}$ is $\SG{T}{A}$-invariant. Hence, $\SG{T}{A}(t)(v_1+jv_2)\in\ssp{V}_\fld{C}$, for all $v_1+jv_2\in\ssp{V}_\fld{C}$ and $t\ge 0$. Since $\op{A}$ and  $\SG{T}{A}$ are real, by referring to the definition of $\ssp{V}_\fld{C}$ we have $v_1,v_2\in\ssp{V}$ and $\SG{T}{A}(t)v_1, \SG{T}{A}(t)v_2\in\ssp{V}$ for all $t\ge 0$. Therefore, $\SG{T}{A}(t)\ssp{E}_i\subseteq\ssp{E}_i$ implying that $\ssp{V}$ is $\SG{T}{A}$-invariant.\\
	({\bf Only if part}): Let $\ssp{V}$ be $\SG{T}{A}$-invariant. The corresponding complex subspace $\ssp{V}_\fld{C}$  is also $\SG{T}{A}$-invariant. Again, by using Lemma \ref{Lem:EqualInvSpace_Zwart},  $\ssp{V}_\fld{C} = \overline{\spanset{\phi_i}_{i\in\fld{I}_1}}$. Therefore, $\ssp{V}=\overline{\spanset{\ssp{E}_i}_{i\in\fld{I}_1}}$.
	This completes the proof of the corollary.
\end{proof}	

In this work, we are mainly concerned with two important invariant subspaces of RS systems as discussed below. We denote the largest $\op{A}$- and  $\SG{T}{A}$-invariant subspaces that are contained in $\ssp{C}$ by $<\ssp{C}|\op{A}>$ and $<\ssp{C}|\SG{T}{A}>$, respectively. The $\op{A}$-unobservable subspace of the system \eqref{Eq:GeneralID} is defined by $\ssp{N}_\op{A} = <\ker\op{C}|\op{A}> = \bigcap_{n\in\underline{\fld{N}}}\ker\op{C}\op{A}^n$. Also, the unobservable subspace of the system \eqref{Eq:GeneralID} is defined by  $\ssp{N} = <\ker \op{C}|\SG{T}{A}> = \bigcap_{t\geq0}\ker \op{C}\SG{T}{A}(t)$ \cite{Curtain_Invariance_1986}. Note that $\ssp{N}_\op{A}\subseteq D(\op{A}^n)$ for all $n\in\fld{N}$ and is not necessarily $\SG{T}{A}$-invariant. However, as shown subsequently, by using this subspace one is enabled to develop an algorithm to compute the conditioned invariant subspaces in a {finite} number of steps. Moreover, these subspaces will be used in Section \ref{Sec:UnObse} to introduce the unobservability subspace of RS systems, where the following corollary plays a crucial role.
\begin{corollary}\label{Col:UnObs_Span}
	Consider the RS system \eqref{Eq:GeneralID}, where $\op{A}$ is a regular RS operator with a bounded output operator $\op{C}$. The unobservable subspace $\ssp{N}$ is the largest subspace contained in $\ker\op{C}$ that can be expressed as $\overline{\spanset{\ssp{E}_i}_{i\in\fld{I}}}$, where $\ssp{E}_i$'s are sub-eigenspaces of $\op{A}$ and $\fld{I}\subseteq\fld{N}$.
\end{corollary}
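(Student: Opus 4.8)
The plan is to recognize $\ssp{N}$ as the largest $\SG{T}{A}$-invariant subspace contained in $\ker\op{C}$ and then to invoke Corollary \ref{Col:A-T-Inv} to convert the $\SG{T}{A}$-invariance into the stated span-of-sub-eigenspaces representation. The whole argument splits into a characterization step and a maximality step, after which the corollary does the translation.

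First I would verify that $\ssp{N}=\bigcap_{t\geq 0}\ker\op{C}\SG{T}{A}(t)$ is a closed, real, $\SG{T}{A}$-invariant subspace contained in $\ker\op{C}$. Closedness is where the hypothesis that $\op{C}$ is bounded enters: each $\op{C}\SG{T}{A}(t)$ is then continuous, so $\ker\op{C}\SG{T}{A}(t)$ is closed, and $\ssp{N}$ is an intersection of closed subspaces. Containment in $\ker\op{C}$ is immediate by taking $t=0$ and using $\SG{T}{A}(0)=\op{I}$. For $\SG{T}{A}$-invariance I would use the semigroup property from the excerpt: if $v\in\ssp{N}$ then, for all $s,t\geq 0$, $\op{C}\SG{T}{A}(s)\bigl(\SG{T}{A}(t)v\bigr)=\op{C}\SG{T}{A}(s+t)v=0$, so $\SG{T}{A}(t)v\in\ssp{N}$, i.e. $\SG{T}{A}(t)\ssp{N}\subseteq\ssp{N}$.

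Next I would establish maximality among $\SG{T}{A}$-invariant subspaces: if $\ssp{W}$ is any $\SG{T}{A}$-invariant subspace with $\ssp{W}\subseteq\ker\op{C}$, then for $w\in\ssp{W}$ we have $\SG{T}{A}(t)w\in\ssp{W}\subseteq\ker\op{C}$, i.e. $\op{C}\SG{T}{A}(t)w=0$ for every $t\geq 0$, whence $w\in\ssp{N}$; thus $\ssp{W}\subseteq\ssp{N}$. So $\ssp{N}$ is the largest $\SG{T}{A}$-invariant subspace inside $\ker\op{C}$. Now I apply Corollary \ref{Col:A-T-Inv}. Since $\ssp{N}$ is $\SG{T}{A}$-invariant it is also $\op{A}$-invariant (as noted after Definition \ref{Def:AInv}), so the corollary yields $\ssp{N}=\overline{\spanset{\ssp{E}_i}_{i\in\fld{I}}}$ with the $\ssp{E}_i$ sub-eigenspaces of $\op{A}$; hence $\ssp{N}$ lies in the required family and is contained in $\ker\op{C}$. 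Conversely, any $\ssp{W}=\overline{\spanset{\ssp{E}_i}_{i\in\fld{I}}}$ is $\SG{T}{A}$-invariant by the ``if'' direction of the same corollary, so if additionally $\ssp{W}\subseteq\ker\op{C}$ then the maximality just proved gives $\ssp{W}\subseteq\ssp{N}$. These two facts together say exactly that $\ssp{N}$ is the largest subspace contained in $\ker\op{C}$ expressible as $\overline{\spanset{\ssp{E}_i}_{i\in\fld{I}}}$, which is the claim.

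The only genuinely substantive point is that the class of subspaces of the form $\overline{\spanset{\ssp{E}_i}_{i\in\fld{I}}}$ coincides precisely with the class of real, closed, $\SG{T}{A}$-invariant subspaces, and this is already delivered by Corollary \ref{Col:A-T-Inv}; everything else is a routine maximality bookkeeping argument. The two mild cautions I would keep in mind are to carry out the whole discussion over $\fld{R}$ so that the real-subspace hypothesis of Corollary \ref{Col:A-T-Inv} is met, and to make explicit that boundedness of $\op{C}$ is what guarantees $\ssp{N}$ is closed, since $\ssp{N}$ must be a closed subspace for the corollary to apply.
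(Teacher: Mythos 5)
Your proposal is correct and follows essentially the same route as the paper: identify $\ssp{N}$ as the largest $\SG{T}{A}$-invariant subspace contained in $\ker\op{C}$ and then apply Corollary \ref{Col:A-T-Inv} in both directions to translate $\SG{T}{A}$-invariance into the span-of-sub-eigenspaces form. The only difference is that you prove the maximality and closedness of $\ssp{N}$ directly from the semigroup property and the boundedness of $\op{C}$, whereas the paper simply cites these facts from the literature.
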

\begin{proof}
	As stated above, $\ssp{N}$ is $\SG{T}{A}$-invariant, and consequently by using Corollary \ref{Col:A-T-Inv}, $\ssp{N} = \overline{\spanset{\ssp{E}_i}_{i\in\fld{I}}}$. Moreover, since $\ssp{N}$ is the largest $\SG{T}{A}$-invariant that is contained in $\ker\op{C}$ \cite{Curtain_Invariance_1986}, the result follows readily. This completes the proof of the corollary.
\end{proof}
\subsection{Conditioned Invariant Subspaces}
In this subsection, the conditioned invariant subspaces of the system \eqref{Eq:GeneralID} are defined and characterized. Not surprisingly, various definitions, that are all equivalent in  Fin-D systems, are available for conditioned invariant subspaces of Inf-D systems that are \underline{not}  equivalent to one another \cite{Curtain_Invariance_1986}. This subsection mainly concentrates on deriving necessary and sufficient conditions where these definitions are shown to be equivalent.  Let us first define the notion of conditioned invariant subspace.
\begin{definition}\label{Def:ConditionInv}\cite{Curtain_Invariance_1986}
	\begin{enumerate}
		\item  The closed subspace $\ssp{W}\subseteq\op{X}$ is designated as \CA-invariant if $\op{A}(\ssp{W}\cap D(\op{A})\cap\ker \op{C})\subseteq \ssp{W}$.
		\item The closed subspace $\ssp{W}\subseteq\op{X}$ is feedback \CA-invariant if there exists a bounded operator $\op{D}:\fld{R}^q\rightarrow\op{X}$ such that $\ssp{W}$ is invariant with respect to $(\op{A+DC})$, as per Definition \ref{Def:AInv}, item 1).
		\item The closed subspace $\ssp{W}\subseteq\op{X}$ is $\fld{T}$-conditioned invariant if there exists a bounded operator $\op{D}:\fld{R}^q\rightarrow\op{X}$ such that (i) the operator $(\op{A+DC})$ is the infinitesimal generator of a $C_0$-semigroup $\fld{T}_\op{A+DC}$; and (ii) $\ssp{W}$ is invariant with respect to $\fld{T}_\op{A+DC}$, as per Definition \ref{Def:AInv}, item 2).
	\end{enumerate}
\end{definition}
It should be pointed out that in the literature $\fld{T}$-conditioned invariant is also called $\fld{T}(\op{C},\op{A})$-invariant \cite{Curtain_Invariance_1986}. It can be shown that Definition \ref{Def:ConditionInv}, item 3) $\Rightarrow$ item 2) $\Rightarrow$ item 1) \cite{Curtain_Invariance_1986}. %For the FDI purpose, we are interested in the cases in which Definitions \ref{Def:ConditionInv}.1-\ref{Def:ConditionInv}.3 coincide.
A sufficient condition for equivalence of the above definitions is developed in \cite{Curtain_Invariance_1986}.
\begin{lemma}\label{Lem:CAInv}\cite{Curtain_Invariance_1986}
	A given \CA-invariant subspace $\ssp{W}$ is $\fld{T}$-conditioned invariant, if $\op{C}\ssp{W}$ is closed and $\ssp{W}\subseteq D(\op{A})$.
\end{lemma}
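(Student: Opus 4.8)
The plan is to prove the implication ``item 1 $+$ the two hypotheses $\Rightarrow$ item 3'' of Definition \ref{Def:ConditionInv} by explicitly constructing a bounded feedback operator $\op{D}$ that renders $\ssp{W}$ invariant under $(\op{A+DC})$, and then upgrading this $(\op{A+DC})$-invariance to $\SG{T}{A+DC}$-invariance using the bounded perturbation theorem together with the domain-based sufficient condition recorded just before Lemma \ref{Lem:A-1_T-Inv}. First I would record that $\op{A}|_\ssp{W}:\ssp{W}\to\op{X}$ is bounded: since $\op{A}$ is closed and both $\ssp{W}$ and $\ssp{W}\subseteq\dom{A}$ are closed, the restriction has a closed graph, so the closed graph theorem applies.

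Next I would build $\op{D}$. Let $\pi:\op{X}\to\op{X}/\ssp{W}$ be the canonical bounded quotient map, and define a map $F$ on $\op{C}\ssp{W}$ by $F(\op{C}w)=-\pi\op{A}w$ for $w\in\ssp{W}$. Well-definedness is exactly where the \CA-invariance hypothesis is used: if $\op{C}w_1=\op{C}w_2$ with $w_1,w_2\in\ssp{W}$, then $w_1-w_2\in\ssp{W}\cap\dom{A}\cap\ker\op{C}$, so $\op{A}(w_1-w_2)\in\ssp{W}$ and hence $\pi\op{A}w_1=\pi\op{A}w_2$. Boundedness of $F$ follows from the boundedness of $\op{A}|_\ssp{W}$ together with the open mapping theorem applied to the surjection $\op{C}|_\ssp{W}:\ssp{W}\to\op{C}\ssp{W}$, which requires $\op{C}\ssp{W}$ to be closed so that it is a Banach space. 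Because $\op{C}\ssp{W}$ is closed, I would then extend $F$ to a bounded map on the whole output space (extending by zero on a closed complement of $\op{C}\ssp{W}$) and lift it through $\pi$ to a bounded operator $\op{D}:\fld{R}^q\to\op{X}$ satisfying $\pi\op{D}\op{C}=-\pi\op{A}$ on $\ssp{W}$. A short verification then shows that for any $w\in\ssp{W}$, choosing $w'\in\ssp{W}$ with $\op{C}w'=\op{C}w$ gives $(\op{A+DC})w=\op{A}(w-w')\in\ssp{W}$ by \CA-invariance; hence $(\op{A+DC})\ssp{W}\subseteq\ssp{W}$, i.e. $\ssp{W}$ is feedback \CA-invariant (item 2).

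Finally, to pass from item 2 to item 3 I would observe that $\op{DC}$ is bounded, so the bounded perturbation theorem guarantees that $(\op{A+DC})$ generates a $C_0$-semigroup $\SG{T}{A+DC}$, which settles condition (i). Moreover, boundedness of $\op{DC}$ gives $D(\op{A+DC})=\dom{A}\supseteq\ssp{W}$, so applying the sufficient condition stated before Lemma \ref{Lem:A-1_T-Inv} (an operator-invariant subspace contained in the generator's domain is automatically semigroup-invariant) to $(\op{A+DC})$ shows that $\ssp{W}$ is $\SG{T}{A+DC}$-invariant, settling condition (ii). Together these establish that $\ssp{W}$ is $\fld{T}$-conditioned invariant.

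I expect the boundedness of $\op{D}$ to be the main obstacle, and it is precisely here that both hypotheses are indispensable: $\ssp{W}\subseteq\dom{A}$ yields (via the closed graph theorem) the boundedness of $\op{A}|_\ssp{W}$ needed to bound $F$, while the closedness of $\op{C}\ssp{W}$ provides the Banach-space structure needed to invoke the open mapping theorem and to extend $F$ boundedly. I note that in the present finite-rank setting the output lives in $\fld{R}^q$, so $\op{C}\ssp{W}$ is automatically closed and the extension step is trivial; the domain condition, however, remains essential because $\op{A}$ is unbounded.
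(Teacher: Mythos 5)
The paper states this lemma without proof, quoting it directly from \cite{Curtain_Invariance_1986}, so there is no in-paper argument to compare against; judged on its own merits, your reconstruction is correct. The two hypotheses are used exactly where they must be: $\ssp{W}\subseteq\dom{A}$ together with closedness of $\op{A}$ gives boundedness of $\op{A}|_{\ssp{W}}$ via the closed graph theorem; closedness of $\op{C}\ssp{W}$ makes $\op{C}|_{\ssp{W}}:\ssp{W}\to\op{C}\ssp{W}$ a bounded surjection onto a Banach space, so the open mapping theorem bounds the induced map $F(\op{C}w)=-\pi\op{A}w$, which is well defined precisely because of \CA-invariance; and the final passage from $(\op{A+DC})$-invariance to $\SG{T}{A+DC}$-invariance correctly combines the bounded perturbation theorem with the sufficient condition recorded in the paper just before Lemma~\ref{Lem:A-1_T-Inv}, applied to $\op{A+DC}$ whose domain equals $\dom{A}\supseteq\ssp{W}$. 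Your construction of $\op{D}$ through the quotient $\op{X}/\ssp{W}$ is in the same spirit as, but more general than, the one the paper carries out later in Lemma~\ref{Lem:CA_equalency}, where finite-rankness of $\op{C}$ lets the authors work with an explicit finite set of basis vectors instead of the open mapping theorem. One small imprecision: the identity $(\op{A+DC})w=\op{A}(w-w')$ does not hold literally, since your lift only guarantees $\pi\op{D}\op{C}w=-\pi\op{A}w$, i.e.\ $\op{D}\op{C}w=-\op{A}w$ modulo $\ssp{W}$; the clean verification is $\pi\bigl((\op{A+DC})w\bigr)=\pi\op{A}w+F(\op{C}w)=0$, which yields $(\op{A+DC})w\in\ssp{W}$ directly. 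This does not affect the validity of the argument.
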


In this subsection, we show that Definition \ref{Def:ConditionInv}, item 1) and item 2) are equivalent for the system \eqref{Eq:GeneralID}, when the finite rank output operator is represented by \eqref{Eq:OuputOperator} (even if $\ssp{W}\not\subset\dom{A}$). Moreover, we derive necessary and sufficient conditions for $\fld{T}$-conditioned invariance. These results enable one to subsequently derive the necessary  and sufficient conditions for solvability of the FDI problem. Towards this end, we first need the following lemma.
\begin{lemma}\label{Lem:SumClosedSpace}
	Consider the closed subspace $\ssp{V}= \overline{\spanset{x_i}_{i\in\fld{I}}}$, where $x_i\in\op{X}$ (and not necessarily orthogonal) and $\fld{I}\subseteq\fld{N}$. Then
	\begin{equation}
	\begin{split}
	\ssp{V} &= \overline{\ssp{V}_{\mathrm{inf}}+\ssp{V}_\mathrm{f}} = \ssp{V}_{\mathrm{inf}}+\ssp{V}_\mathrm{f},\;
	\end{split}
	\end{equation}
	where $\ssp{V}_\mathrm{f} = \spanset{x_i}_{i\in \fld{J}}$, $\ssp{V}_{\mathrm{inf}} = \overline{\spanset{x_i}_{i\in\fld{I}-\fld{J}}}$ and $\fld{J}$ is a finite subset of $\fld{I}$.
\end{lemma}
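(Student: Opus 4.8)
The plan is to prove the two asserted equalities separately. The first, $\ssp{V} = \overline{\ssp{V}_{\mathrm{inf}} + \ssp{V}_{\mathrm{f}}}$, is a routine manipulation of spans and closures, whereas the second, $\overline{\ssp{V}_{\mathrm{inf}} + \ssp{V}_{\mathrm{f}}} = \ssp{V}_{\mathrm{inf}} + \ssp{V}_{\mathrm{f}}$, carries the substantive content: it asserts that the algebraic sum is already closed, with no further closure needed. I expect this closedness to be the main obstacle, and it is precisely here that the finiteness of $\fld{J}$ (equivalently, the finite dimensionality of $\ssp{V}_{\mathrm{f}}$) is indispensable, since in general the sum of two closed subspaces of a Hilbert space need not be closed.

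For the first equality I would begin from the purely algebraic decomposition $\spanset{x_i}_{i\in\fld{I}} = \ssp{V}_{\mathrm{f}} + \spanset{x_i}_{i\in\fld{I}-\fld{J}}$, which is immediate from $\fld{J}\subseteq\fld{I}$, and take closures to obtain $\ssp{V} = \overline{\ssp{V}_{\mathrm{f}} + \spanset{x_i}_{i\in\fld{I}-\fld{J}}}$. It then remains to replace $\spanset{x_i}_{i\in\fld{I}-\fld{J}}$ by its closure $\ssp{V}_{\mathrm{inf}}$ inside the outer closure. One inclusion follows from monotonicity of closure; for the reverse, I would take an arbitrary $f+w$ with $f\in\ssp{V}_{\mathrm{f}}$ and $w\in\ssp{V}_{\mathrm{inf}}$, approximate $w$ by a sequence $w_n\in\spanset{x_i}_{i\in\fld{I}-\fld{J}}$, and note that $f+w_n\to f+w$, so $f+w\in\overline{\ssp{V}_{\mathrm{f}} + \spanset{x_i}_{i\in\fld{I}-\fld{J}}}$. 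Taking closures then yields $\overline{\ssp{V}_{\mathrm{f}} + \ssp{V}_{\mathrm{inf}}} = \overline{\ssp{V}_{\mathrm{f}} + \spanset{x_i}_{i\in\fld{I}-\fld{J}}} = \ssp{V}$.

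For the second equality I would invoke the classical fact that the sum of a closed subspace and a finite-dimensional subspace of a Banach space is closed, and establish it through the quotient construction. Since $\ssp{V}_{\mathrm{inf}}$ is closed by definition, the quotient $\op{X}/\ssp{V}_{\mathrm{inf}}$ is a Banach space and the canonical projection $\pi:\op{X}\to\op{X}/\ssp{V}_{\mathrm{inf}}$ is continuous. The image $\pi(\ssp{V}_{\mathrm{f}})$ is a finite-dimensional subspace of $\op{X}/\ssp{V}_{\mathrm{inf}}$, hence closed, because every finite-dimensional subspace of a normed space is closed. A direct set computation shows $\ssp{V}_{\mathrm{inf}} + \ssp{V}_{\mathrm{f}} = \pi^{-1}(\pi(\ssp{V}_{\mathrm{f}}))$, and the preimage of a closed set under a continuous map is closed. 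Therefore $\ssp{V}_{\mathrm{inf}} + \ssp{V}_{\mathrm{f}}$ is closed, i.e. equal to its own closure, which is exactly the second equality.

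The hard part is thus the closedness step; the first equality and the identity $\ssp{V}_{\mathrm{inf}} + \ssp{V}_{\mathrm{f}} = \pi^{-1}(\pi(\ssp{V}_{\mathrm{f}}))$ are routine verifications. An alternative, more computational route in the Hilbert space setting would split $\ssp{V}_{\mathrm{f}}$ into its part lying in $\ssp{V}_{\mathrm{inf}}$ together with a finite orthogonal complement and argue by induction on $\dim\ssp{V}_{\mathrm{f}}$, reducing to the rank-one case; but the quotient argument is cleaner and avoids the case analysis. Either way, the essential and irreplaceable input is the finite dimensionality of $\ssp{V}_{\mathrm{f}}$, which is what guarantees that $\pi(\ssp{V}_{\mathrm{f}})$ is closed in the quotient.
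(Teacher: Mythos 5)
Your proof is correct and follows essentially the same route as the paper's: both arguments reduce the lemma to (i) density of $\ssp{V}_{\mathrm{inf}}+\ssp{V}_\mathrm{f}$ in $\ssp{V}$ and (ii) the fact that the sum of a closed subspace and a finite-dimensional subspace is closed. The only difference is that the paper cites (ii) from a reference (Proposition 1.7.17 of its Banach-space text), whereas you prove it yourself via the quotient map $\pi:\op{X}\rightarrow\op{X}/\ssp{V}_{\mathrm{inf}}$ and the identity $\ssp{V}_{\mathrm{inf}}+\ssp{V}_\mathrm{f}=\pi^{-1}(\pi(\ssp{V}_\mathrm{f}))$, which makes the argument self-contained.
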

\begin{proof}
	It follows readily that $\spanset{x_i}_{i\in\fld{I}-\fld{J}}+\ssp{V}_\text{f}$ is dense in $\ssp{V}$. Hence, the subspace ${\ssp{V}_{\text{inf}}}+\ssp{V}_\text{f}$ is also dense in $\ssp{V}$. Furthermore, since $\ssp{V}_\text{f}$ is a Fin-D subspace, it is a closed subspace. Therefore, by using the Proposition 1.7.17 in \cite{BanachTheory_Book} (which states that the sum of two closed subspaces is also closed if at least one of them is Fin-D), it follows that ${\ssp{V}_{\text{inf}}}+\ssp{V}_\text{f}$ is closed. Since, ${\ssp{V}_{\text{inf}}}+\ssp{V}_\text{f}$ is closed and dense in $\ssp{V}$, we have ${\ssp{V}_{\text{inf}}}+\ssp{V}_\text{f}=\ssp{V}$. This completes the proof of the lemma.
\end{proof}
The following lemma shows the equivalence of \CA- and feedback \CA-invariance properties for a general Inf-D system provided that the output operator is a finite rank operator (as considered to be satisfied by the model \eqref{Eq:OuputOperator} in this paper).
\begin{lemma}\label{Lem:CA_equalency}
	Consider the Inf-D system \eqref{Eq:GeneralID}, where $\op{A}$ is the infinitesimal generator of a $C_0$ semigroup (more general than the regular RS operator) and the finite rank output operator is  given by \eqref{Eq:OuputOperator}. Let $\ssp{W}\subseteq\op{X}$ be a closed subspace such that $\overline{\dom{A}\cap\ssp{W}}=\ssp{W}$. The subspace $\ssp{W}$ is \CA-invariant if and only if it is feedback \CA-invariant.
\end{lemma}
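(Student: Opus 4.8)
The plan is to prove the two implications separately, with the finite‑rank structure of $\op{C}$ doing all the work in the nontrivial direction. The easy implication (feedback \CA-invariant $\Rightarrow$ \CA-invariant) is essentially Definition \ref{Def:ConditionInv}, item 2) $\Rightarrow$ item 1), already recorded in the excerpt: if $\op{D}$ renders $\ssp{W}$ invariant under $\op{A+DC}$ and $x\in\ssp{W}\cap\dom{A}\cap\ker\op{C}$, then $\op{C}x=0$ gives $(\op{A+DC})x=\op{A}x$, while $x\in\ssp{W}\cap\dom{A+DC}$ forces $(\op{A+DC})x\in\ssp{W}$, whence $\op{A}x\in\ssp{W}$. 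So I would dispatch this direction in a single line and concentrate on the converse.

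For the converse (\CA-invariant $\Rightarrow$ feedback \CA-invariant) I would construct the feedback operator $\op{D}$ explicitly. The key observation is that, since $\op{C}$ maps into $\fld{R}^q$ with finite rank, the image $\ssp{S}:=\op{C}(\ssp{W}\cap\dom{A})$ is a subspace of $\fld{R}^q$ and hence finite dimensional, in particular closed; combining the hypothesis $\overline{\dom{A}\cap\ssp{W}}=\ssp{W}$ with continuity of $\op{C}$ also gives $\op{C}\ssp{W}=\ssp{S}$, so nothing is lost by working inside $\ssp{W}\cap\dom{A}$. I would then pick a basis $s_1,\dots,s_p$ of $\ssp{S}$ and, for each $j$, a preimage $w_j\in\ssp{W}\cap\dom{A}$ with $\op{C}w_j=s_j$, and define a linear map $\op{D}_0:\ssp{S}\rightarrow\op{X}$ by $\op{D}_0 s_j=-\op{A}w_j$ (here $\op{A}w_j$ is well defined since $w_j\in\dom{A}$).

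The heart of the argument is verifying that this choice actually makes $\ssp{W}$ invariant, and this is exactly where the \CA-invariance hypothesis enters, as the consistency condition for $\op{D}_0$. Given any $w\in\ssp{W}\cap\dom{A}$, write $\op{C}w=\sum_j\alpha_j s_j$ and set $w'=\sum_j\alpha_j w_j\in\ssp{W}\cap\dom{A}$; then $\op{C}(w-w')=0$, so $w-w'\in\ssp{W}\cap\dom{A}\cap\ker\op{C}$ and \CA-invariance yields $\op{A}(w-w')\in\ssp{W}$. Since $\op{A}w'=\sum_j\alpha_j\op{A}w_j=-\op{D}_0(\op{C}w)$, this says precisely $\op{A}w+\op{D}_0(\op{C}w)\in\ssp{W}$. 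I would then extend $\op{D}_0$ to $\op{D}:\fld{R}^q\rightarrow\op{X}$ (say by composing with a linear projection of $\fld{R}^q$ onto $\ssp{S}$); $\op{D}$ is automatically bounded because its domain is finite dimensional, $\op{D}\op{C}$ is bounded, and $\dom{A+DC}=\dom{A}$. Finally, for every $w\in\ssp{W}\cap\dom{A+DC}$ one has $\op{C}w\in\ssp{S}$, so $\op{D}(\op{C}w)=\op{D}_0(\op{C}w)$ and $(\op{A+DC})w=\op{A}w+\op{D}_0(\op{C}w)\in\ssp{W}$, which is invariance of $\ssp{W}$ under $\op{A+DC}$ in the sense of Definition \ref{Def:AInv}, item 1). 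Hence $\ssp{W}$ is feedback \CA-invariant.

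I expect the \textbf{main obstacle} to be the well-definedness and boundedness of $\op{D}$ rather than the algebra. In infinite dimensions a densely defined linear relation cannot in general be extended to a bounded operator, and it is exactly the finite-rankness of $\op{C}$ (which forces $\ssp{S}$ to be finite dimensional) that removes this difficulty: every linear map out of a finite-dimensional space is bounded, so no continuity estimate is ever needed. The \CA-invariance hypothesis is used in precisely one place, to guarantee that the assignment $\op{C}w\mapsto-\op{A}w$, read modulo $\ssp{W}$, is single-valued; without it the definition of $\op{D}_0$ would be inconsistent. Note that, in contrast to Lemma \ref{Lem:CAInv}, this construction requires neither $\ssp{W}\subseteq\dom{A}$ nor a separate closedness assumption on $\op{C}\ssp{W}$, because closedness of $\ssp{S}$ in $\fld{R}^q$ is automatic.
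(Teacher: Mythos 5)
Your proposal is correct and follows essentially the same route as the paper's proof: both exploit the finite rank of $\op{C}$ to define the output injection $\op{D}$ on a finite-dimensional space (so boundedness is automatic), by choosing finitely many vectors $w_j\in\ssp{W}\cap\dom{A}$ whose images under $\op{C}$ span $\op{C}\ssp{W}$ and setting $\op{D}\op{C}w_j=-\op{A}w_j$. The only difference is organizational: the paper first decomposes $\ssp{W}$ as $\ssp{W}\cap\ker\op{C}$ plus a finite-dimensional complement $\ssp{W}_\mr{f}\subset\dom{A}$ and arranges $(\op{A+DC})\ssp{W}_\mr{f}=0$ exactly, whereas you work directly with the image $\op{C}(\ssp{W}\cap\dom{A})$ and verify invariance modulo $\ssp{W}$ via the subtraction $w-w'$, which is a slightly cleaner bookkeeping of the same idea.
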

\begin{proof}
	As pointed out earlier, every feedback \CA-invariant subspace is \CA-invariant. Therefore, we only show the converse.
	By definition, we have $\op{A}(\ssp{W}\cap\ker\op{C}\cap D(\op{A}))\subseteq \ssp{W}$. Since $\overline{\ssp{W}\cap\dom{A}} = \ssp{W}$, and $\ssp{W}$ is separable ($\ssp{W}$ is a closed subspace of the separable Hilbert space $\op{X}$), there exists a basis $\{w_i\}_{i\in\fld{I}}$ for $\ssp{W}$ such that $w_i\in\dom{A}$. Let us rearrange the basis $\{w_i\}_{i\in\fld{I}}$ such that the first $n_\mr{f}$ vectors construct the Fin-D subspace $\ssp{W}_\mr{f}=\spanset{w_i}_{i=1}^{n_\mr{f}}\subset\dom{A}$, where $\ssp{W}_\mr{f}\cap\ker\op{C}=0$ and $n_\mr{f}=\dim(\ssp{W}\cap(\ssp{W}\cap\ker\op{C})^\perp)$. It should be pointed out that from \eqref{Eq:OuputOperator} (i.e. the finite rankness of $\op{C}$) and the fact that $\ssp{W}_\mr{f}\cap\ker\op{C}=0$, it follows that $\dim(\ssp{W}_\text{f})=n_\mr{f}\leq q<\infty$. Note that if $n_\mr{f}=0$ it implies that $\ssp{W}\subseteq\ker\op{C}$, and therefore it is $\op{A}$-invariant and by setting $\op{D}=0$ it is also feedback \CA-invariant. Now, without loss of any generality we assume that $w_i\in\ker\op{C}$ for all $i>n_\mr{f}$ (if $w_i\notin\ker\op{C}$, one can remove the projection of $w_i$ on $\ssp{W}_\mr{f}$ and call it as $w_i^n\in\ker\op{C}$. Since $\ssp{W}_\mr{f}\subset\dom{A}$, it follows that $w_i^n\in\dom{A}$). Given that $\dim(\ssp{W}_\mr{f})<\infty$, now by using Lemma \ref{Lem:SumClosedSpace} one obtains $\ssp{W} = \ssp{W}_{\mr{inf}}+\ssp{W}_\mr{f}$, where $\ssp{W}_\mr{inf} = \ssp{W}\cap\ker\op{C}=\overline{\spanset{w_i}_{i>n_\mr{f}}}$.
	
%Given that $\dom{A}\cap\ssp{W}$ is dense in $\ssp{W}$ and $\ker\op{C}$, we obtain $\overline{\ssp{W)}_\mr{inf}\cap\dom{A}}=\ssp{W}_\mr{inf}$, and consequently $\ssp{W}_f\subset\dom{A}$. Also, one can choose the basis of $\ssp{W}$, $\{w_i\}_{i\in\fld{I}}$ such that $w_i\in\dom{A}$. Now, assume that $\{w_1,\cdots,w_{n_\mr{f}}\}$ are the first  basis vector such that $\spanset{w_i}_{i=1}^{n_\mr{f}}\cap\ker\op{C}=0$ and $n_\mr{f}=\dim(\ssp{W}\cap(\ssp{W}\cap\ker\op{C})^\perp)$. Set $\ssp{W}_\mr{f}=\spanset{w_i}_{i=1}^{n_\mr{f}}$, and it follows that $\ssp{W}_f\subset\dom{A}$.
	
	 We now show how one can construct a bounded operator $\op{D}$ such that $(\op{A+DC})(\ssp{W}\cap\dom{A})\subset\ssp{W}$. Let $\op{A}w_i = x_i\in\op{X}$, $i=1,\cdots,n_\mr{f}$. We construct $\op{D}$ such that $\op{D} \op{C}[w_1,\cdots,w_{n_\mr{f}}] = -[x_1,\cdots,x_{n_\mr{f}}]$. Note that $\ssp{W}_\text{f}\cap \ker \op{C} = 0$, $\dim(\ssp{W}_\text{f})<\infty$, and $\op{C}$ is a bounded operator. It follows that $\op{C}$ is an invertible operator from $\ssp{W}_\text{f}$ onto $\ssp{Y} = \op{C}\ssp{W}_\text{f}\subseteq\fld{R}^q$. In other words, $C_w=\op{C}|_{\ssp{W}_\mathrm{f}}:\ssp{W}_\text{f}\rightarrow\ssp{Y}$ is a bijective map. Therefore, $C_w = \op{C}[w_1,\cdots,w_{n_\text{f}}]$ is a monic matrix (i.e., $\ker C_w = 0$), and consequently always there is a solution for  $D_w:\ssp{Y}\rightarrow\op{X}_\mathrm{f}$, such that  $D_w C_w = -[x_1,\cdots,x_{n_\mr{f}}]$, where $\op{X}_\mr{f}=\spanset{x_i}_{i=1}^{n_\mr{f}}$. A solution to $\op{D}:\fld{R}^q\rightarrow\op{X}$ is an extension of $D_w$ as $\op{D}y = \op{Q}D_wy_\text{1}$, where $y\in\fld{R}^q$, $y=y_{\text{1}}+y_\text{2}$, $y_\text{1}\in\ssp{Y}$, $y_{2}\in\ssp{Y}^\perp$ and $\op{Q}$ is the embedding operator from $\op{X}_\mr{f}$ to $\op{X}$. Since $\ssp{Y}$ is Fin-D, it follows that $\op{D}$ is bounded. Now, set $x\in(\ssp{W}\cap\dom{A})$. Since $\ssp{W}_\mr{f}\subset\dom{A}$, one can write $x=x_\mr{inf}+x_\mr{f}$, where $x_\mr{inf}\in(\ssp{W}_\mr{inf}\cap\dom{A})$ and $x_\mr{f}\in\ssp{W}_\mr{f}$. Given that $\ssp{W}$ is \CA-invariant, it follows that $(\op{A+DC})x_\mr{inf} = \op{A}x_\mr{inf}\in\ssp{W}$, and by definition of $\op{D}$, we obtain $(\op{A+DC})x_\mr{f} = 0$. Therefore, $(\op{A+DC})x\in\ssp{W}$, and consequently $\ssp{W}$ is a feedback \CA-invariant subspace. This completes the proof of the lemma.
\end{proof}
As shown in \cite{Zwart_Book} the $\fld{T}$-conditioned invariance and \CA-invariance  are not generally equivalent. Moreover, if $\op{C}$ is not finite rank the feedback \CA-invariance and \CA-invariance are not equivalent \cite{Zwart_Book,Curtain_Invariance_1986}. However, Lemma \ref{Lem:CA_equalency} shows the equivalence between the feedback \CA-invariance and \CA-invariance in the sense of Definition 2, if the output operator $\op{C}$ is finite rank and $\overline{\ssp{W}\cap\dom{A}}=\ssp{W}$.

The following lemma shows that  the $\fld{T}$-conditioned invariance is an independent property from the bounded operator $\op{D}$. This result allows one to derive  necessary and sufficient conditions for the $\fld{T}$-conditioned invariance.
\begin{lemma}\label{Lem:DiffOp4CondInv}
	Consider a $\fld{T}$-conditioned invariant subspace $\ssp{W}$ such that $\fld{T}_{\op{A+D}_1\op{C}}\ssp{W}\subseteq\ssp{W}$, and consider a bounded operator $\op{D}_2$ such that $(\op{A+D}_2\op{C})(\ssp{W}\cap\dom{A})\subseteq\ssp{W}$. Then $\fld{T}_{\op{A+D}_2\op{C}}\ssp{W}\subseteq\ssp{W}$.
\end{lemma}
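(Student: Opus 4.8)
The plan is to reduce everything to the resolvent characterisation of $\fld{T}$-invariance in Lemma~\ref{Lem:A-1_T-Inv}, exploiting that $\op{A}+\op{D}_2\op{C}$ differs from the generator $\op{A}+\op{D}_1\op{C}$ only by the \emph{bounded} operator $(\op{D}_2-\op{D}_1)\op{C}$. I would write $\op{G}_1=\op{A}+\op{D}_1\op{C}$ and $\op{D}:=\op{D}_2-\op{D}_1$ (bounded), so that $\op{A}+\op{D}_2\op{C}=\op{G}_1+\op{D}\op{C}$ with $\op{D}\op{C}$ bounded. Since a bounded perturbation of a $C_0$ generator is again a $C_0$ generator, $\op{A}+\op{D}_2\op{C}$ indeed generates the semigroup $\fld{T}_{\op{A}+\op{D}_2\op{C}}$, and $D(\op{A}+\op{D}_2\op{C})=\dom{A}=D(\op{G}_1)$ because $\op{D}_2\op{C}$ and $\op{D}_1\op{C}$ are bounded.

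The key first step is to show that $\op{D}\op{C}$ leaves $\ssp{W}$ invariant. Because $\ssp{W}$ is $\fld{T}_{\op{G}_1}$-invariant, the restriction $\fld{T}_{\op{G}_1}(t)|_{\ssp{W}}$ is a $C_0$ semigroup on $\ssp{W}$ whose generator has the dense domain $\ssp{W}\cap D(\op{G}_1)=\ssp{W}\cap\dom{A}$; hence $\overline{\ssp{W}\cap\dom{A}}=\ssp{W}$. For $w\in\ssp{W}\cap\dom{A}$, differentiating $t\mapsto\fld{T}_{\op{G}_1}(t)w$ at $t=0^+$ and using that $\ssp{W}$ is closed gives $\op{G}_1 w\in\ssp{W}$. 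Subtracting this from the hypothesis $(\op{A}+\op{D}_2\op{C})w\in\ssp{W}$ yields $\op{D}\op{C}\,w=(\op{A}+\op{D}_2\op{C})w-\op{G}_1 w\in\ssp{W}$ for every $w\in\ssp{W}\cap\dom{A}$. Since $\op{D}\op{C}$ is bounded and $\ssp{W}\cap\dom{A}$ is dense in the closed subspace $\ssp{W}$, a continuity argument then gives $\op{D}\op{C}\,\ssp{W}\subseteq\ssp{W}$.

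Next I would pass to resolvents. By Lemma~\ref{Lem:A-1_T-Inv}, $\fld{T}_{\op{G}_1}$-invariance of $\ssp{W}$ is equivalent to $(\lambda\op{I}-\op{G}_1)^{-1}\ssp{W}\subseteq\ssp{W}$ for $\lambda\in\rho_\infty(\op{G}_1)$. For real $\lambda$ large enough, $\|(\lambda\op{I}-\op{G}_1)^{-1}\op{D}\op{C}\|<1$, and the factorisation $\lambda\op{I}-(\op{A}+\op{D}_2\op{C})=(\lambda\op{I}-\op{G}_1)\bigl(\op{I}-(\lambda\op{I}-\op{G}_1)^{-1}\op{D}\op{C}\bigr)$ gives the norm-convergent Neumann expansion
\begin{equation*}
(\lambda\op{I}-(\op{A}+\op{D}_2\op{C}))^{-1}=\sum_{n=0}^{\infty}\bigl[(\lambda\op{I}-\op{G}_1)^{-1}\op{D}\op{C}\bigr]^{n}(\lambda\op{I}-\op{G}_1)^{-1}.
\end{equation*}
Each factor $(\lambda\op{I}-\op{G}_1)^{-1}$ maps $\ssp{W}$ into $\ssp{W}$ (resolvent invariance), and so does $\op{D}\op{C}$ by the first step; hence every partial sum maps $\ssp{W}$ into $\ssp{W}$, and since $\ssp{W}$ is closed and the series converges in operator norm, $(\lambda\op{I}-(\op{A}+\op{D}_2\op{C}))^{-1}\ssp{W}\subseteq\ssp{W}$ for all sufficiently large real $\lambda$. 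Applying Lemma~\ref{Lem:A-1_T-Inv} once more, now to the generator $\op{A}+\op{D}_2\op{C}$, this resolvent invariance is equivalent to $\fld{T}_{\op{A}+\op{D}_2\op{C}}\ssp{W}\subseteq\ssp{W}$, which is the claim.

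The main obstacle I anticipate is the first step, namely upgrading the two infinitesimal invariance statements into $\op{D}\op{C}\,\ssp{W}\subseteq\ssp{W}$: one must justify both that $\ssp{W}\cap\dom{A}$ is dense in $\ssp{W}$ and that $\fld{T}_{\op{G}_1}$-invariance transfers to the generator on this dense set, and only then can the density/boundedness argument extend the inclusion off $\dom{A}$. The resolvent manipulation afterwards is routine, provided one checks that the large-$\lambda$ range lies simultaneously in $\rho_\infty(\op{G}_1)$ and $\rho_\infty(\op{A}+\op{D}_2\op{C})$, which holds because both operators are $C_0$ generators and hence have spectra bounded to the right.
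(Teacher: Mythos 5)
Your proof is correct, and it takes a genuinely different route from the paper's. The paper fixes $\lambda$ in the common part of the two resolvent sets and argues directly on the resolvent of $\op{A}+\op{D}_2\op{C}$: it splits $\ssp{W}$ as $\ssp{W}_c+\ssp{W}_\infty$ according to whether the $(\lambda\op{I}-(\op{A}+\op{D}_1\op{C}))^{-1}$-image lands in the finite-dimensional piece $\ssp{W}_\mr{f}$ or in $\ssp{W}\cap\ker\op{C}$, handles $\ssp{W}_\infty$ by noting that the two resolvents agree on preimages lying in $\ker\op{C}$, and then runs a basis-counting argument on the finite-dimensional complement; this decomposition leans on the finite rank of $\op{C}$ via Lemma~\ref{Lem:CA_equalency}. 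You instead first upgrade the two infinitesimal invariance statements to the operator inclusion $(\op{D}_2-\op{D}_1)\op{C}\,\ssp{W}\subseteq\ssp{W}$ (your density-plus-closedness argument for $\overline{\ssp{W}\cap\dom{A}}=\ssp{W}$ and for $\op{G}_1w\in\ssp{W}$ is sound, since $(\fld{T}_{\op{G}_1}(t)w-w)/t$ stays in the closed subspace $\ssp{W}$), and then propagate resolvent invariance through the Neumann series for the boundedly perturbed generator. This buys two things: the argument never uses finite-rankness of $\op{C}$, only boundedness of $\op{D}_1\op{C}$ and $\op{D}_2\op{C}$, so it is strictly more general than the paper's; and your first step shows that the condition $(\op{D}_1-\op{D}_2)\op{C}\ssp{W}\subseteq\ssp{W}$, which the paper quotes after the lemma as a \emph{sufficient} condition for $\op{D}_2$ to be a friend, is in fact automatic under the lemma's hypotheses. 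The only point both proofs treat equally loosely is that resolvent invariance is verified only for large real $\lambda$ rather than all of $\roi{A+D_2C}$ before invoking Lemma~\ref{Lem:A-1_T-Inv}; the standard analytic-continuation argument closes this gap in either case.
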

\begin{proof}
	By invoking Lemma \ref{Lem:A-1_T-Inv}, we have $(\lambda\op{I}-(\op{A+D}_1\op{C}))^{-1}\ssp{W}\subseteq\ssp{W}$, for all $\lambda\in\rho_\infty(\op{A+D}_1\op{C})$. Let us set $\lambda\in\rho_\infty(\op{A+D}_1\op{C})\cap\rho_\infty(\op{A+D}_2\op{C})$ (by using the Hille-Yosida theorem (\cite{Curtain_Book}-Theorem 2.1.12), where it is shown that for every infinitesimal generator $\op{A}$ there exists a real number $r\in\fld{R}$ such that $[r,\;\infty)\subset\roi{A}$ and we have the set $\rho_\infty(\op{A+D}_1\op{C})\cap\rho_\infty(\op{A+D}_2\op{C})$ non-empty). Based on results of Lemma \ref{Lem:A-1_T-Inv}, we need to show that $(\lambda\op{I-(A+D}_2\op{C}))^{-1}\ssp{W}\subseteq\ssp{W}$. First, let $\ssp{W}_c = \{y|y\in\ssp{W}\; ; \;(\lambda\op{I}-(\op{A+D}_1\op{C}))^{-1}y\in\ssp{W}_\mr{f}\}$, where $\ssp{W}_\mr{f}\subset\dom{A}$ is defined as in the proof of Lemma \ref{Lem:CA_equalency} and $\ssp{W}_\infty = \{y|y\in\ssp{W}\; ; \;(\lambda\op{I}-(\op{A+D}_1\op{C}))^{-1}y\in\ssp{W}\cap\ker\op{C}\}$. Since $\ssp{W}=\ssp{W}_\mr{f}+\ssp{W}\cap\ker\op{C}$, $\dim(\ssp{W}_\mr{f})<\infty$ and $(\lambda\op{I}-(\op{A+D}_1\op{C}))^{-1}$ is bounded and bijective, it then follows that $\ssp{W} = \ssp{W}_c+\ssp{W}_\infty$.
	%By the definition of  resolvent operators, we have $(\lambda\op{I}-(\op{A+D}_1\op{C}))^{-1}\ssp{W}_c\subseteq\dom{A}$.
	%For all $y\in\ssp{W}$ such that  $(\lambda\op{I}-(\op{A+D}_1\op{C}))^{-1}y\in\ker\op{C}$, it follows that $(\lambda \op{I}-(\op{A+D}_2\op{C}))^{-1}y = (\lambda\op{I}-(\op{A+D}_1\op{C}))^{(-1)}y\in\ssp{W}$.
	 Let $y\in\ssp{W}_\infty$ and $x=(\lambda\op{I}-(\op{A+D}_1\op{C}))^{-1}y$.  Given that $x\in\ker\op{C}$, it follows that
	 \begin{align}\label{Eq:xWc}
	 y = (\lambda\op{I}-(\op{A+D}_1\op{C}))x =  (\lambda\op{I}-(\op{A+D}_2\op{C}))x =  (\lambda\op{I}-\op{A})x.
	 \end{align}
	 Since $\ssp{W}$ is $(\lambda\op{I}-(\op{A+D}_1\op{C}))^{-1}$-invariant, one obtains $x\in\ssp{W}$, and consequently we have $(\lambda\op{I}-(\op{A+D}_2\op{C}))^{-1}y=x\in\ssp{W}$.

	Next, by following along the steps provided below we show that if $y\in\ssp{W}_c$ then $(\lambda\op{I}-(\op{A+D}_2\op{C}))^{-1}y\in\ssp{W}$.
	\begin{enumerate}
		\item Let $\{w_i\}_{i=1}^{n_\mr{f}}$ be a basis of $\ssp{W}_\mr{f}$ and set $z_i = (\lambda\op{I}-(\op{A+D}_2\op{C}))w_i\in\ssp{W}$ for $i=1,\cdots,n_\mr{f}$ (as $(\op{A+D}_2\op{C})(\ssp{W}\cap\dom{A})\subseteq\ssp{W}$). Since $\ssp{W}=\ssp{W}_c+\ssp{W}_\infty$ one can write $z_i=z_c^i+z_\infty^i$, where $z_c^i\in\ssp{W}_c$ and $z_\infty^i\in\ssp{W}_\infty$.
		\item We show that $z_c^i$'s are linearly independent. Towards this end, assume $z_c^i$ are linearly dependent and therefore we obtain $\sum_{i=1}^{n_\mr{f}}\zeta_iz_c^i=0$, where $\zeta_i\in\fld{R}$ for $i=1,\cdots,n_\mr{f}$. Hence, one can write 	 $(\lambda\op{I}-(\op{A+D}_2\op{C}))w=z_\infty$, where $w =\sum_{i=1}^{n_\mr{f}}\zeta_iw_i\neq 0$ (since $w_i$'s are basis vectors), and $z_\infty=\sum_{i=1}^{n_\mr{f}}\zeta_iz_i=\sum_{i=1}^{n_\mr{f}}\zeta_iz_\infty^i\in\ssp{W}_\infty$. Consequently, given $w=(\lambda\op{I}-(\op{A+D}_2\op{C}))^{-1}z_\infty$ and by the definition of $\ssp{W}_\infty$ we have $w\in\ker\op{C}$ and $w = (\lambda\op{I}-\op{A+D}_1\op{C})^{-1}z_\infty$ \footnote{Since $z_\infty\in\ssp{W}_\infty$, we obtain $(\lambda\op{I}-(\op{A+D}_1\op{C}))^{-1}z_\infty\in\ker\op{C}$, and consequently $w =(\lambda\op{I}-(\op{A+D}_1\op{C}))^{-1} =(\lambda\op{I}-(\op{A+D}_2\op{C}))^{-1}z_\infty\in\ker\op{C}$.}. This is in contradiction with the fact $w\in\ssp{W}_\mr{f}$ (recall that $\ssp{W}_\mr{f}\cap\ker\op{C}=0$). Therefore, $z_c^i$'s are linearly independent. Since the resolvent operators are bijective and $\ssp{W}_c$ is Fin-D, we obtain $\dim(\ssp{W}_c)=\dim(\ssp{W}_\mr{f})=n_\mr{f}$, and consequently $\{z_c^i\}_{i=1}^{n_\mr{f}}$ is a basis of $\ssp{W}_c$.
		\item We show that $(\lambda\op{I}-(\op{A+D}_2\op{C}))^{-1}z_c^i\in\ssp{W}$, where $z_i=z_c^i+z_\infty^i=(\lambda\op{I}-\op{A+D}_2\op{C})w_i$, $w_i\in\ssp{W}$ and $z_\infty^i$'s are defined as above. Set $w_\infty^i = (\lambda\op{I}-(\op{A+D}_2\op{C}))^{-1}z_\infty^i$. As shown above in \eqref{Eq:xWc}, we have  $w_\infty^i\in\ssp{W}$. Since $w_i\in\ssp{W}_\mr{f}\subseteq\ssp{W}$ it follows that $(\lambda\op{I}-(\op{A+D}_2\op{C}))^{-1}z_c^i=w_i-w_\infty^i\in\ssp{W}$. Given that $\spanset{z_c^i}_{i=1}^{n_\mr{f}}$ is a basis of $\ssp{W}_c$, we obtain $(\lambda\op{I}-(\op{A+D}_2\op{C}))^{-1}\ssp{W}_c\subseteq\ssp{W}$.
	\end{enumerate}
		
	Finally,  for every $y\in\ssp{W}$ one can write $y = y_c+y_\infty$, where $y_c\in\ssp{W}_c$ and $y_\infty\in\ssp{W}_\infty$. As we have shown above $(\lambda\op{I}-(\op{A+D}_2\op{C}))^{-1}\ssp{W}_\infty\subseteq\ssp{W}$ and $(\lambda\op{I}-(\op{A+D}_2\op{C}))^{-1}\ssp{W}_c\subseteq\ssp{W}$. Therefore, $(\lambda\op{I}-(\op{A+D}_2\op{C}))^{-1}y\in\ssp{W}$, and consequently $(\lambda\op{I}-(\op{A+D}_2\op{C}))^{-1}\ssp{W}\subseteq\ssp{W}$. This completes the proof of the lemma.
\end{proof}
A bounded operator $\op{D}$ is called a \underline{friend} of the  $\fld{T}$-conditioned invariant subspace $\ssp{W}$ if $\SG{T}{A+DC}\ssp{W}\subseteq\ssp{W}$. The set of all friend operators of $\ssp{W}$ is denoted by $\FrOp{D}{W}$. Let $\op{D}\in\FrOp{D}{W}$ and consider a bounded operator $\op{D}_0$. As in Fin-D systems \cite{Mass_Thesis} (page 31), it follows (by using the above lemma) that a sufficient condition for $\op{D}_0$ to be a friend of $\ssp{W}$ is $(\op{D}-\op{D}_0)\op{C}\ssp{W}\subseteq\ssp{W}$.

We are now in a position to state the main results of this subsection leading us to the necessary and sufficient conditions for the $\fld{T}$-conditioned invariance of regular RS systems.
\begin{theorem}\label{Thm:CA_T_Inv}
	Consider the regular RS system \eqref{Eq:GeneralID} such that the operator $\op{C}$ is defined according to \eqref{Eq:OuputOperator}. The \CA-invariant subspace $\ssp{W}$ is an $\fld{T}$-conditioned invariant subspace if and only if
	\begin{equation}\label{Eq:TCA-InvCon}
	\ssp{W}=\ssp{W}_\phi+\ssp{W}_\mathrm{f},
	\end{equation}
	and $\overline{\dom{A}\cap\ssp{W}} = \ssp{W}$, where $\dim(\ssp{W}_\mr{f})<\infty$ and $\ssp{W}_\phi$ is the largest subspace contained in $\ssp{W}$ that can be expressed as
	\begin{equation}\label{Eq:EigeSpan}
	\ssp{W}_\phi = \overline{\spanset{\ssp{E}_i}_{i\in\fld{I}}}\;,
	\end{equation}
	in which $\ssp{E}_i$'s are the sub-eigenspaces of $\op{A}$ and $\fld{I}\subseteq\fld{N}$.
\end{theorem}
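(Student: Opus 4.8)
The plan is to prove the two implications separately, in each case translating the $\fld{T}$-conditioned invariance of $\ssp{W}$ (a statement about the perturbed generator $\op{A+DC}$) into the spectral decomposition supplied by Corollary \ref{Col:A-T-Inv}, and exploiting the fact that $\op{A}$ and $\op{A+DC}$ coincide on $\ker\op{C}$. As a preliminary I would record that $\ssp{W}_\phi$ is well defined: the family of sub-eigenspaces of $\op{A}$ contained in $\ssp{W}$ is stable under sums taken inside a common eigenspace $\ssp{P}_i$ and under aggregation across distinct eigenvalues, so the closed span of all of them is again of the form \eqref{Eq:EigeSpan} and is the largest such subspace sitting inside the closed set $\ssp{W}$.

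For the necessity (\textbf{only if}) direction, suppose $\ssp{W}$ is $\fld{T}$-conditioned invariant with friend $\op{D}$. Since $\op{DC}$ is bounded, Remark \ref{Rem:CondOnDefRegRSOp} guarantees that $\op{A+DC}$ is again a regular RS operator, and by hypothesis $\ssp{W}$ is $\SG{T}{A+DC}$-invariant. I would first obtain $\overline{\dom{A}\cap\ssp{W}}=\ssp{W}$ for free: for $w\in\ssp{W}$ the averages $\tfrac{1}{h}\int_0^h\SG{T}{A+DC}(s)w\,\mr{d}s$ lie in $\dom(\op{A+DC})\cap\ssp{W}=\dom{A}\cap\ssp{W}$ (as $\dom(\op{A+DC})=\dom{A}$) and converge to $w$, so $\dom{A}\cap\ssp{W}$ is dense in $\ssp{W}$. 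Next, Corollary \ref{Col:A-T-Inv} applied to $\op{A+DC}$ yields $\ssp{W}=\overline{\spanset{\hat{\ssp{E}}_i}_{i\in\fld{I}}}$, where each $\hat{\ssp{E}}_i$ is a sub-eigenspace of $\op{A+DC}$. The decisive observation is that whenever $\hat{\ssp{E}}_i\subseteq\ker\op{C}$ one has $\op{A}x=(\op{A+DC})x$ for $x\in\hat{\ssp{E}}_i$, so $\hat{\ssp{E}}_i$ is in fact a sub-eigenspace of $\op{A}$ and therefore lies in $\ssp{W}_\phi$. It then remains to show that only finitely many indices satisfy $\hat{\ssp{E}}_i\not\subseteq\ker\op{C}$; calling this finite set $\fld{J}$ and invoking Lemma \ref{Lem:SumClosedSpace} splits $\ssp{W}=\overline{\spanset{\hat{\ssp{E}}_i}_{i\in\fld{I}\setminus\fld{J}}}+\spanset{\hat{\ssp{E}}_i}_{i\in\fld{J}}$, the first summand being a span of sub-eigenspaces of $\op{A}$ (hence inside $\ssp{W}_\phi$) and the second finite dimensional; absorbing the latter into $\ssp{W}_\mr{f}$ gives \eqref{Eq:TCA-InvCon}.

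For the sufficiency (\textbf{if}) direction, assume \eqref{Eq:TCA-InvCon} and $\overline{\dom{A}\cap\ssp{W}}=\ssp{W}$. Since $\op{C}$ is finite rank and $\ssp{W}$ is \CA-invariant, Lemma \ref{Lem:CA_equalency} already furnishes a bounded $\op{D}$ with $(\op{A+DC})(\ssp{W}\cap\dom{A})\subseteq\ssp{W}$, and $\op{A+DC}$ generates a $C_0$-semigroup because $\op{DC}$ is a bounded perturbation. I would use the explicit $\op{D}$ constructed in the proof of Lemma \ref{Lem:CA_equalency}, for which $(\op{A+DC})\ssp{W}_\mr{f}=0$ and $\op{A+DC}$ agrees with $\op{A}$ on $\ker\op{C}$. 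With this choice $\ssp{W}_\mr{f}$ becomes a sub-eigenspace of $\op{A+DC}$ (for the eigenvalue $0$), while the sub-eigenspaces of $\op{A}$ comprising $\ssp{W}_\phi$, which lie in $\ker\op{C}$, remain sub-eigenspaces of $\op{A+DC}$; thus $\ssp{W}$ is a closed span of sub-eigenspaces of $\op{A+DC}$ and Corollary \ref{Col:A-T-Inv} (for $\op{A+DC}$) gives $\SG{T}{A+DC}\ssp{W}\subseteq\ssp{W}$. Lemma \ref{Lem:DiffOp4CondInv} then confirms that the conclusion is independent of the particular friend.

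The hard part is the finiteness claim in the necessity direction, namely that the sub-eigenspaces $\hat{\ssp{E}}_i$ of $\op{A+DC}$ occurring in $\ssp{W}$ fail to lie in $\ker\op{C}$ for only finitely many $i$. This is precisely the step that must combine both standing hypotheses: the finite rank of $\op{C}$, which bounds the codimension of $\ker\op{C}$ in $\ssp{W}$ and produces the finite-dimensional complement $\ssp{W}_\mr{f}$ through Lemma \ref{Lem:SumClosedSpace}, together with the spectral-gap/non-accumulation condition of Remark \ref{Rem:CondOnDefRegRSOp}, which must be used to prevent the perturbed eigenvalues of $\op{A+DC}$ from producing infinitely many eigenvectors outside $\ker\op{C}$. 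Separating the eigenvalues of $\op{A+DC}$ shared with $\op{A}$ from the genuinely perturbed ones, and handling the finitely many multiple eigenvalues, is where I expect the real bookkeeping to lie, and it is the point that must be argued with care rather than by appealing loosely to ``finite rank plus discreteness''; it is also the natural place to dovetail the sufficiency construction, since that argument implicitly requires the infinite part $\ssp{W}_\phi$ to be realizable inside $\ker\op{C}$.
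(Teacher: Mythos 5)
Your high-level architecture (reduce everything to Corollary \ref{Col:A-T-Inv} applied to $\op{A+DC}$, and exploit that $\op{A}$ and $\op{A+DC}$ agree on $\ker\op{C}$) is the same as the paper's, and several pieces are right: the density of $\dom{A}\cap\ssp{W}$ via semigroup averaging, and the observation that a sub-eigenspace of $\op{A+DC}$ contained in $\ker\op{C}$ is a sub-eigenspace of $\op{A}$ and hence sits in $\ssp{W}_\phi$. But there are two genuine gaps. The first is in your sufficiency argument: you take the friend $\op{D}$ from Lemma \ref{Lem:CA_equalency} and assert that the sub-eigenspaces of $\op{A}$ comprising $\ssp{W}_\phi$ ``lie in $\ker\op{C}$'' and therefore survive as sub-eigenspaces of $\op{A+DC}$. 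That premise is false in general: $\ssp{W}_\phi$ is only required to be a closed span of sub-eigenspaces of $\op{A}$ contained in $\ssp{W}$, not in $\ker\op{C}$ (take $\ssp{W}=\op{X}$ for an extreme example). The $\op{D}$ of Lemma \ref{Lem:CA_equalency} only annihilates $\op{A}$ on the finite-dimensional complement $\ssp{W}_\mr{f}$ and leaves $\op{A}$ unchanged on $\ssp{W}\cap\ker\op{C}$; for $x\in\ssp{W}_\phi$ with $\op{C}x\neq 0$ one gets $(\op{A+DC})x\neq\op{A}x$, so the eigenstructure of $\ssp{W}_\phi$ is destroyed and your appeal to Corollary \ref{Col:A-T-Inv} collapses. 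The bulk of the paper's ``if'' proof exists precisely to repair this: it builds a coarser output $\op{C}_1=H_\mr{f}\op{C}$ with $\ker H_\mr{f}\op{C}\supseteq\ker\op{C}+\ssp{W}_\phi$, proves the nontrivial fact that $\ssp{W}$ is still ($\op{C}_1$,$\op{A}$)-invariant (by matching vectors of $\ssp{W}_\mr{f}$ against vectors of $\ssp{W}_\phi$ with the same $\op{C}$-image), and only then runs the Lemma \ref{Lem:CA_equalency} construction to obtain $\op{D}=\op{D}_\mr{f}H_\mr{f}$ satisfying both $(\op{A+DC})(\ssp{W}\cap\dom{A})\subseteq\ssp{W}$ \emph{and} $\op{DC}\ssp{W}_\phi=0$.

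The second gap is the one you name yourself: the finiteness of the complement in the necessity direction. You correctly identify it as the crux, but you do not prove it --- you only describe where the difficulty lies, and a correct instinct about where the bookkeeping sits is not a proof. Since this claim \emph{is} the content of the ``only if'' implication, the proposal is incomplete. For comparison, the paper's route is: pick the friend with $\op{DC}\ssp{W}_\phi=0$ (the same construction as in the ``if'' part, so the two halves genuinely share machinery); note that $\ssp{W}$ and $\ssp{W}_\phi$ are then both closed spans of sub-eigenspaces of $\op{A+DC}$, hence so is the complement $\ssp{W}_\mr{f}$; split $\ssp{W}_\mr{f}$ against $\ker H_\mr{f}\op{C}$, where finite rank of $H_\mr{f}\op{C}$ caps the transverse part at finite dimension; and argue that an infinite-dimensional remainder inside $\ker H_\mr{f}\op{C}$ would, because $\op{A+DC}$ is again a regular RS operator with only finitely many multiple eigenvalues (Remark \ref{Rem:CondOnDefRegRSOp}), have to contain infinitely many sub-eigenspaces of $\op{A}$ inside $\ssp{W}$, contradicting the maximality of $\ssp{W}_\phi$. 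Your proposed dichotomy (``finitely many $\hat{\ssp{E}}_i\not\subseteq\ker\op{C}$'') is also not quite the right invariant to chase, since finite rank of $\op{C}$ does not directly bound the \emph{number} of sub-eigenspaces meeting the complement of $\ker\op{C}$ --- it bounds the dimension of a transversal --- which is why the paper routes the argument through the maximality of $\ssp{W}_\phi$ rather than through counting eigenspaces.
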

\begin{proof}
	({\bf If part}): Let $\ssp{W}=\ssp{W}_\phi+\ssp{W}_\mathrm{f}$. We show that $\ssp{W}$ can be spanned by the eigenspaces of $\op{A+DC}$, for a bounded $\op{D}$ (and therefore according to Corollary \ref{Col:A-T-Inv}, $\ssp{W}$ is $\SG{T}{A+DC}$-invariant). By invoking Lemma \ref{Lem:DiffOp4CondInv} we need to show this property for only one $\op{D}\in\underline{\op{D}}(\ssp{W})$.
	 Without loss of any generality, assume that  $\ssp{W}_\phi\cap\ssp{W}_\mr{f}=0$ (if $\ssp{W}_1 = \ssp{W}_\phi\cap\ssp{W}_\mr{f}\neq0$, redefine $\ssp{W}_\mr{f}$ to $\ssp{W}_\mr{f} = \ssp{W}_\mr{f}/\ssp{W}_1$).
	
	 First, we show that one can assume $\ssp{W}_\mr{f}\subset\dom{A}$ without loss of any generality. Since $\ssp{W}_\phi$ is $\SG{T}{A}$-invariant, it follows that  $\overline{\ssp{W}_\phi\cap\dom{A}}=\ssp{W}_\phi$ \cite{Zwart_Book}. Also, one can assume that $\overline{\ssp{W}\cap\dom{A}}=\ssp{W}$. If $\ssp{W}_\phi$ is Fin-D, $\ssp{W}$ is Fin-D, and hence $\ssp{W}_\mr{f}\subseteq\ssp{W}\subset\dom{A}$. Let, $\ssp{W}_\phi$ be Inf-D. By following along the same steps as in Lemma \ref{Lem:CA_equalency}, we define the basis $\{w_i\}_{i=1}^\infty$ of $\ssp{W}$ such that $w_i\in\dom{A}$ for all $i\in\fld{N}$ and $\{w_i\}_{i=n_\mr{f}+1}^\infty$ is a basis for $\ssp{W}_\phi$, where $n_\mr{f}=\dim(\ssp{W}_\mr{f})$ (since $\overline{\ssp{W}_\phi\cap\dom{A}}=\ssp{W}_\phi$ the existence of the basis  $\{w_i\}_{i=n_\mr{f}+1}^\infty$ is guaranteed). Let us set $\ssp{W}_\mr{ff} = \spanset{w_i}_{i=1}^{n_\mr{f}}\subset\dom{A}$, where it follows that $\ssp{W}=\ssp{W}_\phi+\ssp{W}_\mr{ff}$. Therefore, without loss of any generality, we assume $\ssp{W}_\mr{f}=\ssp{W}_\mr{ff}\subset\dom{A}$.
	
	Second, to show the result we first construct the bounded operator $\op{D}$ such that (i) $(\op{A+DC})(\ssp{W}\cap\dom{A})\subseteq\ssp{W}$, and (ii) $\op{DC}\ssp{W}_\phi=0$.  Define $\ssp{W}_\mr{fpc} = \ssp{W}_\mr{f}\cap(\ssp{W}_\mr{f}\cap\ker\op{C})^\perp$ and $\ssp{W}_\mr{fc}=\{w|w\in\ssp{W}_\mr{fpc}\; ,\; \op{C}w\neq\op{C}w_\phi, \forall w_\phi\in\ssp{W}_\phi\}$. In other words, $\ssp{W}_\mr{fc}$ is the largest subspace in $\ssp{W}_\mr{fpc}$ such that $\ssp{W}_\mr{fc}\cap\ker\op{C}=0$ and $\op{C}\ssp{W}_\mr{fc}\cap\op{C}\ssp{W}_\phi=0$. Moreover, by the definition of $\ssp{W}_\mr{fpc}$, we obtain $\ker\op{C}+\ssp{W}_\mr{f}/\ssp{W}_\mr{fc} =\ker\op{C}+ \ssp{W}_\mr{fpc}/\ssp{W}_\mr{fc}$. Since $\ssp{W}_\mr{f}\subset\dom{A}$, we have $\ssp{W}_\mr{fc}\subset\dom{A}$. Now, consider the operator $H_\mr{f}$ such that $\ker H_\mr{f}\op{C} = \ker\op{C}+\ssp{W}_\phi+\ssp{W}_\mr{f}/\ssp{W}_\mr{fc} =\ker\op{C}+\ssp{W}_\phi+\ssp{W}_\mr{fpc}/\ssp{W}_\mr{fc} $ and define $\op{C}_1=H_\mr{f}\op{C}$ (since $\ker\op{C}\subseteq\ker\op{C}_1$, there always exists a solution for $H_\mr{f}$). First, we show that $\ssp{W}$ is also an ($\op{C}_1$,$\op{A}$)-invariant subspace in two steps as follows.
	\begin{enumerate}
		\item  Let $w\in\ssp{W}_\mr{fpc}/\ssp{W}_\mr{fc}$. We show that $\op{A}w\in\ssp{W}$ (if $\ssp{W}_\mr{fpc}=\ssp{W}_\mr{fc}$, we have $w=0$ and we skip this step).  Since $\ssp{W}_\mr{fpc}\subset\ssp{W}_\mr{f}$, $\ssp{W}_\mr{fc}\subset\ssp{W}_\mr{f}$ and $\ssp{W}_\mr{f}\subset\dom{A}$, it follows that $w\in\dom{A}$. By the definition of $\ssp{W}_\mr{fc}$, there exists a $w_\phi\in\ssp{W}_\phi$ such that $\op{C}w=\op{C}w_\phi\neq0$. Next, we show that $w_\phi\in\dom{A}$. Let $\ssp{W}_\phi^\mr{p}\subset\ssp{W}_\phi$ be the subspace such that $\op{C}\ssp{W}_\phi^\mr{p} = \op{C}(\ssp{W}_\mr{fpc}/\ssp{W}_\mr{fc})$ and $\dim(\ssp{W}_\phi^\mr{p})=\dim(\ssp{W}_\mr{fpc}/\ssp{W}_\mr{fc})$. Also, let $\{w_\phi^i\}_{i=1}^\infty$ be a basis of $\ssp{W}_\phi$ such that $w_\phi^i\in\dom{A}$ (since $\overline{\ssp{W}_\phi\cap\dom{A}}=\ssp{W}_\phi$, this basis exists). By following along the same steps as in Lemma \ref{Lem:CA_equalency}, we can assume $w_\phi^i$'s such that  $w_\phi^i\in\ssp{W}_\phi^p$ for all $i\leq n_\phi$ and $w_\phi^i\in\ssp{W}_\phi/\ssp{W}_\phi^p$ for $i>n_\phi$. Therefore, since $\op{C}$ on $\ssp{W}_\mr{fpc}/\ssp{W}_\mr{fc}$ is bijective,  one can find $w_\phi\in\spanset{w_\phi^i}_{i=1}^{n_\phi}$ such that $\op{C}w = \op{C}w_\phi$, and since $w_\phi^i\in\dom{A}$, it follows that $w_\phi\in\dom{A}$. Now, let us set $w_c = (w-w_\phi)\in\ssp{W}\cap\ker\op{C}\cap\dom{A}$. Since $\op{A}w_\phi\in\ssp{W}$ (recall $\ssp{W}_\phi$ is $\op{A}$-invariant), and $\op{A}(\ssp{W}\cap\ker\op{C}\cap\dom{A})\subseteq\ssp{W}$, it follows that $\op{A}w\in\ssp{W}$.
		\item By considering the subspace $\ssp{W}_\phi^\mr{p}$, we decompose $\ssp{W}_\phi$ as $\ssp{W}_\phi = \ssp{W}_\phi^\mr{p}+\ssp{W}_\phi^\mr{c}+\ssp{W}_\phi\cap\ker\op{C}$, where $\ssp{W}_\phi^\mr{c}\cap\ssp{W}_\phi^\mr{p}=0$ and $\ssp{W}_\phi^\mr{c}\cap\ker\op{C}=0$. Similar to the above analysis we can assume $\ssp{W}_\phi^c\subset\dom{A}$ (i.e., there exists a subspace $\ssp{W}_\phi^c\subset\dom{A}$ that satisfies the above conditions).  By the definition of $H_\mr{f}$, it follows that $\ker H_\mr{f}\op{C} = \ker\op{C}+(\ssp{W}_\mr{fpc}/\ssp{W}_\mr{fc})+\ssp{W}_\phi^\mr{p}+\ssp{W}_\phi^\mr{c}$. Let $w\in(\ssp{W}\cap\ker\op{C}_1\cap\dom{A})$. It follows that $w=w_\mr{p}+w_\phi+w_\infty$, where $w_\mr{p}\in\ssp{W}_\mr{fpc}/\ssp{W}_\mr{fc}\subset\dom{A}$, $w_\phi\in(\ssp{W}_\phi^\mr{p}+\ssp{W}_\phi^\mr{c})\subset\dom{A}$ and $w_\infty\in\ssp{W}\cap\ker\op{C}$. Since $w,w_\mr{p},w_\phi\in\dom{A}$, it follows that $w_\infty\in\dom{A}$. As shown above, $\op{A}w_{p}\in\ssp{W}$, $\op{A}w_\phi\in\ssp{W}_\phi\subseteq\ssp{W}$ (since $\ssp{W}_\phi$ is $\op{A}$-invariant) and also $\op{A}w_\infty\in\ssp{W}$ (recall that $\ssp{W}$ is \CA-invariant). Therefore, $\op{A}w\in\ssp{W}$, and consequently $\op{A}(\ssp{W}\cap\ker\op{C}_1\cap\dom{A})\subseteq\ssp{W}$.
	\end{enumerate}
	Third, by following along the same steps as in Lemma \ref{Lem:CA_equalency}, we construct $\op{D}_\mr{f}$ such that $(\op{A+D}_\mr{f}\op{C}_1)(\ssp{W}\cap\dom{A})\subseteq\ssp{W}$. By setting $\op{D}= \op{D}_\mr{f}H_\mr{f}$, one can write $(\op{A+DC})(\ssp{W}\cap\dom{A})\subseteq\ssp{W}$.
	
	 Fourth, it should be pointed out that since $\ssp{W}_\phi\subseteq\ker H_\mr{f}\op{C}$ (refer to the definition of $H_\mr{f}$), we obtain  $\ssp{W}_\phi\subseteq\ker\op{C}_1$, and therefore, we have $\op{DC}\ssp{W}_\phi=\op{D}_\mr{f}\op{C}_1\ssp{W}_\phi=0$ . Consequently, it follows that every sub-eigenspace $\ssp{E}_i\subset\ssp{W}_\phi$ is also the sub-eigenspace of the operator $\op{A+DC}$. Therefore, $(\lambda\op{I}-(\op{A+DC}))^{-1}\ssp{W}_\phi\subseteq\ssp{W}_\phi$. Moreover, recall that $\ssp{W}_\mr{f}\subset\dom{A}$ and  the operator $\op{D}_\mr{f}$ is also defined such that $(\op{A+D}_\mr{f}\op{C}_1)\ssp{W}_\mr{f}\subseteq\ssp{W}_\mr{f}$ (refer to the proof of Lemma \ref{Lem:CA_equalency}). Therefore, by invoking  Lemmas \ref{Lem:A-1_T-Inv} and \ref{Lem:CAInv}, we obtain $(\lambda\op{I}-(\op{A+D}_\mr{f}\op{C}_1))^{-1}\ssp{W}_\mr{f}\subseteq\ssp{W}_\mr{f}$, and consequently $(\lambda\op{I}-(\op{A+DC}))^{-1}\ssp{W}_\mr{f}\subseteq\ssp{W}_\mr{f}$.
	
	 Finally, by invoking Lemma \ref{Lem:A-1_T-Inv} and Corollary \ref{Col:A-T-Inv}, it follows that $\ssp{W}_\mr{f}$ is also a sum of sub-eigenspaces of $(\op{A+DC})$. Therefore, $\ssp{W}$ is spanned by the sub-eigenspaces of $(\op{A+DC})$, and again by invoking Corollary \ref{Col:A-T-Inv}, $\ssp{W}$ is $\fld{T}_{\op{A+DC}}$-invariant, that is $\fld{T}$-conditioned invariant.\\
	({\bf Only if part}): Consider $\ssp{W}$ to be $\fld{T}$-conditioned invariant. By Definition \ref{Def:ConditionInv}, item 3), there exists a bounded operator $\op{D}$ such that $\ssp{W}$ is $\fld{T}_{\op{A+DC}}$-invariant (and also $(\op{A+DC})$-invariant) and $\ssp{W}=\overline{\spanset{\ssp{E}_i^D}_{i\in\fld{I}_D}}$, where $\ssp{E}_i^D$'s are the sub-eigenspaces of $(\op{A+DC})$. As in the first part of the proof, first we construct a bounded operator $\op{D}$ such that (i) $(\op{A+DC})(\ssp{W}\cap\dom{A})\subseteq\ssp{W}$, and (ii) $\op{DC}\ssp{W}_\phi=0$, where $\ssp{W}_\phi$ is the largest $\SG{T}{A}$-invariant contained in $\ssp{W}$. Consequently, we have $\ssp{W}=\overline{\ssp{W}_\phi+\ssp{W}_\mr{f}}$, and we then show that $\ssp{W}_\mr{f}$ is Fin-D.
	
	Let $\op{D}$ be  a bounded operator such that $\op{D} = \op{D}_\mr{f}H_\mr{f}$,  where $\ssp{W}_\phi$ is the largest $\SG{T}{A}$-invariant contained in $\ssp{W}$  (as expressed in equation \eqref{Eq:EigeSpan}) and $\ker H_\mr{f}\op{C} = \ssp{W}_\phi$. Moreover, $\op{D}_\mr{f}$ is defined by following along the same lines as in the proof of Lemma \ref{Lem:CA_equalency}.
	%Also, as stated above, the operator $\op{A+DC}$ is also a regular  RS operator, and consequently (by invoking Corollary \ref{Col:A-T-Inv}) $\ssp{W} =\overline{ \spanset{\ssp{E}_i^D}_{i\in\fld{I}_D}}$, where $\fld{I}_D\subseteq\fld{N}$ is an index subset and $\ssp{E}_i^D$ is an sub-eigenspace of $\op{A+DC}$. Recall that if $\ssp{E}_i^D\subseteq(\ssp{W}\cap\ker H_\mr{f}\op{C})$ then  as shown above  $\op{DC}\ssp{E}_i^D=0$ and one can write $(\lambda\op{I-A})^{-1}\ssp{E}_i^D  = (\lambda\op{I-(A+DC)})^{-1}\ssp{E}_i^D\subseteq \ssp{E}_i^D$ (by following along the same steps in \eqref{Eq:xWc}). Therefore $\ssp{E}_i^D = \ssp{E}_j$, where $\ssp{E}_j$'s are  sub-eigenspaces of the operator $\op{A}$. Let $\ssp{W}_\phi\subseteq\ssp{W}\cap\ker H_\mr{f}\op{C}$ denotes the largest subspace that can be expressed as \eqref{Eq:EigeSpan}.
	By using the fact that $\op{DC}\ssp{W}_\phi = 0$, it follows that $\ssp{W}_\phi = \overline{\spanset{\ssp{E}_j}_{j\in\fld{I}}}$, where $\fld{I}$ denotes an index set such that for each $j\in\fld{I}$ there exists an $i\in\fld{I}_D$ (recall $\ssp{W} = \overline{\spanset{\ssp{E}_i^D}_{i\in\fld{I}_D}}$ ) such that $\ssp{E}_j=\ssp{E}_i^D\subseteq(\ssp{W}\cap\ker H_\mr{f}\op{C})$.
 	
	Let us now set $\ssp{W} = \overline{\ssp{W}_\phi+\ssp{W}_\mr{f}}$, where $\ssp{W}_\mr{f}\cap\ssp{W}_\phi=0$. We show that $\dim(\ssp{W}_\mr{f})<\infty$ by contradiction. Since $\ssp{W}$ and $\ssp{W}_\phi$ are sums of sub-eigenspaces of $(\op{A+DC})$, it follows that $\ssp{W}_\mr{f}$ enjoys the same property. Let us assume that $\dim(\ssp{W}_\mr{f})=\infty$, and consider the subspace $\ssp{W}_\mr{fc}\subset\ssp{W}_\mr{f}$ such that  $\ssp{W}_\mr{fc}\subset\dom{A}$, $\ssp{W}_\mr{fc}\cap\ker H_\mr{f}\op{C}=0$ and $\ssp{W}_\mr{f} = \ssp{W}_\mr{fc}+\ssp{W}_\mr{f}\cap\ker H_\mr{f}\op{C}$ (following the above analysis since $H_\mr{f}\op{C}$ is  finite rank, by invoking the same steps as in the proof of Lemma \ref{Lem:CA_equalency},  the existence of this subspace can be guaranteed). Since $\ssp{W}_\mr{fc}\subset\dom{A}$ and $(\op{A+DC})\ssp{W}_\mr{fc}=0\subset\ssp{W}_\mr{fc}$ (refer to Lemma \ref{Lem:CA_equalency}, where we define the injection output operator), by invoking Lemma \ref{Lem:CAInv} and Corollary \ref{Col:A-T-Inv}, it follows that one can assume that $\ssp{W}_\mr{fc}$ is a sub-eigenspaces of $(\op{A+DC})$. Since $\ssp{W}_\mr{f}$ is a sum of sub-eigenspaces of $\op{A+DC}$, we obtain $\ssp{W}_\text{f}\cap\ker H_\mr{f}\op{C} = \overline{\spanset{\ssp{E}_i^D}_{i\in\fld{I}_\mr{f}}}+\ssp{W}_\mr{ff}$, where $\fld{I}_\mr{f}\subseteq\fld{I}_D$, and $\ssp{W}_\mr{ff}+\ssp{W}_\mr{fc}$ is also a sub-eigenspace of $(\op{A+DC})$ (note that it is possible to have  $\ssp{W}_\mr{ff}=0$). Since $\op{A+DC}$ is a regular RS operator (refer to Remarks \ref{Rem:CondOnDefRegRSOp} and \ref{Rem:SubEig_SimpleEig}), it is necessary to have $\dim(\ssp{W}_\mr{ff})<\infty$. Hence, since $\ssp{W}_\mr{f}$ is Inf-D, we obtain $\fld{I}_\mr{f}\neq\emptyset$. However, this is in contradiction with the definition of $\ssp{W}_\phi$ (that is the largest subspace in the form \eqref{Eq:EigeSpan}), and consequently $\ssp{W}_\mr{f}$ is a Fin-D subspace, and $\ssp{W} = \ssp{W}_\phi+\ssp{W}_\mr{f}$ (refer to Lemma \ref{Lem:SumClosedSpace}). This comp
letes the proof of the theorem.
\end{proof}
\begin{remark}\label{Rem:TCon_A-Inv_Fin}
	Theorem \ref{Thm:CA_T_Inv} shows that every $\fld{T}$-conditioned invariant subspace is constructed from a sum of the subspace $\ssp{W}_\phi$, that is $\SG{T}{A}$-invariant (and possibly Inf-D), and the Fin-D subspace $\ssp{W}_\mr{f}$ such that $\ssp{W}_\mr{f}\subseteq\dom{A}$ and $\ssp{W}_\mr{f}\cap\ssp{W}_\phi=0$. Given that $\ssp{W}$ is \CA-invariant and $\ssp{W}_\phi$ is $\op{A}$ invariant, it follows that  $\ssp{W}_\mr{f}$ is \CA-invariant. Hence, by invoking Lemma \ref{Lem:CAInv}, it follows that $\ssp{W}_\mr{f}$ is $\fld{T}$-conditioned invariant.
\end{remark}
For design of our subsequent FDI scheme, we need to obtain the smallest $\fld{T}$-conditioned invariant subspace (in the inclusion sense) containing a given subspace. The following lemma allows one to show that this smallest subspace always exists.
\begin{lemma}\label{Lem:TCA-Inve_CloseIntersect}
	The set of $\fld{T}$-conditioned invariant subspaces containing a given Fin-D subspace $\ssp{L}$ and satisfying the conditions of Theorem \ref{Thm:CA_T_Inv} is closed with respect to the intersection operator.
\end{lemma}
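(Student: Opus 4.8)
The plan is to take two members $\ssp{W}^1$ and $\ssp{W}^2$ of the family and show that $\ssp{W}:=\ssp{W}^1\cap\ssp{W}^2$ is again a member (the finitely many case follows by induction). Containment $\ssp{L}\subseteq\ssp{W}$ is immediate, and \CA-invariance of $\ssp{W}$ is routine: if $z\in\ssp{W}\cap\dom{A}\cap\ker\op{C}$ then $z\in\ssp{W}^k\cap\dom{A}\cap\ker\op{C}$ for $k=1,2$, so $\op{A}z\in\ssp{W}^1\cap\ssp{W}^2=\ssp{W}$. By Theorem \ref{Thm:CA_T_Inv}, it then remains only to verify the two structural conditions for $\ssp{W}$: the decomposition \eqref{Eq:TCA-InvCon}--\eqref{Eq:EigeSpan} and the density property $\overline{\dom{A}\cap\ssp{W}}=\ssp{W}$.

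First I would identify the $\SG{T}{A}$-invariant part. Writing $\ssp{W}^k=\ssp{W}^k_\phi+\ssp{W}^k_\mr{f}$ as in Theorem \ref{Thm:CA_T_Inv}, I set $\ssp{W}_\phi:=\ssp{W}^1_\phi\cap\ssp{W}^2_\phi$. As an intersection of two $\SG{T}{A}$-invariant subspaces it is again $\SG{T}{A}$-invariant, hence of the form \eqref{Eq:EigeSpan} by Corollary \ref{Col:A-T-Inv}; moreover any subspace of the form \eqref{Eq:EigeSpan} contained in $\ssp{W}$ is $\SG{T}{A}$-invariant and contained in each $\ssp{W}^k$, hence in $\ssp{W}^k_\phi$, so $\ssp{W}_\phi$ is the \emph{largest} such subspace in $\ssp{W}$. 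To get a finite-dimensional complement I would use the projections $P_k:\ssp{W}^k\to\ssp{W}^k_\mr{f}$ along $\ssp{W}^k_\phi$, which are bounded since $\ssp{W}^k_\mr{f}$ is finite dimensional and $\ssp{W}^k_\phi$ is closed (Lemma \ref{Lem:SumClosedSpace}), and form $P=(P_1,P_2)$ on $\ssp{W}$. Its kernel is exactly $\{x\in\ssp{W}:x\in\ssp{W}^1_\phi,\ x\in\ssp{W}^2_\phi\}=\ssp{W}_\phi$ and its range lies in the finite-dimensional space $\ssp{W}^1_\mr{f}\times\ssp{W}^2_\mr{f}$, so $\dim(\ssp{W}/\ssp{W}_\phi)<\infty$.

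The crux, and the step I expect to be the main obstacle, is the density property, equivalently the existence of a finite-dimensional complement of $\ssp{W}_\phi$ lying in $\dom{A}$. It suffices to prove $\ssp{W}\subseteq\dom{A}+\ssp{W}_\phi$: given a splitting $x=r+g$ with $r\in\dom{A}$ and $g\in\ssp{W}_\phi$, one has $r=x-g\in\ssp{W}$ automatically, so $x$ is represented modulo $\ssp{W}_\phi$ by the domain element $r\in\dom{A}\cap\ssp{W}$. To produce the splitting I would expand $x$ in the eigenprojections of $\op{A}$, put $\ssp{E}^k_i=\ssp{W}^k_\phi\cap\ssp{P}_i$ and $\ssp{G}_i=\ssp{E}^1_i\cap\ssp{E}^2_i$, and set $g=\sum_i\Pi_{\ssp{G}_i}\op{P}_i x\in\ssp{W}_\phi$ with $\Pi_{\ssp{G}_i}$ the orthogonal projection of $\ssp{P}_i$ onto $\ssp{G}_i$. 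Then $\op{P}_i(x-g)=\Pi_{\ssp{G}_i^\perp}\op{P}_i x$, whose norm equals $\mr{dist}(\op{P}_i x,\ssp{G}_i)$, and, using the spectral description \eqref{Eq:A_TDef} of $\op{A}$, membership $r=x-g\in\dom{A}$ reduces to $\sum_i|\lambda_i|^2\,\mr{dist}(\op{P}_i x,\ssp{G}_i)^2<\infty$.

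To bound these distances I would use the two decompositions $x=a_1+b_1=a_2+b_2$ with $a_k\in\ssp{W}^k_\phi$ and $b_k\in\ssp{W}^k_\mr{f}\subset\dom{A}$, so that $\op{P}_i a_1-\op{P}_i a_2=\op{P}_i f$ with $f=b_2-b_1\in\dom{A}$, together with the $1$-Lipschitz property of the distance to a subspace to pass from $\op{P}_i x$ to $\op{P}_i a_1$ (up to the term $\|\op{P}_i b_1\|$, which is weighted-square-summable since $b_1\in\dom{A}$). The decisive point is the regularity of $\op{A}$: all but finitely many eigenvalues are simple, and by Remark \ref{Rem:SubEig_SimpleEig} the only sub-eigenspaces of a simple eigenvalue are $0$ and $\ssp{P}_i$; hence for all but finitely many $i$ one has $\ssp{E}^1_i,\ssp{E}^2_i\in\{0,\ssp{P}_i\}$, and a short case check gives $\mr{dist}(\op{P}_i a_1,\ssp{G}_i)\le\|\op{P}_i f\|$ (the only nontrivial case being $\ssp{E}^1_i=\ssp{P}_i$, $\ssp{E}^2_i=0$, where $\op{P}_i a_2=0$ forces $\op{P}_i a_1=\op{P}_i f$). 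The remaining finitely many possibly multiple or complex eigenvalues contribute only a finite sum. Summing and invoking $f,b_1\in\dom{A}$ then yields $\sum_i|\lambda_i|^2\,\mr{dist}(\op{P}_i x,\ssp{G}_i)^2<\infty$, so $\ssp{W}\subseteq\dom{A}+\ssp{W}_\phi$ and density follows. With \CA-invariance, the decomposition, and density in hand, Theorem \ref{Thm:CA_T_Inv} gives that $\ssp{W}^1\cap\ssp{W}^2$ is $\fld{T}$-conditioned invariant and satisfies its conditions, completing the proof. The entire difficulty is concentrated in the density step, which is precisely where the simple-spectrum structure of a regular RS operator is indispensable.
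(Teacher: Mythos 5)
Your proof is correct, and at the level of strategy it coincides with the paper's: show \CA-invariance of the intersection, then verify the hypotheses of Theorem \ref{Thm:CA_T_Inv} using the decompositions $\ssp{W}^k=\ssp{W}^k_\phi+\ssp{W}^k_{\mr{f}}$ and the fact that all but finitely many eigenvalues of a regular RS operator are simple. Where you genuinely diverge is at the step you yourself flag as the crux. The paper argues via uniqueness of Riesz-basis representations (Lemma \ref{Lem:RSUniqueRepresentation}) that $\ssp{W}^1_\phi\cap\ssp{W}^2_\phi$ is again of the form \eqref{Eq:EigeSpan} up to a finite-dimensional correction, and then merely asserts (``it can be shown that \ldots'') that the full intersection equals this spectral part plus a finite-dimensional subspace of $\dom{A}$; it also asserts, rather than proves, the density of $\dom{A}$ in $\ssp{W}^1\cap\ssp{W}^2$ needed earlier to invoke Lemma \ref{Lem:CA_equalency}. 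You instead supply an actual argument for both points: finite dimensionality of $\ssp{W}/\ssp{W}_\phi$ via the pair of projections $(P_1,P_2)$, and the existence of a complement inside $\dom{A}$ via the summability estimate $\sum_i|\lambda_i|^2\,\mr{dist}(\op{P}_ix,\ssp{G}_i)^2<\infty$, which you obtain from the $1$-Lipschitz property of the distance function, the two decompositions $x=a_k+b_k$ with $b_k\in\dom{A}$, and the dichotomy $\ssp{E}^k_i\in\{0,\ssp{P}_i\}$ at simple eigenvalues (Remark \ref{Rem:SubEig_SimpleEig}). What your route buys is a complete proof of exactly the claim the paper leaves implicit, at the modest cost of invoking the spectral characterization of $\dom{A}$ for regular RS operators; the paper's route is shorter but rests on an unproved assertion at the same spot.
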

\begin{proof}
	Consider $\fld{T}$-conditioned invariant subspaces $\ssp{W}_1$ and $\ssp{W}_2$ containing $\ssp{L}$. Hence, $\op{A}(\ssp{W}_1\cap\ker \op{C}\cap\dom{A})\subseteq \ssp{W}_1$ and $\op{A}(\ssp{W}_2\cap\ker \op{C}\cap\dom{A})\subseteq \ssp{W}_2$, and consequently $\op{A}(\ssp{W}_1\cap \ssp{W}_2\cap\ker \op{C}\cap\dom{A})\subseteq \ssp{W}_1\cap\ssp{W}_2$. Also, given that $\ssp{W}_1$ and $\ssp{W}_2$ are closed, so does the subspace $\ssp{W}_1\cap\ssp{W}_2$. Therefore, $\ssp{W}_1\cap\ssp{W}_2$ is \CA-invariant. Moreover, $\ssp{W}_1\cap\ssp{W}_2\cap D(\op{A})$ is dense in $\ssp{W}_1\cap\ssp{W}_2$. Consequently, $\ssp{W}_1\cap\ssp{W}_2$ is feedback \CA-invariant (refer to Lemma \ref{Lem:CA_equalency}).
	
	By invoking Theorem \ref{Thm:CA_T_Inv}, let $\ssp{W}_{1}=\ssp{W}_{\phi_{1}}+\ssp{W}_{\mathrm{f_1}}, ~\ssp{W}_{2}=\ssp{W}_{\phi_{2}}+\ssp{W}_{\mathrm{f_2}}$ with $\ssp{W}_{\phi_{k}} = \overline{\spanset{\ssp{E}_i}_{i\in\fld{I}_{k}}},~k=1,2$, where we have $\ssp{W}_k= \eigssp{\fld{I}_k}+\ssp{W}_{\mr{f}_k}$, for $k=1,2$ ($\ssp{W}_{\mr{f}_k}\subset\dom{A}$  denotes two Fin-D subspaces - refer to Remark \ref{Rem:TCon_A-Inv_Fin}). Now, we show that $\ssp{W}_1\cap\ssp{W}_2$ can be represented  by $\eigssp{\fld{I}_3}+\ssp{W}_{\mr{f}_3}$. Let $x\in\eigssp{\fld{I}_1}\cap\eigssp{\fld{I}_2}$. Therefore, $x$ can be expressed as
	\begin{equation}
	\begin{split}
	x &= \sum_{i} \zeta_i^1\phi_i^1
	= \sum_{i} \zeta_i^2\phi_i^2,
	\end{split}
	\end{equation}
	where $\phi_i^1$ and $\phi_i^2$ denote the generalized eigenvectors  that span the subspaces $\eigssp{\fld{I}_1}$ and $\eigssp{\fld{I}_2}$, respectively. Since $\op{A}$ is a regular RS operator (i.e., only finitely many eigenvalues are repeated), therefore all but finitely many of the eigenspaces and the corresponding sub-eigenspace are equivalent. In other words, there are finitely many (generalized) eigenvectors corresponding to the same eigenvalue, and there are infinite eigenvectors for distinct eigenvalues (refer to Remark \ref{Rem:SubEig_SimpleEig}). By invoking Lemma \ref{Lem:RSUniqueRepresentation} (i.e., a unique representation of $x$), the fact that the (generalized) eigenvectors are independent,  it follows that $\eigssp{\fld{I}_1}\cap\eigssp{\fld{I}_2} = \eigssp{\fld{I}_3}+\ssp{W}_\mr{f_3}$, where $\ssp{W}_\mr{f_3}\subset\dom{A}$ (since $\ssp{E}_i\subset\dom{A}$) is a Fin-D subspace. Finally, given that $\ssp{W}_{\mr{f}_1}\subset\dom{A}$ and $\ssp{W}_{\mr{f}_2}\subset\dom{A}$ are Fin-D subspaces, it can be shown that $\ssp{W}_1\cap\ssp{W}_2 = \eigssp{\fld{I}_3}+\ssp{W}_{\mr{f}_4}$, where $\ssp{W}_{f_4}\subset\dom{A}$ is a Fin-D subspace. Hence, by invoking Theorem \ref{Thm:CA_T_Inv}, it follows that $\ssp{W}_1\cap\ssp{W}_2$ is a $\fld{T}$-conditioned invariant subspace. This completes the proof of the lemma.
\end{proof}
As shown in \cite{Curtain_Invariance_1986}, the smallest $\fld{T}$-conditioned invariant subspace containing  $\ssp{L}$ may not exist for a general Inf-D operator $\op{A}$. However, the fact that all but only finitely many eigenvalues of $\op{A}$ are simple plays a crucial role in the above proof to ensure that $\ssp{W}_{\mr{f}_3}\subset\dom{A}$.

We are now in a position to introduce our proposed algorithm for computing the smallest $\fld{T}$-conditioned invariant subspace containing a given subspace. The algorithm for computing the smallest \CA-invariant subspace containing a given subspace $\ssp{L}$ is given by \cite{Curtain_Invariance_1986}, namely
\begin{equation}\label{Eq:CAInvAlg_Oreginal}
\begin{split}
\ssp{W}^0 &=\ssp{L} ,\;\;
\ssp{W}^k =\overline{\ssp{L}+\op{A}(\ssp{W}^{k-1}\cap\ker\op{C}\cap\dom{A})}.
\end{split}
\end{equation}
As pointed out in \cite{Curtain_Invariance_1986}, the limit of the above algorithm may be a non-closed subspace, and consequently, it is {not conditioned invariant} in the sense of Definition \ref{Def:ConditionInv}.
Below, we now provide an algorithm that computes the minimum $\fld{T}$-conditioned invariant subspace in a \emph{finite}  number of steps provided that the subspace $\ssp{N}_\op{A} = \bigcap_{n\in\underline{\fld{N}}} \ker\op{C}\op{A}^n$, which denotes the $\op{A}$-unobservable subspace of the system \eqref{Eq:GeneralID}, is known.
\begin{theorem}\label{Thm:CAInvAlg}
	Consider the RS system \eqref{Eq:GeneralID} and a given Fin-D subspace $\ssp{L}\subset\dom{A}$ and $\ssp{L}\cap\ker\op{C}\subset D(\op{A}^\infty)$, where $D(\op{A}^\infty)=\bigcap_{k=1}^\infty D(\op{A}^k)$  that is decomposed into disjoint subspaces $\ssp{L}= \ssp{L}_{\ssp{N}^\perp} + \ssp{L}_\ssp{N}$, such that $\ssp{L}_{\ssp{N}^\perp}\cap\ssp{N}_\op{A}=0$ and $\ssp{L}_\ssp{N}=\ssp{L}\cap\ssp{N}_\op{A}$. The smallest $\fld{T}$-conditioned invariant subspace containing $\ssp{L}$ (as denoted by $\ssp{W}^*$) is given by $\ssp{W}^* = \ssp{W}_\ell+ \ssp{Z}^*$, where $\ssp{Z}^*$ is the limiting subspace of the following algorithm
	\begin{equation}\label{Eq:T-CondSpaceAlg}
	\begin{split}
	\ssp{Z}_0 &=\ssp{L}_{\ssp{N}^\bot},\;\;
	\ssp{Z}_k = \ssp{L}_{\ssp{N}^\bot}+ \op{A}(\ssp{Z}_{k-1}\cap\ker\op{C}\cap\dom{A}),\\
	\end{split}
	\end{equation}
	and $\ssp{W}_\ell=\eigssp{\fld{J}}$ denotes the smallest subspace in the form of  \eqref{Eq:EigeSpan} (sum of the sub-eigenspaces of $\op{A}$) such that $\ssp{L}_\ssp{N}\subseteq\ssp{W}_\ell$. Moreover, the above algorithm converges in a finite number of steps.
\end{theorem}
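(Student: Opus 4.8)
The plan is to prove the identity $\ssp{W}^*=\ssp{W}_\ell+\ssp{Z}^*$ by a double inclusion against the smallest $\fld{T}$-conditioned invariant subspace containing $\ssp{L}$ (which exists by Lemma \ref{Lem:TCA-Inve_CloseIntersect}), and to treat finite termination separately. Before either inclusion I would isolate the single fact that drives the whole argument: that the unobservable part of the seed is genuinely $\SG{T}{A}$-unobservable, i.e. $\ssp{L}_\ssp{N}\subseteq\ssp{N}$ and consequently $\ssp{W}_\ell\subseteq\ssp{N}\subseteq\ker\op{C}$. Since $\ssp{L}_\ssp{N}=\ssp{L}\cap\ssp{N}_\op{A}\subseteq\ssp{N}_\op{A}\cap D(\op{A}^\infty)$, every $x\in\ssp{L}_\ssp{N}$ satisfies the moment conditions $\op{C}\op{A}^n x=0$ for all $n$; expanding $x=\sum_i\op{P}_i x$ in the Riesz basis of (generalized) eigenvectors and using these conditions, I would show (via a Vandermonde/analyticity argument on $t\mapsto\op{C}\SG{T}{A}(t)x=\sum_i\op{C}\op{P}_i x\,e^{\lambda_i t}$, whose derivatives at $0$ all vanish) that $\op{C}\op{P}_i x=0$ for every $i$. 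Hence each sub-eigenspace generated by $\op{P}_i x$ lies in $\ker\op{C}$, so $\ssp{W}_\ell$ is a sum of sub-eigenspaces inside $\ker\op{C}$, which by Corollary \ref{Col:UnObs_Span} is contained in $\ssp{N}$. I expect this spectral step to be the \emph{main obstacle}, because with infinitely many distinct eigenvalues the moment conditions force $\op{C}\op{P}_i x=0$ only after invoking the decay/growth estimates built into the regular RS structure together with $x\in D(\op{A}^\infty)$.

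For the upper inclusion I would verify that $\ssp{W}^*=\ssp{W}_\ell+\ssp{Z}^*$ is $\fld{T}$-conditioned invariant and contains $\ssp{L}$. Containment is immediate: $\ssp{L}_{\ssp{N}^\bot}=\ssp{Z}_0\subseteq\ssp{Z}^*$ and $\ssp{L}_\ssp{N}\subseteq\ssp{W}_\ell$ by definition. For invariance I would invoke Theorem \ref{Thm:CA_T_Inv}: $\ssp{W}_\ell=\eigssp{\fld{J}}$ is a sum of sub-eigenspaces and $\ssp{Z}^*$ is finite-dimensional (by the termination argument below), so $\ssp{W}^*$ already has the required form \eqref{Eq:EigeSpan} plus a finite-dimensional part, it is closed by Lemma \ref{Lem:SumClosedSpace}, and $\overline{\dom{A}\cap\ssp{W}^*}=\ssp{W}^*$ since $\ssp{Z}^*\subseteq\dom{A}$ and $\overline{\dom{A}\cap\ssp{W}_\ell}=\ssp{W}_\ell$. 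It then remains to check \CA-invariance, and here the fact $\ssp{W}_\ell\subseteq\ker\op{C}$ is decisive: for $x=x_\ell+x_z\in\ssp{W}^*\cap\ker\op{C}\cap\dom{A}$ with $x_\ell\in\ssp{W}_\ell$ and $x_z\in\ssp{Z}^*$ one has $\op{C}x_\ell=0$, hence $\op{C}x_z=0$; then $\op{A}x_\ell\in\ssp{W}_\ell$ and $\op{A}x_z\in\op{A}(\ssp{Z}^*\cap\ker\op{C}\cap\dom{A})\subseteq\ssp{Z}^*$ because $\ssp{Z}^*$ is the fixed point of \eqref{Eq:T-CondSpaceAlg}, so $\op{A}x\in\ssp{W}^*$. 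Theorem \ref{Thm:CA_T_Inv} then yields that $\ssp{W}^*$ is $\fld{T}$-conditioned invariant.

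For the lower inclusion, let $\ssp{W}$ be any $\fld{T}$-conditioned invariant subspace containing $\ssp{L}$. Since $\ssp{W}$ is \CA-invariant and $\ssp{L}_{\ssp{N}^\bot}\subseteq\ssp{W}$, a routine induction on \eqref{Eq:T-CondSpaceAlg} (if $\ssp{Z}_{k-1}\subseteq\ssp{W}$ then $\op{A}(\ssp{Z}_{k-1}\cap\ker\op{C}\cap\dom{A})\subseteq\ssp{W}$) gives $\ssp{Z}_k\subseteq\ssp{W}$ for all $k$, hence $\ssp{Z}^*\subseteq\ssp{W}$. For the eigenspace part I would use the spectral fact above: since $\ssp{L}_\ssp{N}\subseteq\ssp{N}\subseteq\ker\op{C}$, any friend $\op{D}$ of $\ssp{W}$ is inactive along the $\SG{T}{A}$-trajectory of $\ssp{L}_\ssp{N}$, so $\SG{T}{A+DC}(t)x=\SG{T}{A}(t)x$ for $x\in\ssp{L}_\ssp{N}$, and $\SG{T}{A+DC}$-invariance of $\ssp{W}$ forces the whole $\SG{T}{A}$-orbit of $\ssp{L}_\ssp{N}$ into $\ssp{W}$. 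The smallest $\SG{T}{A}$-invariant subspace containing $\ssp{L}_\ssp{N}$ is, by Corollary \ref{Col:A-T-Inv}, exactly the smallest sum of sub-eigenspaces containing it, namely $\ssp{W}_\ell$; thus $\ssp{W}_\ell\subseteq\ssp{W}$ and therefore $\ssp{W}^*=\ssp{W}_\ell+\ssp{Z}^*\subseteq\ssp{W}$, establishing minimality.

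Finally, for finite termination I would argue that the increments $\delta_k=\dim(\ssp{Z}_k/\ssp{Z}_{k-1})$ are non-increasing: each $\ssp{Z}_k$ stays finite-dimensional and inside $\dom{A}$ (the hypothesis $\ssp{L}\cap\ker\op{C}\subset D(\op{A}^\infty)$ guarantees the iterated images remain in the domain), and since intersecting with $\ker\op{C}$ and applying $\op{A}$ can only decrease dimension one gets $\delta_k\le\dim((\ssp{Z}_{k-1}\cap\ker\op{C})/(\ssp{Z}_{k-2}\cap\ker\op{C}))\le\delta_{k-1}$. If the algorithm failed to terminate, the increments would stabilise at a positive value, producing an infinite $\op{A}$-chain remaining in $\ker\op{C}$, that is a nonzero sub-eigenspace inside $\ker\op{C}\subseteq\ssp{N}_\op{A}$ reachable from the seed; this contradicts both $\ssp{L}_{\ssp{N}^\bot}\cap\ssp{N}_\op{A}=0$ and the regular RS structure, in which generalized eigenspaces are finite-dimensional and only finitely many eigenvalues are repeated, so no infinite Jordan-type chain can persist. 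Hence some $\delta_{k^*}=0$, $\ssp{Z}^*=\ssp{Z}_{k^*}$ is finite-dimensional, and the algorithm converges in finitely many steps, closing the circularity with the upper-inclusion step.
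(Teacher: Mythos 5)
Your overall architecture (double inclusion plus a separate termination argument, with $\ssp{Z}^*\subseteq\ssp{W}$ obtained by induction on the algorithm and termination obtained by contradiction with $\ssp{L}_{\ssp{N}^\perp}\cap\ssp{N}_\op{A}=0$) matches the paper. But the single fact you build everything on --- that $\ssp{L}_\ssp{N}\subseteq\ssp{N}$, hence $\ssp{W}_\ell\subseteq\ker\op{C}$ --- is a genuine gap, and the route you propose for it does not go through. From $x\in\ssp{N}_\op{A}\cap D(\op{A}^\infty)$ you only get that all derivatives of $t\mapsto\op{C}\SG{T}{A}(t)x$ vanish at $t=0$; this forces the function to vanish identically only if it is analytic, and a regular RS operator need not generate an analytic semigroup (well-posedness only requires $\sup\mathrm{Re}\,\lambda_i<\infty$, so the eigenvalues need not lie in a sector). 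The algebraic version fares no better: the moment conditions $\sum_i\op{C}\op{P}_ix\,\lambda_i^n=0$ for all $n$ do \emph{not} imply $\op{C}\op{P}_ix=0$ term by term when there are infinitely many eigenvalues --- indeterminate discrete moment problems supply nonzero sequences $(b_i)$ with $\sum_ib_i\lambda_i^n=0$ for every $n$, and such examples can be arranged inside the regular RS class (e.g.\ $\lambda_i\to-\infty$ fast enough that the separation condition of Remark \ref{Rem:CondOnDefRegRSOp} holds). This is exactly why the paper stresses that $\ssp{N}_\op{A}$ ``is not necessarily $\SG{T}{A}$-invariant'': in general $\ssp{N}_\op{A}\cap D(\op{A}^\infty)\not\subseteq\ssp{N}$, so your claim is not merely unproved but false as stated, and with it both your verification of \CA-invariance of $\ssp{W}_\ell+\ssp{Z}^*$ (which uses $\op{C}x_\ell=0$ to conclude $\op{C}x_z=0$) and your lower inclusion $\ssp{W}_\ell\subseteq\ssp{W}$ (which needs the friend $\op{D}$ to be ``inactive'' along the whole $\SG{T}{A}$-orbit of $\ssp{L}_\ssp{N}$) lose their foundation.

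The paper avoids this spectral claim entirely. For minimality it decomposes the competitor $\ssp{W}=\ssp{W}_\phi+\ssp{W}_\mr{f}$ as in Theorem \ref{Thm:CA_T_Inv} with $\op{DC}\ssp{W}_\phi=0$ and shows $\ssp{L}_\ssp{N}\subseteq\ssp{W}_\phi$ by a finite-dimensional pigeonhole: if some $x\in\ssp{L}_\ssp{N}$ had a nonzero component in $\ssp{W}_\mr{f}$, then $(\lambda\op{I}-\op{A})^kx\in\ker\op{C}\cap\ssp{W}_\mr{f}$ for all $k$ would generate an $\op{A}$-invariant chain inside the \emph{finite-dimensional} subspace $\ssp{W}_\mr{f}$, producing a sub-eigenspace of $\op{A}$ in $\ssp{W}_\mr{f}$ and contradicting the maximality of $\ssp{W}_\phi$; minimality of $\ssp{W}_\ell$ among sums of sub-eigenspaces containing $\ssp{L}_\ssp{N}$ then gives $\ssp{W}_\ell\subseteq\ssp{W}_\phi\subseteq\ssp{W}$. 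No assertion that $\ssp{W}_\ell\subseteq\ker\op{C}$ is made or needed. If you want to salvage your write-up, replace your ``key fact'' and its two uses by this finite-dimensional contradiction argument; your treatment of $\ssp{Z}^*$ and of termination can then stand essentially as written.
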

\begin{proof}
	First, we show that this algorithm converges in a finite number of steps by contradiction. Assume that there exists at least a vector $x\in\ssp{L}_{\ssp{N}^\bot}\cap D(\op{A}^\infty)$ such that $\op{A}^nx\subseteq\ker\op{C}$ and $\op{A}^nx$ are independent vectors for all $n$. Otherwise, there is an $n_0$ such that $\op{A}^{n_0}x\notin\ker\op{C}$ for all  $x\in\ssp{L}_{\ssp{N}^\perp}$. Therefore, $(\ssp{Z}_{n_0+1}\cap\ker\op{C}\cap\dom{A}) =(\ssp{Z}_{n_0}\cap\ker\op{C}\cap\dom{A})$, and consequently we obtain $\ssp{Z}_{n_0+2} = \ssp{Z}_{n_0+1}$. Consequently, the above algorithm converges in a finite number of steps. Since $\ker\op{C}$ is a closed subspace, we have $\op{A}^nx\in\ker\op{C}$ for all $n\in\fld{N}$ and $\lim_{n\rightarrow\infty}\op{A}^nx\in\ker\op{C}$ (if $\lim_{n\rightarrow\infty}\op{A}^nx$ exists), and consequently $x\in\ssp{N}_\op{A}$, which is in contradiction with the fact that $\ssp{L}_{\ssp{N}^\bot}\cap\ssp{N}_\op{A} = 0$. Therefore, there exists a $k\in\fld{N}$ such that $\ssp{Z}^* = \ssp{Z}_k$. Moreover, since $\ssp{L}\cap\ker\op{C}\subset D(\op{A}^\infty)$, it follows that $\ssp{Z}^*\subset\dom{A}$.
	
	Second,  since $\ssp{L}$ is  Fin-D it follows that $\dim(\ssp{Z}^*)<\infty$. By considering the definition of $\ssp{W}_\ell$, we obtain $\overline{\ssp{W}^*\cap\dom{A}}=\ssp{W}^*$, and by invoking Theorem \ref{Thm:CA_T_Inv}, it follows that $\ssp{W}^*$ is a $\fld{T}$-conditioned invariant subspace.
	
	Finally,  we show that $\ssp{W}^*$ is the smallest $\fld{T}$-conditioned invariant subspace. Consider a $\fld{T}$-conditioned invariant subspace $\ssp{W}$ such that $\ssp{L}\subseteq\ssp{W}$.
	Given that $\ssp{W}$ is $\fld{T}$-conditioned invariant and $\op{A+DC}$ is a regular RS operator (refer to Remark \ref{Rem:CondOnDefRegRSOp}), $\ssp{W} = \overline{\spanset{\ssp{E}_i^D}_{i\in\fld{I}}}$, where $\fld{I}\subseteq\fld{N}$ and $\ssp{E}_i^D$ is a sub-eigenspace of $\op{A+DC}$.
	%Consider $x\in\ssp{L}_\ssp{N}$ and the sub-eigenspace $\ssp{E}_i\subseteq\ssp{N}_\op{A}$. Since $\SG{T}{A+DC}x\in\ssp{W}$, and given the fact that $\ssp{E}_i$ is also a sub-eigenspace of $\op{A+DC}$, it follows that $\ssp{E}_i\subseteq\ssp{W}$ provided $x\not\perp\ssp{E}_i$, and
	Next, we  show that $(\ssp{W}_\ell+\ssp{L})\subseteq\ssp{W}$. Towards this end,  let $\op{D}$ be the injection operator that is defined as in the proof of Theorem \ref{Thm:CA_T_Inv}, where $\ssp{W}=\ssp{W}_\phi+\ssp{W}_\mr{f}$ and $\op{DC}\ssp{W}_\phi=0$. Also, following along the above one can assume that there is no sub-eigenspace $\ssp{E}$ of $\op{A}$ such that $\ssp{E}\subset\ssp{W}_\mr{f}$ (i.e., $\ssp{W}_\phi$ is the largest subspace in the form \eqref{Eq:EigeSpan} that is contained in $\ssp{W}$).  Since  $\ssp{L}_\ssp{N}\subseteq\ssp{N}_\op{A}$, and consequently $\ssp{L}_\ssp{N}\subset D(\op{A}^\infty)$, it follows that $(\lambda\op{I-A})^k\ssp{L}_\ssp{N}=(\lambda\op{I-(A+DC)})^k\ssp{L}_\ssp{N}\subset\ker\op{C}$ for all $k\in\fld{N}$. Therefore, $\ssp{L}_\ssp{N}\subseteq\ssp{W}_\phi$. Otherwise, if $\ssp{L}_\ssp{N}\cap\ssp{W}_\mr{f}\neq0$, there exists an $x\in\ssp{L}_\ssp{N}\cap\ssp{W}_\mr{f}$ such that $(\lambda\op{I-A})^kx\in\ker\op{C}\cap\ssp{W}_\mr{f}$ for all $k\in\fld{N}$ (recall that $\ssp{W}_\mr{f}$ is \CA-invariant). Since, $\ssp{W}_\mr{f}$ is Fin-D, it follows that there exists a sub-eigenspace that is contained in $\ssp{W}_\mr{f}$, and this is in contradiction with the definition of $\ssp{W}_\mr{f}$. Since $\ssp{W}_\ell$ is the smallest subspace in the form of  \eqref{Eq:EigeSpan} such that $\ssp{L}_\ssp{N}\subseteq\ssp{W}_\ell$, it follows that $\ssp{W}_\ell\subset\ssp{W}_\phi$. Furthermore, given that we assume $\ssp{L}\subseteq\ssp{W}$,  we obtain $(\ssp{W}_\ell+\ssp{L})\subseteq\ssp{W}$. Now, since the algorithm is increasing and starts from $\ssp{L}_{\ssp{N}^\perp}\subseteq\ssp{L}\subseteq\ssp{W}$, we obtain $\ssp{Z}_k\subseteq\ssp{W}$, and consequently $\ssp{W}^*\subseteq\ssp{W}$. It follows that $\ssp{W}^*$ is the smallest $\fld{T}$-conditioned invariant subspace containing $\ssp{L}$. This completes the proof of the lemma.
\end{proof}
It should be pointed out that one can compute $\ssp{W}_\ell$ as follows.
\begin{enumerate}
	\item Let $\op{X}_\text{inf}=\overline{\spanset{\ssp{E}_i}_{i\in\fld{J}_s}}$ and $\op{X}_\text{f} = \spanset{\ssp{E}_j}_{j\in\fld{J}_m}$, where $\fld{J}_s$ and $\fld{J}_m$ denote the index sets for simple and multiple (or repeated) eigenvalues, respectively. Also, $\ssp{E}_i$'s and  $\ssp{E}_j$'s denote the sub-eigenspaces that correspond to the simple and multiple (or repeated) eigenvalues, respectively (note that $\dim(\op{X}_\text{f})<\infty$).
	\item Compute, $\ssp{W}_\ell^m$, the smallest sub-eigenspace in $\op{X}_\text{f}$ containing $\op{P}_{\text{f}}\ssp{L}_\ssp{N}$, where $\op{P}_\text{f}$ denotes the projection from $\op{X}$ onto $\op{X}_\text{f}$. It follows that $\ssp{W}_\ell^m=\spanset{\phi_k}_{k\in\fld{I}_m}$, where $\fld{I}_m\subseteq\fld{J}_m$,  and therefore $\dim(\ssp{W}_\ell^m)<\infty$.
	\item Let $\ssp{W}_\ell^s = \overline{\spanset{\ssp{E}_k}_{k\in\fld{I}_s}}$, where $\fld{I}_s\subseteq\fld{J}_s$ and the eigenvector $\phi_k\in\ssp{E}_k$ (that corresponds to $\lambda_k$) does appear in the representation of at least one  member of $\ssp{L}_{\ssp{N}}$ (refer to Lemma \ref{Lem:RSUniqueRepresentation}).
	\item Set $\ssp{W}_\ell = \ssp{W}_\ell^s+\ssp{W}_\ell^m$.
\end{enumerate}

\vspace{-2mm}
\subsection{Unobservability Subspace}\label{Sec:UnObse}
In the geometric FDI approach, one needs to work with another invariant subspace known as the unobservability subspace. In this subsection, we first provide two definitions for this subspace, and then develop an algorithm to construct it computationally. %Also, we show that for a subclass of RS systems this algorithm converges in a finite number of steps.

\begin{definition}\label{Def:UnobservabilitySpace}\
	\begin{enumerate}
		\item The subspace $\op{S}$ is called an $\op{A}$-unobservability subspace for the RS system \eqref{Eq:GeneralID}, if there exist two bounded operators $\op{D}:\fld{R}^q\rightarrow \op{X}$ and $H:\fld{R}^q\rightarrow \fld{R}^{q_h}$, where $q_h\leq q$, such that $\op{S}$ is the largest $\op{A+DC}$-invariant subspace contained in $\ker H\op{C}$ (i.e., $\op{S}=<\ker H\op{C}|\op{A+DC}>$).
		\item The subspace $\op{S}$ is called an unobservability subspace for the RS system \eqref{Eq:GeneralID}, if there exist two bounded operators $\op{D}:\fld{R}^q\rightarrow \op{X}$ and $H:\fld{R}^q\rightarrow \fld{R}^{q_h}$,  where $q_h\leq q$, such that $\op{S}$ is the largest $\SG{T}{A+DC}$-invariant subspace contained in $\ker H\op{C}$ (i.e., $\op{S}=<\ker HC|\SG{T}{A+DC}>$).
	\end{enumerate}
\end{definition}
\begin{remark}\label{Rem:Unobs}
	It follows that the $\op{A}$- and unobservability subspaces are the $\op{A}$- and unobservable  subspaces of the pair ($\op{HC}$,$\op{A+DC}$), respectively. Also, by definition $\op{A}$- and unobservability subspaces are also feedback \CA- and $\fld{T}$-conditioned invariant, respectively.
\end{remark}

\noindent
\underline{\textbf{The Unobservability Subspace Computing Algorithm:}}
As stated earlier, for the FDI problem one is interested in computing the smallest unobservability subspace containing a given subspace. % However, for computing the minimal unobservability subspace one needs first to compute the minimal conditioned invariant subspace\cite{Mass_Thesis}. Consider the unobservability subspace $\op{S}=<\ker H\op{C}|\SG{T}{G}>$.
By following along the same lines as in Lemma \ref{Lem:TCA-Inve_CloseIntersect}, and the fact that $\op{A+DC}$ is a regular operator, and finally by invoking Remark \ref{Rem:Unobs}, one can show that the set of all unobservablity subspaces containing a given subspace always admits a minimum in the inclusion sense. In the Fin-D case, the unobservability subspace computing algorithm involves the inverse image of certain subspaces with respect to the state dynamic operator (i.e., the operator $A$) \cite{Mass_Thesis} (equation 2.61). However, for Inf-D systems, it is not convenient to deal with the inverse image of $\op{A}$ (if $0\not\in\roi{A}$). To overcome this difficulty, one can compute the unobservability subspace by using its dual subspace which is the controllability subspace. Therefore, one needs to compute the adjoint operators of $\op{A}$ and $\op{C}$ as was pointed out in \cite{ECC2014}.

The method in \cite{ECC2014} uses a non-decreasing algorithm that converges in a countable number of steps. However, since the algorithm is non-decreasing, the limiting subspace is not necessarily closed. Another approach for computing the unobservability subspace would be to use the resolvent operator $(\lambda\op{I}-\op{A})^{-1}$. This approach is more feasible given that one deals with $\fld{T}$-conditioned invariant subspaces and with $(\lambda\op{I}-\op{A})^{-1}$, which is a bounded operator. Moreover, the corresponding algorithm will be non-increasing and converges in a countable number of steps. Consequently, this will ensure that the limiting subspace will be closed \cite{Curtain_Invariance_1986}.  The following theorem provides  an approach to compute the smallest unobservability subspace containing a given Fin-D subspace $\ssp{L}$.

\begin{theorem}\label{Thm:UnobsAlg-Countable}
	Consider the model \eqref{Eq:GeneralID} which is assumed to be a  regular RS system and a given Fin-D subspace $\ssp{L}\subset \dom{A}$. Let $\ssp{W}^*$ denote the smallest $\fld{T}$-conditioned invariant subspace containing  $\ssp{L}$, where $\ssp{W}^* = \ssp{W}_\phi^*+\ssp{W}_\mr{f}^*$ (from Theorem \ref{Thm:CA_T_Inv}), $\ssp{W}_\phi^*$ denote the subspace contained in $\ssp{W}^*$ in the form \eqref{Eq:EigeSpan} and $\ssp{W}_\mr{f}^*\subset\dom{A}$ denote a Fin-D subspace. The smallest unobservability subspace containing $\ssp{L}$ (denoted by $\op{S}^*$) is given by
	\begin{equation}\label{Eq:T-UnobSpaceAlg}
	\begin{split}
	\op{S}^* = \overline{\ssp{W}_\phi^*+\ssp{N}}+\ssp{W}_{\phi,\mr{f}}^*,
	\end{split}
	\end{equation}
	in which $\ssp{N}$ is the unobservable subspace of \CA, $\ssp{W}_{\phi,\mr{f}}^*$ is the largest subspace in the form of $\overline{\spanset{\ssp{E}_i^D}_{i\in\fld{I}_D}}$ such that $\ssp{W}_{\phi,\mr{f}}^*$ contains $\ssp{W}_\mr{f}^*$ and is contained in $\overline{\ssp{W}^*+\ker\op{C}}$. Also, $\ssp{E}_i^D$'s denote the sub-eigenspaces of  $(\op{A+DC})$.
\end{theorem}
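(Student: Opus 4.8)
The plan is to exhibit the right-hand side of \eqref{Eq:T-UnobSpaceAlg} as a concrete unobservability subspace built from the friend of $\ssp{W}^*$, and then to verify separately that it contains $\ssp{L}$ and that it sits inside every unobservability subspace containing $\ssp{L}$. Throughout I fix the bounded operator $\op{D}$ constructed in the proof of Theorem \ref{Thm:CA_T_Inv}, i.e. a friend of $\ssp{W}^*$ satisfying $(\op{A+DC})(\ssp{W}^*\cap\dom{A})\subseteq\ssp{W}^*$ and $\op{DC}\ssp{W}_\phi^*=0$, so that by Corollary \ref{Col:A-T-Inv} the subspace $\ssp{W}^*$ is a sum of sub-eigenspaces of $\op{A+DC}$. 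Since $\op{C}$ has finite rank, $\ker\op{C}$ has finite codimension, so $\overline{\ssp{W}^*+\ker\op{C}}$ is closed, contains $\ker\op{C}$, and has finite codimension; consequently there is a finite-rank map $H:\fld{R}^q\rightarrow\fld{R}^{q_h}$ with $\ker H\op{C}=\overline{\ssp{W}^*+\ker\op{C}}$. Taking $\op{S}^*$ to be the largest $\SG{T}{A+DC}$-invariant subspace contained in $\ker H\op{C}$ makes $\op{S}^*$ an unobservability subspace by Definition \ref{Def:UnobservabilitySpace}, item 2), and the whole argument then reduces to identifying this $\op{S}^*$ with the formula and proving minimality.

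To identify $\op{S}^*$ with \eqref{Eq:T-UnobSpaceAlg}, I would use Corollary \ref{Col:A-T-Inv} to write $\op{S}^*$ as a closed sum of sub-eigenspaces $\ssp{E}_i^D$ of $\op{A+DC}$, and sort these according to their position relative to $\ker\op{C}$. A sub-eigenspace lying entirely in $\ker\op{C}$ is automatically a sub-eigenspace of $\op{A}$ (since $\op{DC}$ vanishes there), so by Corollary \ref{Col:UnObs_Span} the span of all such pieces is exactly the unobservable subspace $\ssp{N}$; together with the sub-eigenspaces inside $\ssp{W}_\phi^*$ (where again $\op{DC}=0$) they account for $\overline{\ssp{W}_\phi^*+\ssp{N}}$. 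The remaining sub-eigenspaces must leave $\ker\op{C}$, and because $\ker\op{C}$ has finite codimension in $\overline{\ssp{W}^*+\ker\op{C}}$ there are only finitely many such directions; these finitely many are precisely what is required to cover $\ssp{W}_\mr{f}^*$, and they assemble into $\ssp{W}_{\phi,\mr{f}}^*$. Lemma \ref{Lem:SumClosedSpace} (the sum of a closed subspace and a finite-dimensional one is closed) then shows the total is closed and equals $\overline{\ssp{W}_\phi^*+\ssp{N}}+\ssp{W}_{\phi,\mr{f}}^*$. The containment $\ssp{L}\subseteq\op{S}^*$ is then immediate, since $\ssp{W}^*$ is $\SG{T}{A+DC}$-invariant and lies in $\ker H\op{C}$, whence $\ssp{L}\subseteq\ssp{W}^*\subseteq\op{S}^*$.

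For minimality, let $\op{S}$ be any unobservability subspace with $\ssp{L}\subseteq\op{S}$, realized by friends $\op{D}_S,H_S$ as $\op{S}=<\ker H_S\op{C}|\op{A+D}_S\op{C}>$, so that $\op{S}\subseteq\ker H_S\op{C}$ and $\ker\op{C}\subseteq\ker H_S\op{C}$. By Remark \ref{Rem:Unobs}, $\op{S}$ is $\fld{T}$-conditioned invariant and contains $\ssp{L}$, so minimality of $\ssp{W}^*$ gives $\ssp{W}^*\subseteq\op{S}$. Next, since $\ssp{N}\subseteq\ker\op{C}$ is a sum of sub-eigenspaces of $\op{A}$ on which $\op{D}_S\op{C}=0$, it is a sum of sub-eigenspaces of $\op{A+D}_S\op{C}$, hence $\fld{T}_{\op{A+D}_S\op{C}}$-invariant, and $\ssp{N}\subseteq\ker\op{C}\subseteq\ker H_S\op{C}$; maximality of $\op{S}$ gives $\ssp{N}\subseteq\op{S}$, and therefore $\overline{\ssp{W}_\phi^*+\ssp{N}}\subseteq\overline{\ssp{W}^*+\ssp{N}}\subseteq\op{S}$. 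It remains to place $\ssp{W}_{\phi,\mr{f}}^*$ inside $\op{S}$: from $\ssp{W}^*\subseteq\op{S}\subseteq\ker H_S\op{C}$ and $\ker\op{C}\subseteq\ker H_S\op{C}$ one gets $\ssp{W}_{\phi,\mr{f}}^*\subseteq\overline{\ssp{W}^*+\ker\op{C}}\subseteq\ker H_S\op{C}$, so it suffices to show that $\ssp{W}_{\phi,\mr{f}}^*$ is $\fld{T}_{\op{A+D}_S\op{C}}$-invariant, after which maximality of $\op{S}$ forces $\ssp{W}_{\phi,\mr{f}}^*\subseteq\op{S}$ and hence $\op{S}^*\subseteq\op{S}$.

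The hard part is exactly this last step, the friend-independence of the finite completion $\ssp{W}_{\phi,\mr{f}}^*$: it is defined through sub-eigenspaces of $\op{A+DC}$, whereas its invariance must be certified under the a priori unrelated generator $\op{A+D}_S\op{C}$. I would treat it as the friend-change argument in Lemma \ref{Lem:DiffOp4CondInv}: the two generators differ by $(\op{D}-\op{D}_S)\op{C}$, which vanishes on $\ker\op{C}$ and sends the finitely many non-kernel directions of $\ssp{W}_{\phi,\mr{f}}^*$ back into $\ssp{W}^*\subseteq\op{S}$; combining this with the resolvent characterization of semigroup invariance (Lemma \ref{Lem:A-1_T-Inv}) and the fact that $\op{A+D}_S\op{C}$ is again a regular RS operator (Remark \ref{Rem:CondOnDefRegRSOp}), one shows $(\lambda\op{I}-(\op{A+D}_S\op{C}))^{-1}\ssp{W}_{\phi,\mr{f}}^*\subseteq\ssp{W}_{\phi,\mr{f}}^*$ for $\lambda\in\rho_\infty(\op{A+D}_S\op{C})$, i.e. $\ssp{W}_{\phi,\mr{f}}^*$ is $\fld{T}_{\op{A+D}_S\op{C}}$-invariant. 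This finite-codimension and finite-rank bookkeeping, rather than any semigroup estimate, is where the genuine work lies.
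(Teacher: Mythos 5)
Your construction of $\op{S}^*$ is essentially the paper's: you fix the friend $\op{D}$ of $\ssp{W}^*$ with $\op{DC}\ssp{W}_\phi^*=0$, take $H$ with $\ker H\op{C}=\overline{\ssp{W}^*+\ker\op{C}}$, and identify $<\ker H\op{C}|\SG{T}{A+DC}>$ with $\overline{\ssp{W}_\phi^*+\ssp{N}}+\ssp{W}_{\phi,\mr{f}}^*$ by sorting the sub-eigenspaces of $\op{A+DC}$ according to whether they lie in $\ker\op{C}$. That part, together with $\ssp{L}\subseteq\ssp{W}^*\subseteq\op{S}^*$ and the inclusions $\ssp{W}^*\subseteq\op{S}$ and $\ssp{N}\subseteq\op{S}$ in the minimality step, is sound and matches the paper.

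The gap is exactly where you place it, and your proposed repair does not work. To get $\ssp{W}_{\phi,\mr{f}}^*\subseteq\op{S}$ you want to show that $\ssp{W}_{\phi,\mr{f}}^*$ is $\fld{T}_{\op{A+D}_S\op{C}}$-invariant and then use maximality of $\op{S}$ in $\ker H_S\op{C}$. But $\ssp{W}_{\phi,\mr{f}}^*$ is assembled from sub-eigenspaces of $\op{A+DC}$, and there is no reason it is invariant under the different generator $\op{A+D}_S\op{C}$: for $w\in\ssp{W}_{\phi,\mr{f}}^*$ with $\op{C}w\neq 0$, the vector $\op{D}_S\op{C}w$ lies in $\ima\op{D}_S$ and is not constrained to lie in $\ssp{W}_{\phi,\mr{f}}^*$, in $\ssp{W}^*$, or even in $\op{S}$ --- the only property of $\op{D}_S$ available is that it is a friend of $\op{S}$, and you cannot invoke $(\op{A+D}_S\op{C})w\in\op{S}$ without already knowing $w\in\op{S}$, which is the very thing to be proved. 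Your claim that $(\op{D}-\op{D}_S)\op{C}$ sends the non-kernel directions of $\ssp{W}_{\phi,\mr{f}}^*$ back into $\ssp{W}^*$ is therefore unjustified, and Lemma \ref{Lem:DiffOp4CondInv} cannot be applied because its hypothesis is precisely the algebraic invariance $(\op{A+D}_S\op{C})(\ssp{W}_{\phi,\mr{f}}^*\cap\dom{A})\subseteq\ssp{W}_{\phi,\mr{f}}^*$ that is missing. The paper circumvents the friend change entirely: it notes that $\op{S}$, being an unobservability subspace, is the largest $\fld{T}$-conditioned invariant subspace contained in $\overline{\op{S}+\ker\op{C}}=\ker H_1\op{C}$ (a friend-agnostic statement), that $\op{S}^*$ is $\fld{T}$-conditioned invariant with respect to its \emph{own} friend $\op{D}$ and hence of the form \eqref{Eq:TCA-InvCon} of Theorem \ref{Thm:CA_T_Inv}, and that $\op{S}^*\subseteq\overline{\ssp{W}^*+\ker\op{C}}\subseteq\ker H_1\op{C}$; the inclusion $\op{S}^*\subseteq\op{S}$ then follows without ever certifying invariance of any piece of $\op{S}^*$ under $\op{A+D}_S\op{C}$. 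You should replace your final step with this argument.
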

\begin{proof}
	Let us first show that $\op{S}^*$ is a $\fld{T}$-conditioned invariant subspace. Since $\ssp{N}$ is $\SG{T}{A}$-invariant, we obtain $\ssp{N} = \eigssp{\fld{I}}$, where $\ssp{E}_i$'s denote the sub-eigenspaces of $\op{A}$ (by using Corollary \ref{Col:A-T-Inv}). Let $\op{D}\in\underline{D}(\ssp{W}^*)$ that is constructed as in Theorem \ref{Thm:CA_T_Inv} (i.e., $\op{DC}\ssp{W}_\phi=0$ and $(\lambda\op{I-(A+DC)})^{-1}\ssp{W}^*\subseteq\ssp{W}^*$). Since $\ssp{N}\subseteq\ker\op{C}$, as shown above (in the proof of Theorem \ref{Thm:CA_T_Inv}) $\ssp{E}_i$'s are also sub-eigenspaces of $(\op{A+DC})$. Also, by definition, $\ssp{W}_{\phi,\mr{f}}^*$ is a sum of sub-eigenspaces of $(\op{A+DC})$. Therefore, $\op{S}^*$ is a sum of sub-eigenspaces of $(\op{A+DC})$ and by invoking Corollary \ref{Col:A-T-Inv}, it follows that $\op{S}^*$ is $\SG{T}{A+DC}$-invariant (i.e., $\fld{T}$-conditioned invariant).
	
	Second, let $H$ denote a map such that $\ker H\op{C} = \overline{\ssp{W}^*+\ker\op{C}}$ (one choice is $H:\fld{R}^q\rightarrow\fld{R}^{q_h}$, where $\ker H=\ssp{W}^*\cap(\ssp{W}^*\cap\ker\op{C})^\perp$). Since $\ssp{W}_{\phi,\mr{f}}^*\subseteq\overline{\ssp{W}^*+\ker\op{C}}$, and $\ssp{W}_\mr{f}^*\subseteq\ssp{W}_{\phi,\mr{f}}^*$, it follows that $\overline{\ssp{W}_\phi^*+\ker\op{C}}+\ssp{W}_{\phi,\mr{f}}^* = \overline{\ssp{W}^*+\ker\op{C}}$. Also, given that $\ssp{N}\subseteq\ker\op{C}$, we obtain  $\overline{\ssp{W}^*+\ker\op{C}}=\overline{\op{S}^*+\ker\op{C}}$, and consequently, we have $\op{S}^*\subseteq \ker H\op{C}$.
	
	Third, we show that $\op{S}^*$ is an unobservable subspace of the system ($H\op{C}$, $\op{A+DC}$). As shown above $\op{S}^* = \overline{\spanset{\ssp{E}_i^D}_{i\in\fld{I}}}$, where $\ssp{E}_i^D$ is a sub-eigenspace of $\op{A+DC}$. Next, it is shown that $\op{S}^*$ contains all sub-eigenspaces of $(\op{A+DC})$ that are contained in $\ker H\op{C}$. Let $\ssp{E}_0^D$ denote a given sub-eigenspace of $\op{A+DC}$, such that $\ssp{E}_0^D\subseteq\ker H\op{C}$. If $\ssp{E}_0^D\not\subseteq\ker\op{C}$, since $\ssp{W}_\phi^*+\ssp{W}_{\phi,\mr{f}}^*$ contains all sub-eigenspaces that may not be contained in $\ker\op{C}$ (recall the definition of $H$ and $\ssp{W}_{\phi,\mr{f}}^*$) but is contained  in $\ker H\op{C}$, we obtain  $\ssp{E}_0^D\subseteq(\ssp{W}_\phi^*+\ssp{W}_{\phi,\mr{f}}^*)\subseteq\op{S}^*$. Now, assume that $\ssp{E}_0^D\subseteq\ker\op{C}$. It follows that $(\lambda\op{I}-(\op{A+DC}))^{-1}\ssp{E}_0^D = (\lambda \op{I}-\op{A})^{-1}\ssp{E}_0^D\subseteq\ker\op{C}$,  and consequently, $\ssp{E}_0^D\subseteq\ssp{N}\subseteq\op{S}^*$. Hence, $\op{S}^*$ is the largest subspace contained in $\ker H\op{C}$  that is spanned by the sub-eigenspace of $\op{A+DC}$ (i.e., every sub-eigenspace in $\ker H\op{C}$ is contained in $\op{S}^*$).  Therefore, $\op{S}^*$ is the unobservable subspace of the pair ($H\op{C}$,$\op{A+DC}$).
	
	Finally, we show that $\op{S}^*$ is the smallest unobservability subspace containing $\ssp{L}$. Let $\op{S}$ denote another unobservability subspace containing $\ssp{L}$. Since $\op{S}$ is $\fld{T}$-conditioned invariant containing $\ssp{L}$, it follows that  $\ssp{W}^*\subseteq\op{S}$ ($\ssp{W}^*$ is the smallest $\fld{T}$-conditioned invariant containing $\ssp{L}$). Now, let $H_1$ be selected such that $\ker H_1\op{C} = \overline{\op{S}+\ker\op{C}}$. Since $\op{S}^*\subseteq\overline{\ssp{W}^*+\ker\op{C}}$, it follows $\op{S}^*\subseteq\ker H_1\op{C}$. Also, given that $\op{S}$ is the largest $\fld{T}$-conditioned invariant in $\ker H_1\op{C}$, by invoking Theorem \ref{Thm:CA_T_Inv}, $\op{S}$ is the largest subspace in the form \eqref{Eq:TCA-InvCon} that is contained in $\ker H_1\op{C}$. Since $\op{S}^*$ is also expressed in the form \eqref{Eq:TCA-InvCon} (since $\op{S}^*$ is also $\fld{T}$-conditioned invariant), it follows that $\op{S}^*\subseteq\op{S}$. This completes the proof of the theorem.
\end{proof}

It should be pointed out that since $\ssp{W}_\mr{f}^*$ is Fin-D and the operator $\op{A+DC}$ is regular RS, $\ssp{W}_{\phi,\mr{f}}^*$ is Fin-D. Therefore, one can compute $\ssp{W}_{\phi,\mr{f}}^*$ based on the sub-eigenspaces of $\op{A+DC}$ (i.e., for every sub-eigenspace $\ssp{E}_0^D$ of $\op{A+DC}$ that (i) is contained in $\overline{\ssp{W}^*+\ker\op{C}}$, (ii) $\ssp{E}_0^D\not\subseteq\overline{\ssp{W}_\phi^*+\ssp{N}}$, and (iii) $\ssp{E}_0^D\not\perp\ssp{W}_\mr{f}^*$, we have $\ssp{E}_0^D\subseteq\ssp{W}_{\phi,\mr{f}}^*$).

%====================================================================== %
\subsection{Controlled Invariant Subspaces and the Duality Property}\label{Sec:Dual}

As stated above, for addressing the FDI problem one needs to construct the conditioned invariant subspace. However, for the  disturbance decoupling problem the controlled invariant subspaces (that are dual to the conditioned invariant subspaces)  are needed. For sake of completeness of this paper, in this subsection we review controlled invariant subspaces of the RS system \eqref{Eq:GeneralID}, where necessary and sufficient conditions for  the controlled invariance are provided. We address the controlled invariant subspaces by using the duality property. Moreover, we compare our results with those that are currently available in the literature \cite{Pandolfi_Disturbance_86,SchmidtInv_1980,Byrnes1998}.
%\vspace{-2mm}
%\subsection{Controlled Invariant Subspace}

Similar to conditioned invariant subspaces, there are \emph{three types} of controlled invariant subspaces. These are discussed further below.
\begin{definition}\label{Def:ControlledInv}\cite{Curtain_Invariance_1986}
	Consider the closed subspace $\ssp{V}\subseteq\op{X}$ and $\ssp{B} = \ima \op{B}$, where $\op{B}$ is defined from the system \eqref{Eq:GeneralID}. Then,
	\begin{enumerate}
		\item $\ssp{V}$ is called \AB-invariant if $\op{A}(\ssp{V}\cap\dom{A})\subseteq\overline{\ssp{V}+\ssp{B}} = \ssp{V}+\ssp{B}$ (since $\dim(\ssp{B})<\infty$).
		\item $\ssp{V}$ is called feedback \AB-invariant if there exists a bounded operator $\op{F}:\op{X}\rightarrow\fld{R}^m$ such that $(\op{A+BF})(\ssp{V}\cap\dom{A})\subseteq\ssp{V}$.
		\item $\ssp{V}$ is called $\fld{T}$-controlled invariant if there exists a bounded operator $\op{F}:\op{X}\rightarrow\fld{R}^m$ such that (i) the operator $\op{A+BF}$ is the infinitesimal generator of a $C_0$-semigroup $\fld{T}_\op{A+BF}$; and (ii) $\ssp{V}$ is invariant with respect to $\fld{T}_\op{A+BF}$ as per Definition \ref{Def:AInv}, item 2).
	\end{enumerate}
\end{definition}
In the literature, $\fld{T}$-controlled invariance is also called closed feedback invariance \cite{Zwart_Book} and $\fld{T}(\op{A},\op{B})$-invariance \cite{Curtain_Invariance_1986}. Following the above discussion, it can be shown that  Definition \ref{Def:ControlledInv}, item 3) $\Rightarrow$ item 2) $\Rightarrow$ item 1) \cite{Curtain_Invariance_1986}. In this subsection, we are interested in developing and addressing  necessary and sufficient conditions for equivalence of the above definitions.
In \cite{Curtain_Invariance_1986}, the duality between the  Definitions \ref{Def:ConditionInv} and \ref{Def:ControlledInv} was shown by using the following lemmas (the superscript $*$ is used for adjoint operators).
\begin{lemma}\cite{Curtain_Invariance_1986} (Lemma 5.2)\label{Lem:SubspaceAdjoint}
	Consider the system \eqref{Eq:GeneralID}, where $\op{A}$ is an infinitesimal generator of the $C_0$ semigroup $\SG{T}{A}$ (more general than the regular RS operator) and the operator $\op{C}$ is bounded (but not necessarily finite rank), and two subspaces $\ssp{S}_1$ and $\ssp{S}_2$. We have
	\begin{enumerate}
		\item $(\ssp{S}_1+\ssp{S}_2)^\perp = \overline{\ssp{S}_1^\perp+\ssp{S}_2^\perp}$.
		\item $(\ker\op{C})^\perp = \overline{\ima \op{C}^*}$.
		\item If $\SG{T}{A}\ssp{S}_1\subseteq\ssp{S}_2$, then $\fld{T}_{\op{A}^*}\ssp{S}_2^\perp\subseteq\ssp{S}_1^\perp$.
		\item If $\op{A}(\ssp{S}_1\cap\dom{A})\subseteq\ssp{S}_2$, then $\op{A}^*(\ssp{S}_2^\perp\cap\dom{A^*})\subseteq(\ssp{S}_1\cap\dom{A})^\perp$.
	\end{enumerate}
\end{lemma}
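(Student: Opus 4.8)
The plan is to dispatch the four claims in the order stated. Items 1 and 2 are purely geometric facts about orthogonal complements and adjoints in a Hilbert space and require no information about the dynamics; items 3 and 4 are the genuinely ``dynamical'' statements that transport invariance of $\ssp{S}_1,\ssp{S}_2$ under $\SG{T}{A}$ (resp. $\op{A}$) into invariance of the complements under the adjoint, and both rest on the single pairing identity $\langle \op{T}x,y\rangle=\langle x,\op{T}^*y\rangle$ together with the standard fact that on a Hilbert space the adjoint of a $C_0$-semigroup is again a $C_0$-semigroup, with $\fld{T}_{\op{A}^*}(t)=(\SG{T}{A}(t))^*$ generated by $\op{A}^*$.

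For item 1, the entire content is the sum--intersection duality of orthogonal complements combined with the bipolar theorem $\ssp{S}^{\perp\perp}=\overline{\ssp{S}}$. I would first record the elementary pointwise fact that, since $0$ lies in each subspace, a vector annihilates $\ssp{S}_1+\ssp{S}_2$ if and only if it annihilates each summand, giving the identity relating a sum to the intersection of complements; applying this to $\ssp{S}_1^\perp,\ssp{S}_2^\perp$ and complementing through the bipolar theorem converts that intersection into the closed sum $\overline{\ssp{S}_1^\perp+\ssp{S}_2^\perp}$ asserted on the right. The closure is precisely what accounts for the fact that the algebraic sum of two closed subspaces need not be closed. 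For item 2, I would show $\ima\op{C}^*\subseteq(\ker\op{C})^\perp$ directly from $\langle x,\op{C}^*y\rangle=\langle \op{C}x,y\rangle=0$ for every $x\in\ker\op{C}$, then take closures; the reverse inclusion follows by establishing $(\ima\op{C}^*)^\perp=\ker\op{C}$ from the same pairing and complementing, using that $\op{C}$ is bounded so $\op{C}^*$ is a well-defined bounded operator.

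For items 3 and 4 I would apply the pairing identity verbatim. In item 3, fix $y\in\ssp{S}_2^\perp$ and $x\in\ssp{S}_1$; then for every $t\ge 0$ we have $\langle x,\fld{T}_{\op{A}^*}(t)y\rangle=\langle \SG{T}{A}(t)x,y\rangle=0$, because $\SG{T}{A}(t)x\in\ssp{S}_2$ by hypothesis while $y\perp\ssp{S}_2$, so $\fld{T}_{\op{A}^*}(t)y\in\ssp{S}_1^\perp$. In item 4, fix $y\in\ssp{S}_2^\perp\cap\dom{A^*}$ and $x\in\ssp{S}_1\cap\dom{A}$; the defining relation of the unbounded adjoint gives $\langle x,\op{A}^*y\rangle=\langle \op{A}x,y\rangle=0$ since $\op{A}x\in\ssp{S}_2$ and $y\perp\ssp{S}_2$, hence $\op{A}^*y\in(\ssp{S}_1\cap\dom{A})^\perp$.

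I expect the main obstacle to be careful bookkeeping with the unbounded operators and the closure, rather than any deep idea. The adjoint relation $\langle \op{A}x,y\rangle=\langle x,\op{A}^*y\rangle$ is valid \emph{only} for $x\in\dom{A}$ and $y\in\dom{A^*}$, so in item 4 one must intersect with exactly these domains as the statement does and must not attempt to say anything about $x\notin\dom{A}$. In item 3 the one external fact that must be invoked is that $\fld{T}_{\op{A}^*}$ is a strongly continuous semigroup equal to $(\SG{T}{A})^*$ with generator $\op{A}^*$, which is standard on a Hilbert space; granting it, the pairing computation closes each case immediately. The closure in item 1 is the sole point where a genuine limiting argument is needed, and it is supplied cleanly by the bipolar theorem.
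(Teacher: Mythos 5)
The paper offers no proof of this lemma; it is imported verbatim from the cited reference \cite{Curtain_Invariance_1986}, so there is no in-paper argument to compare against and your proposal stands or falls on its own. Items 2, 3 and 4 are correct and are the standard arguments: the pairing $\langle x,\op{C}^*y\rangle=\langle\op{C}x,y\rangle$ together with the bipolar theorem for item 2, the Hilbert-space identity $\fld{T}_{\op{A}^*}(t)=(\SG{T}{A}(t))^*$ for item 3, and the defining relation of the unbounded adjoint for item 4, with the domain intersections handled exactly as the statement requires.

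Item 1 contains a genuine gap, and the root cause is that the identity as printed is false: for $\ssp{S}_1=\{0\}$ and $\ssp{S}_2$ a proper nonzero closed subspace, $(\ssp{S}_1+\ssp{S}_2)^\perp=\ssp{S}_2^\perp$ while $\overline{\ssp{S}_1^\perp+\ssp{S}_2^\perp}=\op{X}$. The right-hand side $\overline{\ssp{S}_1^\perp+\ssp{S}_2^\perp}$ pairs with the left-hand side $(\ssp{S}_1\cap\ssp{S}_2)^\perp$ (the form in Curtain's Lemma 5.2, for closed subspaces), whereas $(\ssp{S}_1+\ssp{S}_2)^\perp$ equals $\ssp{S}_1^\perp\cap\ssp{S}_2^\perp$, which is also the fact actually invoked downstream, e.g.\ when $(\ssp{V}+\ima\op{B})^\perp$ is replaced by $\ssp{V}^\perp\cap(\ima\op{B})^\perp$ in the proof of Theorem \ref{Thm:MyDual}. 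Your two computations are each individually correct: the pointwise argument gives $(\ssp{S}_1+\ssp{S}_2)^\perp=\ssp{S}_1^\perp\cap\ssp{S}_2^\perp$, and applying that identity to $\ssp{S}_1^\perp,\ssp{S}_2^\perp$ and then invoking the bipolar theorem gives $\overline{\ssp{S}_1^\perp+\ssp{S}_2^\perp}=(\overline{\ssp{S}_1}\cap\overline{\ssp{S}_2})^\perp$. But these compute two different objects, and the final step in which you declare that the second ``converts'' the first into the asserted right-hand side is a non sequitur; no such identification is possible, precisely because the printed equality fails. The repair is to flag the misprint, prove the intersection version (your second computation already does this for closed subspaces), and record $(\ssp{S}_1+\ssp{S}_2)^\perp=\ssp{S}_1^\perp\cap\ssp{S}_2^\perp$ as the separate, true statement that the rest of the paper actually uses.
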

By using Lemma \ref{Lem:SubspaceAdjoint}, item 3) the following result can be obtained.
\begin{lemma}\cite{Curtain_Invariance_1986}\label{Lem:DualCurtain}
	Consider the regular RS system \eqref{Eq:GeneralID}. The subspace $\ssp{V}$ is $\fld{T}$-controlled invariant  if and only if $\ssp{V}^\perp$ is $\fld{T}$-conditioned invariant with respect to ($\op{B}^*$,$\op{A}^*$).
\end{lemma}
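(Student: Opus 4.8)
The plan is to read Lemma~\ref{Lem:DualCurtain} as a direct consequence of the adjoint calculus in Lemma~\ref{Lem:SubspaceAdjoint}, item~3), combined with the Hilbert-space fact that the adjoint of a $C_0$-semigroup generator is again a $C_0$-semigroup generator (generating the adjoint semigroup). Because the primal and dual data are symmetric once we recall that on the Hilbert space $\op{X}$ we have $\op{A}^{**}=\op{A}$, $\op{B}^{**}=\op{B}$, and $(\ssp{V}^\perp)^\perp=\ssp{V}$ for the closed subspace $\ssp{V}$, it suffices to establish one implication carefully and then obtain the converse by applying that same implication to the dual pair $(\op{B}^*,\op{A}^*)$.

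For the forward direction I would start from a friend of $\ssp{V}$. If $\ssp{V}$ is $\fld{T}$-controlled invariant, then by Definition~\ref{Def:ControlledInv}, item~3), there is a bounded $\op{F}:\op{X}\rightarrow\fld{R}^m$ such that $\op{A+BF}$ generates a $C_0$-semigroup $\SG{T}{A+BF}$ and $\SG{T}{A+BF}(t)\ssp{V}\subseteq\ssp{V}$ for all $t\ge 0$. First I would pass to adjoints: since $\op{B}$ and $\op{F}$ are bounded, $(\op{A+BF})^*=\op{A}^*+\op{F}^*\op{B}^*$ with no domain subtleties, so setting $\op{D}:=\op{F}^*$ (a bounded operator $\fld{R}^m\rightarrow\op{X}$) this equals $\op{A}^*+\op{D}\op{B}^*$. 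Because $\op{X}$ is Hilbert, the adjoint semigroup $\SG{T}{A+BF}(t)^*$ is strongly continuous with generator $(\op{A+BF})^*$; hence $\op{A}^*+\op{D}\op{B}^*$ generates a $C_0$-semigroup, which is precisely condition~(i) of $\fld{T}$-conditioned invariance for the pair $(\op{B}^*,\op{A}^*)$. Then I would invoke Lemma~\ref{Lem:SubspaceAdjoint}, item~3), applied to the generator $\op{A+BF}$ with $\ssp{S}_1=\ssp{S}_2=\ssp{V}$: from $\SG{T}{A+BF}(t)\ssp{V}\subseteq\ssp{V}$ we conclude $\fld{T}_{(\op{A+BF})^*}(t)\ssp{V}^\perp\subseteq\ssp{V}^\perp$ for all $t\ge 0$, which is exactly condition~(ii), namely invariance of $\ssp{V}^\perp$ under the adjoint semigroup. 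Thus $\op{D}$ witnesses that $\ssp{V}^\perp$ is $\fld{T}$-conditioned invariant with respect to $(\op{B}^*,\op{A}^*)$.

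For the converse I would run the forward argument on the dual RS system whose state operator is $\op{A}^*$ and whose (finite rank) output operator is $\op{B}^*$, with the subspace $\ssp{V}^\perp$ in place of $\ssp{V}$ and a friend $\op{D}$ in place of $\op{F}^*$. The identical chain of steps shows that $(\ssp{V}^\perp)^\perp$ is $\fld{T}$-controlled invariant for $(\op{A}^{**},\op{B}^{**})=(\op{A},\op{B})$, and since $\ssp{V}$ is closed, $(\ssp{V}^\perp)^\perp=\ssp{V}$, which closes the equivalence.

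The step requiring genuine care—the hard part—is the semigroup-generation claim: one must be sure that the adjoint of the generator $\op{A+BF}$ really generates a $C_0$-semigroup and that this adjoint semigroup is the very one appearing in Lemma~\ref{Lem:SubspaceAdjoint}, item~3). This is where reflexivity of the Hilbert space $\op{X}$ is indispensable, since on a general Banach space the adjoint semigroup need only be weak-$*$ continuous; I would therefore cite the standard result that on a Hilbert space the adjoint of a $C_0$-semigroup is again a $C_0$-semigroup with generator $(\op{A+BF})^*$. By contrast, the algebraic identity $(\op{A+BF})^*=\op{A}^*+\op{F}^*\op{B}^*$ is routine precisely because only the bounded perturbation $\op{BF}$ is added to $\op{A}$, so it contributes no additional domain constraints.
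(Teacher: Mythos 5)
Your proposal is correct and follows exactly the route the paper indicates: the paper states this lemma as a citation to the Curtain--Zwart reference and only remarks that it is obtained from Lemma~\ref{Lem:SubspaceAdjoint}, item~3), which is precisely the step you elaborate, together with the standard Hilbert-space facts that the adjoint semigroup is strongly continuous with generator $(\op{A+BF})^*=\op{A}^*+\op{F}^*\op{B}^*$ and that $\op{A}^{**}=\op{A}$, $(\ssp{V}^\perp)^\perp=\ssp{V}$ yield the converse by symmetry. Nothing in your argument is missing or misapplied.
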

%However, as can be observed from Lemma  \ref{Lem:SubspaceAdjoint} (item 4)), the dual equivalence of  Lemma \ref{Lem:CA_equalency} will not be straightforward to show by using the duality property.
 The following lemma now directly provides our proposed result.
\begin{lemma}\label{Lem:AB_Equivalency}
	Consider the regular RS system \eqref{Eq:GeneralID} and the closed subspace $\ssp{V}$ such that $\overline{\ssp{V}\cap\dom{A}} = \ssp{V}$. The feedback \AB-invariance property is equivalent to the \AB-invariance property.
\end{lemma}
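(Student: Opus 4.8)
The plan is to prove the statement by duality, transporting it to the conditioned--invariance equivalence already established in Lemma~\ref{Lem:CA_equalency}. Since every feedback \AB-invariant subspace is \AB-invariant (Definition~\ref{Def:ControlledInv}, item~2)~$\Rightarrow$~item~1)), only the converse needs work: assuming $\ssp{V}$ is \AB-invariant I must produce a bounded $\op{F}:\op{X}\rightarrow\fld{R}^m$ with $(\op{A+BF})(\ssp{V}\cap\dom{A})\subseteq\ssp{V}$. Rather than building $\op{F}$ directly, I would pass to the adjoint system $(\op{B}^*,\op{A}^*)$, in which $\op{A}^*$ generates the adjoint $C_0$-semigroup (so it is again a regular RS operator, with conjugate eigenvalues) and $\op{B}^*x=[<b_1,x>,\dots,<b_m,x>]^{\tran}$ is a \emph{finite rank} output operator of exactly the form \eqref{Eq:OuputOperator}. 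This finite-rankness of $\op{B}^*$ is the structural point that makes Lemma~\ref{Lem:CA_equalency} available for the dual problem.

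Next I would record the two dual correspondences. Applying Lemma~\ref{Lem:SubspaceAdjoint}, item~4) with $\ssp{S}_1=\ssp{V}$ and $\ssp{S}_2=\ssp{V}+\ssp{B}$, the inclusion $\op{A}(\ssp{V}\cap\dom{A})\subseteq\ssp{V}+\ssp{B}$ gives $\op{A}^*\big((\ssp{V}+\ssp{B})^\perp\cap D(\op{A}^*)\big)\subseteq(\ssp{V}\cap\dom{A})^\perp$. Here $(\ssp{V}+\ssp{B})^\perp=\ssp{V}^\perp\cap\ssp{B}^\perp=\ssp{V}^\perp\cap\ker\op{B}^*$, while the hypothesis $\overline{\ssp{V}\cap\dom{A}}=\ssp{V}$ gives $(\ssp{V}\cap\dom{A})^\perp=\ssp{V}^\perp$; hence $\op{A}^*(\ssp{V}^\perp\cap\ker\op{B}^*\cap D(\op{A}^*))\subseteq\ssp{V}^\perp$, i.e. $\ssp{V}^\perp$ is ($\op{B}^*$,$\op{A}^*$)-invariant in the sense of Definition~\ref{Def:ConditionInv}, item~1). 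For the feedback notions, if $(\op{A+BF})(\ssp{V}\cap\dom{A})\subseteq\ssp{V}$, then since $\op{B}\op{F}$ is bounded one has $(\op{A+BF})^*=\op{A}^*+\op{F}^*\op{B}^*$ with $D((\op{A+BF})^*)=D(\op{A}^*)$, and item~4) yields $(\op{A}^*+\op{F}^*\op{B}^*)(\ssp{V}^\perp\cap D(\op{A}^*))\subseteq\ssp{V}^\perp$; thus $\op{D}:=\op{F}^*:\fld{R}^m\rightarrow\op{X}$ witnesses feedback ($\op{B}^*$,$\op{A}^*$)-invariance of $\ssp{V}^\perp$, and the construction reverses. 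Consequently the asserted equivalence for $\ssp{V}$ is equivalent to the equivalence of \CA- and feedback \CA-invariance for $\ssp{V}^\perp$ in the dual system, which is precisely Lemma~\ref{Lem:CA_equalency} with output operator $\op{B}^*$.

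The single hypothesis of Lemma~\ref{Lem:CA_equalency} that is not automatic here is its density requirement $\overline{D(\op{A}^*)\cap\ssp{V}^\perp}=\ssp{V}^\perp$, and I expect \emph{this} to be the main obstacle. Unlike the semigroup-invariance duality of Lemma~\ref{Lem:DualCurtain}, which is handled by the soft orthogonal-complement identities of Lemma~\ref{Lem:SubspaceAdjoint}, the density of $\ssp{V}\cap\dom{A}$ in $\ssp{V}$ does \emph{not} transfer to $\ssp{V}^\perp$ by a purely formal argument, because $\dom{A}$ is not closed and so $(\ssp{V}\cap\dom{A})^\perp=\ssp{V}^\perp$ carries no information about $\ssp{V}^\perp\cap D(\op{A}^*)$. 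I would therefore treat the derivation of $\overline{D(\op{A}^*)\cap\ssp{V}^\perp}=\ssp{V}^\perp$ from $\overline{\ssp{V}\cap\dom{A}}=\ssp{V}$ as the substantive core of the proof, and I would expect it to genuinely require the regular RS hypothesis rather than following from boundedness alone.

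To attack that step I would work in the biorthonormal expansion attached to the Riesz basis $\RieszBas{\phi}$: membership in $D(\op{A}^*)$ is characterized by summability of the eigencoefficients against $|\lambda_i|^2$, and the bounded resolvent $(\overline{\lambda}\op{I}-\op{A}^*)^{-1}=\big((\lambda\op{I}-\op{A})^{-1}\big)^*$ maps $\op{X}$ onto $D(\op{A}^*)$ for $\lambda\in\roi{A}$. The intended route is to approximate a given $z\in\ssp{V}^\perp$ by resolvent images while preserving orthogonality to $\ssp{V}$, exploiting that $(\lambda\op{I}-\op{A})^{-1}$ leaves $\ssp{V}$ invariant on its dense part $\ssp{V}\cap\dom{A}$; the eigenvalue-separation condition of Remark~\ref{Rem:CondOnDefRegRSOp} is what I would rely on to keep the approximation inside $\ssp{V}^\perp\cap D(\op{A}^*)$. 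The finite-rankness of $\op{B}$ enters only peripherally, guaranteeing that $\ssp{V}+\ssp{B}$ is closed and that $\op{B}^*$ has the form \eqref{Eq:OuputOperator}. Once the dual density is secured, Lemma~\ref{Lem:CA_equalency} and the correspondences above close the argument; conversely, if the dual density cannot be established in full generality, this is exactly the point where an extra hypothesis on $\ssp{V}^\perp$ (or $\ssp{V}\subseteq\dom{A}$) would have to be imposed.
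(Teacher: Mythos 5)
Your route is genuinely different from the paper's, and it contains a real gap that you yourself flag but do not close. The paper does not dualize at all: it proves the nontrivial direction directly, mirroring the construction in Lemma~\ref{Lem:CA_equalency}. Using $\overline{\ssp{V}\cap\dom{A}}=\ssp{V}$ it chooses a basis of $\ssp{V}$ inside $\dom{A}$ and splits $\ssp{V}=\ssp{V}_{\mathrm{inf}}+\ssp{V}_{\mathrm{f}}$ with $\ssp{V}_{\mathrm{f}}=\spanset{v_i}_{i=1}^{n_{\mathrm{f}}}\subset\dom{A}$ finite dimensional (finite precisely because $\op{B}$ is finite rank), arranged so that $\op{A}(\ssp{V}_{\mathrm{inf}}\cap\dom{A})\subseteq\ssp{V}$ and $\op{A}\ssp{V}_{\mathrm{f}}\subseteq\ssp{B}$; it then solves $\op{A}v_i=-\op{B}u_i$, sets $Fv_i=u_i$, and extends $F$ by zero on $\ssp{V}_{\mathrm{f}}^\perp$ to a bounded $\op{F}$ with $(\op{A+BF})(\ssp{V}\cap\dom{A})\subseteq\ssp{V}$. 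So the finite rank of $\op{B}$ is not peripheral, as you suggest --- it is exactly what makes the subspace needing correction finite dimensional and $\op{F}$ bounded --- and no statement about $\ssp{V}^\perp\cap D(\op{A}^*)$ is ever required.

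The gap in your version is the one you identify: to invoke Lemma~\ref{Lem:CA_equalency} for the pair ($\op{B}^*$,$\op{A}^*$) you need $\overline{\ssp{V}^\perp\cap D(\op{A}^*)}=\ssp{V}^\perp$, and you need it a second time to convert $(\ssp{V}^\perp\cap D(\op{A}^*))^\perp$ back into $\ssp{V}$ when you dualize the feedback inclusion to return to the primal system. You leave this unproved, and the attack you sketch rests on a false premise: you propose to approximate $z\in\ssp{V}^\perp$ by resolvent images while ``exploiting that $(\lambda\op{I}-\op{A})^{-1}$ leaves $\ssp{V}$ invariant on its dense part.'' By Lemma~\ref{Lem:A-1_T-Inv}, resolvent invariance of a closed subspace $\ssp{V}$ is equivalent to $\SG{T}{A}$-invariance, which a merely \AB-invariant subspace need not enjoy (the entire point of Theorem~\ref{Thm:MyDual} is that $\fld{T}$-controlled invariance is strictly stronger than \AB-invariance). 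Hence the resolvent does not preserve $\ssp{V}$, its adjoint does not preserve $\ssp{V}^\perp$, and your approximation scheme breaks down at the decisive step. To repair the argument you would have to either supply an independent proof of the dual density --- which the paper itself only secures in Theorem~\ref{Thm:MyDual} under the extra hypothesis $\ssp{V}_{\mathrm{f}}\subset\dom{A^*}$ --- or abandon the duality detour in favor of the direct finite-dimensional feedback construction.
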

\begin{proof}
	It is sufficient to show  that \AB-invariance $\Rightarrow$  feedback \AB-invariance. Let $\ssp{V}$ be \AB-invariant. Since $\dom{A}$ is dense in $\ssp{V}$, one can construct the basis $\{v_i\}_{i\in\fld{I}}$ (where $\fld{I}\subseteq\fld{N}$) such that $v_i\in\dom{A}$. Since $\op{B}$ is finite rank,  we have $\ssp{V} = \ssp{V}_{\text{inf}}+\ssp{V}_\text{f}$, such that $\op{A}(\ssp{V}_{\text{inf}}\cap\dom{A})\subseteq\ssp{V}$, $\ssp{V}_\mr{f}\subset\dom{A}$ and $\op{A}v_i$' are linearly independent for all $i=1,\cdots,n_f$, where without loss of any generality we assume that $\ssp{V}_\text{f} = \spanset{v_i}_{i=1}^{n_\text{f}}$ and $\op{A}\ssp{V}_\mr{f}\subseteq\ssp{B}$ (by following along the same steps as in Lemma \ref{Lem:CA_equalency}). Therefore, there exist $u_i$'s such that $\op{A}v_i = -\op{B}u_i$ for all $i=1,\cdots,n_\text{f}$. Let us now define $F$ such that $F[v_1,\;\cdots,\;v_{n_\text{f}}] = [u_1,\;\cdots,\;u_{n_\text{f}}]$ (note since $\ker [v_1,\;\cdots,\;v_{n_\text{f}}] = 0$,  $F$ always exists), and let $\op{F}$ denote the extension of $F$ to $\op{X}$. In other words, for all $x\in\op{X}$, we have $\op{F}x=Fx_v$, where $x=x_{v^\perp}+x_v$, $x_v\in\ssp{V}_\text{f}$ and $x_{v^\perp}\perp\ssp{V}_\text{f}$. It follows that $||\op{F}||=||F||<\infty$ (i.e., $\op{F}$ is bounded) and $(\op{A+BF})(\ssp{V}\cap\dom{A})\subseteq\ssp{V}$. Therefore, $\ssp{V}$ is feedback \AB-invariant. This completes the proof of the lemma.	
\end{proof}
\begin{remark}\label{Rem:A-BoundedFeadback}
		The operator $\op{F}:\op{X}\rightarrow\op{Y}$ is $\op{A}$-bounded if $\dom{A}\subseteq\dom{F}$ and $\op{F}(\lambda \op{I-A})^{-1}$ is bounded (\cite{Zwart_Book}-Definition II.4). In \cite{Zwart_Book} feedback \AB-invariant is defined as follows. The subspace $\ssp{V}$ is feedback \AB-invariant if there exists an \underline{$\op{A}$-bounded} state feedback (as opposed to bounded state feedback as in Definition \eqref{Def:AInv}) $\op{F}$, such that $(\op{A+BF})(\ssp{V}\cap\dom{A})\subseteq\ssp{V}$. By this definition, in \cite{Zwart_Book} (Theorem II.26), it is shown that \AB-invariant and feedback \AB-invariant are equivalent.    However, Lemma \ref{Lem:AB_Equivalency} above achieves the same result (but by including an extra condition that is $\overline{\ssp{V}\cap\dom{A}} = \ssp{V}$) when we restrict the feedback to  bounded operators (i.e., as per Definition \ref{Def:AInv}). Note that this result cannot be concluded from Lemma II.25 and Theorem II.26 in \cite{Zwart_Book}.
\end{remark}

%\begin{theorem}\label{Thm:AB_T_Inv}
%	Consider the regular RS system \eqref{Eq:GeneralID} and the \AB-invariant subspace $\ssp{V}$ such that $\overline{\ssp{V}\cap\dom{A}}= \ssp{V}$. Then, $\ssp{V}$ is $\fld{T}$-controlled invariant if and only if $\ssp{V} = \ssp{V}_\phi+\ssp{V}_f$, where  $\ssp{V}_\phi$ is the largest subspace in the form  $\ssp{V}_\phi=\overline{\spanset{\ssp{E}_i}}_{i\in\fld{I}}$, $\ssp{E}_i$ are subeigenspace of $\op{A}$, $\fld{I}\subseteq\fld{N}$ and $\dim(\ssp{V}_f)<\infty$.
%\end{theorem}
However, we are interested in deriving a direct  necessary and sufficient condition for the $\fld{T}$-controlled invariance property. By taking advantage of the duality property, the following theorem now provides the necessary and sufficient conditions for the $\fld{T}$-controlled invariance property.
\begin{theorem}\label{Thm:MyDual}
	Consider the regular RS system \eqref{Eq:GeneralID} and the closed subspace $\ssp{V}$ such that $\overline{\ssp{V}\cap\dom{A}}=\ssp{V}$ and $\op{A}(\ssp{V}\cap\dom{A})\subseteq\ssp{V}+\ima \op{B}$. Then, $\ssp{V}$ is $\fld{T}$-controlled invariant if and only if $\ssp{V}$ can be represented as $\ssp{V}=\ssp{V}_\phi\cap\ssp{V}_\mathrm{f}^\perp$, where $\ssp{V}_\mathrm{f}\subset\dom{A^*}$ is a Fin-D subspace and $\ssp{V}_\phi$ is the smallest subspace containing $\ssp{V}$ that can be expressed as
	\begin{equation}\label{Eq:EigeSpanCtrCond}
	\ssp{V}_\phi = \overline{\spanset{\ssp{E}_i}_{i\in\fld{I}}},
	\end{equation}
	in which $\ssp{E}_i$'s denote the sub-eigenspaces of $\op{A}$ and $\fld{I}\subseteq\fld{N}$.
\end{theorem}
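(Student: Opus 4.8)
The plan is to prove both directions at once by dualising the statement and reducing it to the conditioned-invariance characterisation of Theorem~\ref{Thm:CA_T_Inv} applied to the adjoint pair $(\op{B}^*,\op{A}^*)$. First I would record the two facts that make this legitimate: the adjoint $\op{A}^*$ is again a regular RS operator, since its (generalised) eigenvectors are the biorthonormal family $\{\psi_{i,k}\}$ with eigenvalues $\overline{\lambda_i}$ (which form the dual Riesz basis), all but finitely many of these eigenvalues remain simple, and the summability condition of Remark~\ref{Rem:CondOnDefRegRSOp} is invariant under conjugation; and $\op{B}^*$ is a bounded finite-rank operator, so it plays the role of the finite-rank output operator \eqref{Eq:OuputOperator} in the adjoint system. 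Consequently Theorem~\ref{Thm:CA_T_Inv} is available for $(\op{B}^*,\op{A}^*)$, and by Lemma~\ref{Lem:DualCurtain}, $\ssp{V}$ is $\fld{T}$-controlled invariant if and only if $\ssp{V}^\perp$ is $\fld{T}$-conditioned invariant with respect to $(\op{B}^*,\op{A}^*)$.

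Before invoking Theorem~\ref{Thm:CA_T_Inv} I must check that $\ssp{V}^\perp$ is (\,$\op{B}^*$,$\op{A}^*$\,)-invariant, and this holds under the standing hypotheses regardless of direction. Starting from $\op{A}(\ssp{V}\cap\dom{A})\subseteq\ssp{V}+\ima\op{B}$, I would apply Lemma~\ref{Lem:SubspaceAdjoint}, item~4), with $\ssp{S}_1=\ssp{V}$ and $\ssp{S}_2=\ssp{V}+\ima\op{B}$ to obtain $\op{A}^*\big((\ssp{V}+\ima\op{B})^\perp\cap\dom{A^*}\big)\subseteq(\ssp{V}\cap\dom{A})^\perp$. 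The density hypothesis $\overline{\ssp{V}\cap\dom{A}}=\ssp{V}$ gives $(\ssp{V}\cap\dom{A})^\perp=\ssp{V}^\perp$, while the standard identity $(\ssp{V}+\ima\op{B})^\perp=\ssp{V}^\perp\cap(\ima\op{B})^\perp$ combined with the relation $(\ima\op{B})^\perp=\ker\op{B}^*$ from Lemma~\ref{Lem:SubspaceAdjoint}, item~2), yields $\op{A}^*(\ssp{V}^\perp\cap\ker\op{B}^*\cap\dom{A^*})\subseteq\ssp{V}^\perp$. This is exactly the (\,$\op{B}^*$,$\op{A}^*$\,)-invariance required by Theorem~\ref{Thm:CA_T_Inv}.

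With this in hand, Theorem~\ref{Thm:CA_T_Inv} asserts that $\ssp{V}^\perp$ is $\fld{T}$-conditioned invariant if and only if $\ssp{V}^\perp=\ssp{W}_\phi+\ssp{W}_\mr{f}$ with $\overline{\dom{A^*}\cap\ssp{V}^\perp}=\ssp{V}^\perp$, where $\ssp{W}_\mr{f}\subset\dom{A^*}$ is Fin-D and $\ssp{W}_\phi$ is the largest sum of sub-eigenspaces of $\op{A}^*$ contained in $\ssp{V}^\perp$. I would then take orthogonal complements. In the forward direction the identity $(\ssp{W}_\phi+\ssp{W}_\mr{f})^\perp=\ssp{W}_\phi^\perp\cap\ssp{W}_\mr{f}^\perp$ gives $\ssp{V}=\ssp{W}_\phi^\perp\cap\ssp{W}_\mr{f}^\perp$; setting $\ssp{V}_\mr{f}=\ssp{W}_\mr{f}$ (Fin-D, contained in $\dom{A^*}$) and $\ssp{V}_\phi=\ssp{W}_\phi^\perp$ produces the asserted form. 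In the reverse direction I would start from $\ssp{V}=\ssp{V}_\phi\cap\ssp{V}_\mr{f}^\perp$, so that $\ssp{V}^\perp=\overline{\ssp{V}_\phi^\perp+\ssp{V}_\mr{f}}$, which by Lemma~\ref{Lem:SumClosedSpace} (finite-dimensional summand) is already closed and equals $\ssp{V}_\phi^\perp+\ssp{V}_\mr{f}$; since $\ssp{V}_\phi^\perp$ is a sum of sub-eigenspaces of $\op{A}^*$, it is $\fld{T}_{\op{A}^*}$-invariant and hence satisfies $\overline{\ssp{V}_\phi^\perp\cap\dom{A^*}}=\ssp{V}_\phi^\perp$, so the decomposition and density required by Theorem~\ref{Thm:CA_T_Inv} hold and $\ssp{V}^\perp$ is $\fld{T}$-conditioned invariant.

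The remaining and, I expect, most delicate step is to identify $\ssp{V}_\phi=\ssp{W}_\phi^\perp$ with the \emph{smallest} subspace of the form \eqref{Eq:EigeSpanCtrCond} (a sum of sub-eigenspaces of $\op{A}$) that contains $\ssp{V}$. The engine here is the complementation duality between sub-eigenspaces of $\op{A}$ and of $\op{A}^*$: because the eigenvectors $\phi_{i,k}$ of $\op{A}$ and the eigenvectors $\psi_{i,k}$ of $\op{A}^*$ are biorthonormal, the orthogonal complement of a closed sum of sub-eigenspaces of $\op{A}^*$ is again a closed sum of sub-eigenspaces of $\op{A}$, and this correspondence is bijective and inclusion-reversing. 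For the infinitely many simple eigenvalues this complementation is immediate, whereas the careful book-keeping is confined to the finitely many multiple eigenvalues, where sub-eigenspaces must be matched inside each finite-dimensional generalised eigenspace; this is the part I would write out in full. Granting it, the equivalence $\ssp{U}\subseteq\ssp{V}^\perp\Leftrightarrow\ssp{V}\subseteq\ssp{U}^\perp$ turns \emph{``$\ssp{W}_\phi$ largest such subspace contained in $\ssp{V}^\perp$''} into \emph{``$\ssp{W}_\phi^\perp$ smallest such subspace containing $\ssp{V}$''}, which is exactly the minimality demanded of $\ssp{V}_\phi$, and conversely matches the maximality of $\ssp{W}_\phi$ in the reverse direction. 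As every link in the chain is an equivalence, both implications of the theorem follow simultaneously.
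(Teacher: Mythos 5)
Your proposal is correct and follows essentially the same route as the paper's own proof: reduce via Lemma \ref{Lem:DualCurtain} to $\fld{T}$-conditioned invariance of $\ssp{V}^\perp$ with respect to $(\op{B}^*,\op{A}^*)$, verify the $(\op{B}^*,\op{A}^*)$-invariance through Lemma \ref{Lem:SubspaceAdjoint} item 4) and the density hypothesis, apply Theorem \ref{Thm:CA_T_Inv} to the adjoint pair, and translate the decomposition back by orthogonal complementation using the biorthogonality of the eigenvectors of $\op{A}$ and $\op{A}^*$. You are in fact somewhat more explicit than the paper about two points it leaves implicit, namely that $\op{A}^*$ is again a regular RS operator with finite-rank "output" $\op{B}^*$, and that the inclusion-reversing correspondence between sums of sub-eigenspaces of $\op{A}^*$ and of $\op{A}$ converts "largest contained in $\ssp{V}^\perp$" into "smallest containing $\ssp{V}$".
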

\begin{proof}
	({\bf If part}): Let $\ssp{V}=\ssp{V}_\phi\cap\ssp{V}_\text{f}^\perp$. It follows that $\ssp{W}_\psi = \ssp{V}_\phi^\perp$ can be expressed as $\ssp{W}_\psi = \overline{\spanset{\ssp{E}_i^*}_{i\in\fld{I}}}$, where $\ssp{E}_i^*$'s denote sub-eigenspaces of $\op{A}^*$ (since $\ssp{W}_\psi$ is $\fld{T}_{\op{A}^*}$-invariant). Given that $\ssp{V}_\text{f}\subseteq\dom{A^*}$, $\dim(\ssp{V}_\text{f})<\infty$ and $\overline{\ssp{W}_\psi\cap\dom{A^*}}=\ssp{W}_\psi$ (since it is $\fld{T}_\op{A^*}$-invariant), it follows that $\overline{\ssp{V}^\perp\cap\dom{A^*}}=\ssp{V}^\perp$. Also, by invoking Lemma \ref{Lem:SubspaceAdjoint} (item 4)) and the fact that $\overline{\ssp{V}\cap\dom{A}}=\ssp{V}$, we have (note that $\dim(\ima\op{B})<\infty$, and consequently $\ima\op{B} = \overline{\ima\op{B}}$)
	\begin{equation}
	\begin{split}
	\op{A}^*(\ssp{V}^\perp\cap(\ima\op{B})^\perp\cap D(\op{A}^*))\subseteq\ssp{V}^\perp.
	\end{split}
	\end{equation}
	Hence, $\ssp{V}^\perp$ is an ($\op{B}^*$,$\op{A}^*$)-invariant subspace. By invoking Theorem \ref{Thm:CA_T_Inv}, it follows that $\ssp{V}^\perp$ is $\fld{T}$-conditioned invariant with respect to ($\op{B}^*$,$\op{A}^*$), and consequently, by using Lemma \ref{Lem:DualCurtain} it follows that $\ssp{V}$ is $\fld{T}$-controlled invariant.\\
	({\bf Only if part}): Let $\ssp{V}$ be $\fld{T}$-controlled invariant. By invoking Lemma \ref{Lem:DualCurtain}, it follows that  $\ssp{V}^\perp$ is $\fld{T}$-conditioned invariant. Therefore, from Theorem \ref{Thm:CA_T_Inv} it follows that  $\ssp{V}^\perp = \ssp{W}_\psi+\ssp{W}_\mr{f}$, with $ \ssp{W}_\psi$ defined as above and $\dim(\ssp{W}_\mr{f})<\infty$.  Also, since $\dom{A^*}$ is densely defined on $\ssp{V}^\perp$  (from Lemma \ref{Lem:DualCurtain}, we obtain $\ssp{V}^\perp$ is $\fld{T}_\op{A^*}$-invariant, and consequently $\overline{\ssp{V}^\perp\cap\dom{A^*}}=\ssp{V}^\perp$) and $\ssp{W}_\psi$ (since it is $\fld{T}_\op{A^*}$-invariant), one can assume $\ssp{W}_\mr{f}\subset\dom{A^*}$. Hence, $\ssp{V}= \ssp{V}_\phi\cap(\ssp{W}_\mr{f})^\perp$, where $\ssp{V}_\phi = \ssp{W}_\psi^\perp = \overline{\spanset{\ssp{E}_i}_{i\in\fld{I}}}$ and  $\ssp{W}_f\subset\dom{A^*}$. This completes the proof of the theorem.
\end{proof}
\begin{remark}\label{Rem:Lemma12}
	 Below, we emphasize that Theorem \ref{Thm:MyDual} is compatible with the currently available results in the literature. In the literature, the following main results corresponding to $\fld{T}$-controlled invariant subspaces are available.
	\begin{enumerate}
		\item As shown in \cite{Pandolfi_Disturbance_86} (Theorem 3.1) and \cite{SchmidtInv_1980} (Theorem 2.2) the necessary condition for $\fld{T}$-controlled invariant is $\overline{\ssp{V}^\perp\cap\dom{A^*}}=\ssp{V}^\perp$. Since in $\ssp{V}_\mr{f}\subset\dom{A^*}$, this result is compatible with Theorem \ref{Thm:MyDual} (only if part).
		\item  In \cite{Byrnes1998} it is shown that for  single-input single-output (SISO) systems if $c\in\dom{A^*}$ and $<c,b>\neq0$, then the subspace $\ker\op{C}$ is $\fld{T}$-controlled invariant, where $\op{C} = <c,\cdot>$, and the corresponding bounded feedback gain is given by $\op{F} = -\frac{<\op{A}^*c,\cdot>}{<c,b>}$.  Now, we show that this result and Theorem \ref{Thm:MyDual} are consistent. Since $\op{X}=\ker\op{C}+\ima\op{B}$, $\ssp{V}=\ker\op{C}$ is \AB-invariant and consequently feedback \AB-invariant (by invoking Lemma \ref{Lem:AB_Equivalency}). Moreover, $\ssp{V} = \op{X}\cap(\ima\op{C}^*)^\perp$ (note that $\ima\op{C}^*=\mr{span}\{c\}$), and hence from Theorem \ref{Thm:MyDual} (since $\op{X}$ is obtained as sum of all sub-eigenspaces of $\op{A}$ and $c\in\dom{A^*}$, one can set $\ssp{V}_\phi=\op{X}$ and $\ssp{V}_\mr{f}=\mr{span}\{c\}$), $\ssp{V}$ is $\fld{T}$-controlled invariant. In other words, sufficient conditions of Theorem \ref{Thm:MyDual} are also compatible with the result in \cite{Byrnes1998} (for SISO systems).
		\item Note that  $\ssp{V}_\mr{f}\subset\dom{A^*}$ is a crucial condition. Similar to the above analysis, consider a SISO system and the subspace $\ssp{V} = \op{X}\cap(\ima\op{C}^*)^\perp$. Assume that $c\notin\dom{A^*}$, and consequently the feedback introduced in \cite{Byrnes1998} (i.e., $\op{F}_1x = -\frac{<c,\op{A}x>}{<c,b>}$) is not bounded. In fact $\ssp{V}$ is not $\fld{T}$-invariant (since it does not satisfy the necessary condition in \cite{Pandolfi_Disturbance_86} (Theorem 3.1)).
		\begin{enumerate}
			\item It should be pointed out that although one can still construct another bounded feedback $\op{F}_2$ as derived in the proof of Lemma \ref{Lem:AB_Equivalency} so that $\ssp{V}$ is feedback \AB-invariant, however,  even with this bounded feedback, $\ssp{V}$ is not $\fld{T}$-controlled invariant (since $\ssp{V}$ does not satisfy the necessary conditions).
			\item It can be shown that $\op{F}_2$ (as constructed in Lemma \ref{Lem:AB_Equivalency}) is expressed as $\op{F}_2 = \alpha(\op{F}_1^*)^*$, where $\alpha\in\fld{R}$ is determined based on $c$ and $w_1\in\dom{A}$ as used in Lemma \ref{Lem:AB_Equivalency} ($\ssp{W}_f=\spanset{w_1}$). Therefore, even if $\op{F}_1$ is unbounded,  $\op{F}_1^*:\fld{R}\rightarrow\op{X}$ is bounded (since it is defined on the Fin-D vector space $\fld{R}$), and consequently $(\op{F}_1^*)^*$ is bounded.
		\end{enumerate}
		
	\end{enumerate}
	
\end{remark}
	
  % Indeed, Assumption 1 has been provided to guarantee that $\overline{\ker\op{C}\cap\dom{A}}=\ker\op{C}$.
%In this section, we first provided various definitions of conditioned invariant subspaces and then for controlled invariant subspace and  the duality problem was addressed. By using the duality property necessary and sufficient conditions for $\fld{T}$-controlled invariant subspaces have also been derived and developed.
%================================================================================
%======================FDI======================
\section{Fault Detection and Isolation (FDI) Problem}\label{Sec:FDI}
In this section,  we first formulate the FDI problem for the RS system \eqref{Eq:GeneralID} and then the methodology that was developed in the previous section is utilized to derive and provide  necessary and sufficient conditions for solvability (formally defined in Remark \ref{Rem:FDIName1}) of the FDI problem.
\subsection{The FDI Problem Statement}\label{Sec:FDIProblem}
Consider the following regular RS system
\begin{equation}\label{Eq:FaultySys}
\begin{split}
\dot{x}(t)&=\op{A}x(t)+\op{B}u(t)+\sum_{i=1}^p\op{L}_if_i(t),\\
y(t) &= \op{C}x(t),
\end{split}
\end{equation}
where $\op{L}_i$'s and $f_i$'s ($f_i(t)\in\fld{R}$) denote the fault signatures and signals, respectively. The other variables and operators are defined as in the model \eqref{Eq:GeneralID}.
The FDI problem is specified in terms of generating a set of residual signals, denoted by $r_i(t)$ , $i=1,\cdots,p$ such that each residual signal $r_i(t)$ is decoupled from the external input and all the faults, except one fault $f_i(t)$. In other words, the residual signal $r_i(t)$ satisfies the following conditions for all $u(t)$ and $f_j$ ($j\neq i$)
\begin{subequations}\label{Eq:FDICond}
	\begin{align}
	\mathrm{if}&\; f_i= 0 \; \Rightarrow r_i \rightarrow 0\;(\text{stability\; and\; decoupling\; condition}), \label{Eq:FDICondDecouple}\\
	\mathrm{if}&\; f_i\neq 0 \;\Rightarrow r_i\neq 0 .\label{Eq:FDICondCouple}
	\end{align}
\end{subequations}

The residual signal $r_i(t)$ is to be generated from the following dynamical detection filter
\begin{equation}\label{Eq:Filter}
\begin{split}
\dot{\omega}_i(t) &=\op{A}_o\omega_i(t)+ \op{B}_o u(t)+ \op{E}_iy(t),\\
r_i(t) &= H_iy(t) - \op{M}_i\omega_i(t),
\end{split}
\end{equation}
where $\omega_i\in\op{X}_o^i$, $\op{X}_o^i$ is a separable Hilbert space (Fin-D or Inf-D), and  $\op{A}_o$  is a regular RS operator.  The operators  $\op{B}_o$, $\op{E}_i$, $\op{M}_i$ and $H_i$ are closed operators with appropriate domains and codomains (for example, $\op{A}_o:\op{X}_o^i\rightarrow\op{X}_o^i$ and $\op{E}_i:\fld{R}^q\rightarrow\op{X}_o^i$). In this work, we investigate, develop, and derive conditions for constructing the detection filter \eqref{Eq:Filter} by utilizing invariant subspaces such that the condition \eqref{Eq:FDICond} is satisfied.

\begin{remark}\label{Rem:FDIName1}
Design of the detection filter \eqref{Eq:Filter} involves satisfying two main requirements:
	\begin{enumerate}
		\item The residual signal $r_i(t)$ should be decoupled from all faults except $f_i(t)$.
		%Derive a subsystem of the  model \eqref{Eq:FaultySys} such that it is decoupled from all faults but $f_j$ (this achieves fault decoupling).
		%This implies the existence of two maps $\op{D}$ and $H$ such that all fault signatures but $\op{L}_j$ (i.e. $\op{L}_i$'s for all $i=1,\cdots,p$ and $i\neq j$) are members of the unobservable subspace of the system ($H\op{C}$, $\op{A+DC}$).
		\item The corresponding filter error dynamics (where error is defined as  the difference between the detection filter state and the corresponding RS system state) should be stable.
		%Next design a filter  for the above subsystem such that the stability of its error dynamics is guaranteed.
	\end{enumerate}
	If the first requirement is satisfied, we say that the fault $f_i$ is \underline{detectable and isolable}. However, the FDI problem is said to be \underline{solvable} if \underline{both} requirements are simultaneously satisfied.
\end{remark}

%\begin{remark}\label{Rem:FDIName}
%	Design of the detection filter \eqref{Eq:Filter} involves satisfying two main requirements:
%	\begin{enumerate}
%		\item The residual signal $r_i(t)$ should be decoupled from all faults except $f_i(t)$.
%		%Derive a subsystem of the  model \eqref{Eq:FaultySys} such that it is decoupled from all faults but $f_j$ (this achieves fault decoupling).
%		%This implies the existence of two maps $\op{D}$ and $H$ such that all fault signatures but $\op{L}_j$ (i.e. $\op{L}_i$'s for all $i=1,\cdots,p$ and $i\neq j$) are members of the unobservable subspace of the system ($H\op{C}$, $\op{A+DC}$).
%		\item The corresponding filter error dynamics should be stable.
%		%Next design a filter  for the above subsystem such that the stability of its error dynamics is guaranteed.
%	\end{enumerate}
%	If the first requirement is satisfied, we say that the fault $f_i$ is \underline{detectable and isolable}. However, the FDI problem is said to be \underline{solvable} if \underline{both} requirements are satisfied simultaneously.\qed
%\end{remark}

In the next subsection, we  derive necessary and sufficient conditions  for solvability of the FDI problem for the RS system \eqref{Eq:FaultySys}.
\subsection{Necessary and Sufficient Conditions}
As stated above, the FDI problem can be cast as that of designing dynamical detection filters having the structure \eqref{Eq:Filter} such that each detection filter output is  decoupled from  all faults but one. By augmenting the RS system \eqref{Eq:FaultySys} and the detection filter \eqref{Eq:Filter}, one can obtain the representation
\begin{equation}\label{Eq:AugSys}
\begin{split}
\dot{x}^e(t) &= \op{A}^ex^e(t) + \op{B}^eu(t) +\sum_{i=1}^p \op{L}_i^ef_i(t),\\
r_i(t) &= \op{C}^e x^e(t),
\end{split}
\end{equation}
where $x^e(t) = \bbm x\\ \omega_i\ebm\in\op{X}^e=\op{X}\oplus\op{X}_o^i$, $\op{C}^e = \bbm H_i\op{C} &-\op{M}_i\ebm$ and
\begin{equation}\label{Eq:AugSysPar}
\begin{split}
\op{A}^e &= \bbm \op{A} &0\\ \op{E}_i\op{C} &\op{A}_o\ebm\;,\; \op{B}^e = \bbm \op{B}\\ \op{B}_o\ebm,\;\;
\op{L}_i^e = \bbm \op{L}_i\\0\ebm.
\end{split}
\end{equation}

First, let us present the following important lemma.
\begin{lemma}\label{Lem:AugSemigroup}
	Assume that the operators $\op{A}_{11}:\op{X}_1\rightarrow\op{X}_1$ and $\op{A}_{22}:\op{X}_2\rightarrow\op{X}_2$ are  infinitesimal generators of two $C_0$ semigroups $\fld{T}_{\op{A}_{11}}$ and $\fld{T}_{\op{A}_{22}}$, respectively. Let the operator $\op{A}_{21}:\op{X}_1\rightarrow\op{X}_2$ be bounded. Then
	\begin{enumerate}[label=(\alph*)]
		\item   $\op{A}_{e}= \bbm \op{A}_{11} &0\\ \op{A}_{21} &\op{A}_{22}\ebm$ is an infinitesimal generator of the following $C_0$ semigroup in $\op{X}_e=\op{X}_1\oplus\op{X}_2$
		\begin{align*}
		\fld{T}_{\op{A}} &= \bbm \fld{T}_{\op{A}_{11}} &0\\ \fld{T}_{\op{A}_{21}} &\fld{T}_{\op{A}_{22}}\ebm,\;
		\fld{T}_{\op{A}_{21}}(t)x = \int_0^t \fld{T}_{\op{A}_{22}}(t-s)\op{A}_{21}\fld{T}_{\op{A}_{11}}x ds.
		\end{align*}
		\item Moreover, if $\op{A}_{11}$ and $\op{A}_{22}$ are regular RS operators with only finitely many common eigenvalues,  then $\op{A}_{e}$ is also a regular RS operator.
	\end{enumerate}
\end{lemma}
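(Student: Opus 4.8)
The plan is to treat the two parts separately, since they rely on quite different machinery. For part (a), I would verify directly that the proposed block-triangular operator $\fld{T}_\op{A}$ is a $C_0$ semigroup whose generator is $\op{A}_e$. First I would check the semigroup property $\fld{T}_\op{A}(t+s)=\fld{T}_\op{A}(t)\fld{T}_\op{A}(s)$ by a block multiplication; the only nontrivial block is the $(2,1)$ entry, where one must show that the convolution-type integral $\fld{T}_{\op{A}_{21}}(t)$ satisfies the requisite cocycle identity. This follows from the semigroup properties of $\fld{T}_{\op{A}_{11}}$ and $\fld{T}_{\op{A}_{22}}$ together with a change of variables in the integral, using crucially the boundedness of $\op{A}_{21}$ so that all the integrals are well defined and the integrand is strongly continuous. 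Strong continuity at $t=0^+$ and $\fld{T}_\op{A}(0)=\op{I}$ are immediate from the corresponding properties of the diagonal blocks and the fact that $\fld{T}_{\op{A}_{21}}(t)x\to 0$ as $t\to 0^+$ (the integrand is bounded and the interval of integration shrinks). Finally I would compute the generator by differentiating $\fld{T}_\op{A}(t)x$ at $t=0$ for $x\in\dom{A_{11}}\oplus\dom{A_{22}}$; the diagonal blocks yield $\op{A}_{11}$ and $\op{A}_{22}$, and differentiating the integral defining $\fld{T}_{\op{A}_{21}}$ produces the bounded term $\op{A}_{21}$ in the $(2,1)$ position, so the generator is exactly $\op{A}_e$ on this domain. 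A cleaner alternative would be to invoke the bounded perturbation theorem: write $\op{A}_e = \mr{diag}(\op{A}_{11},\op{A}_{22}) + \bbm 0 &0\\ \op{A}_{21} &0\ebm$, where the diagonal part generates $\mr{diag}(\fld{T}_{\op{A}_{11}},\fld{T}_{\op{A}_{22}})$ and the off-diagonal part is bounded, so $\op{A}_e$ generates a $C_0$ semigroup by \cite{Curtain_Book}; the explicit Duhamel (variation-of-parameters) formula for the perturbed semigroup then reproduces the stated $\fld{T}_{\op{A}_{21}}$.

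For part (b), the goal is to show $\op{A}_e$ is a regular RS operator. The natural route is to view $\op{A}_e$ as a bounded perturbation of $\op{A}_d=\mr{diag}(\op{A}_{11},\op{A}_{22})$, which is itself a regular RS operator on $\op{X}_e$ (its eigenvalues are the union $\sigma(\op{A}_{11})\cup\sigma(\op{A}_{22})$, and its eigenvectors are the obvious embeddings of the eigenvectors of the two blocks, which form a Riesz basis of the direct sum). Then I would appeal to the criterion recalled in Remark \ref{Rem:CondOnDefRegRSOp}: a bounded perturbation of a regular RS operator is again regular RS provided the eigenvalue-separation condition $\sum_i \tfrac{1}{d_i^2}<\infty$ holds for $\op{A}_d$, where $d_i=\inf_{\lambda\in\sigma(\op{A}_d)-\{\lambda_i\}}|\lambda-\lambda_i|$. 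Here the hypothesis that $\op{A}_{11}$ and $\op{A}_{22}$ share only finitely many common eigenvalues is exactly what I would use to control the separation: each block individually satisfies the summability condition (as assumed throughout the paper), and interleaving the two spectra can only shrink finitely many of the gaps to zero, so after removing those finitely many indices the tail sum of $1/d_i^2$ remains finite. Since regularity in Definition \ref{Def:RegularRS} tolerates finitely many repeated or multiple eigenvalues, the finitely many coincidences are harmless.

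The main obstacle I anticipate is the eigenvalue-separation bookkeeping in part (b): I must argue carefully that merging the two spectra $\sigma(\op{A}_{11})$ and $\sigma(\op{A}_{22})$ preserves the summability $\sum_i 1/d_i^2<\infty$, using only the finiteness of the overlap. The key quantitative point is that for any eigenvalue $\lambda_i$ of $\op{A}_d$, the new nearest-neighbour distance in the merged spectrum is at least the distance to the nearest \emph{distinct} eigenvalue; coincidences contribute to multiplicity (allowed, since only finitely many) rather than to vanishing gaps, and for the infinitely many simple eigenvalues the separation is bounded below by the separation within each individual block, whose reciprocal-squares already sum. Once this estimate is in place, Remark \ref{Rem:CondOnDefRegRSOp} (i.e. \cite{Schwartz1954}, Theorem 1) closes part (b) immediately, and combining with part (a) establishes that $\op{A}_e$ both generates a $C_0$ semigroup and is regular RS, completing the proof.
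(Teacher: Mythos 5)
Your proposal follows essentially the same route as the paper: part (a) via the bounded-perturbation / variation-of-parameters argument (the paper simply cites Proposition 4.7 of its reference for this), and part (b) by writing $\op{A}_e$ as the diagonal operator $\op{A}_d=\mr{diag}(\op{A}_{11},\op{A}_{22})$ plus a bounded off-diagonal block and invoking the perturbation criterion of Remark \ref{Rem:CondOnDefRegRSOp}. One caveat on your separation bookkeeping in part (b): the claim that for the simple eigenvalues ``the separation is bounded below by the separation within each individual block'' is not literally true --- an eigenvalue of $\op{A}_{11}$ may lie arbitrarily close to a \emph{distinct} eigenvalue of $\op{A}_{22}$, so the merged gaps $d_i$ can be far smaller than the within-block gaps, and $\sum_i 1/d_i^2<\infty$ does not follow from the two blocks' summability together with finiteness of the overlap alone. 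The paper's own proof has the same lacuna (it only treats exactly coincident eigenvalues), so this is not a divergence from the paper's argument, but be aware that an additional separation hypothesis between $\sigma(\op{A}_{11})$ and $\sigma(\op{A}_{22})$ is implicitly being used.
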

\begin{proof}
	(a) This follows from the Proposition 4.7 in \cite{DynFeedbackInf_Book}.\\
	(b) We first show that the operator $\op{A}_d = \bbm\op{A}_{11} &0\\ 0 &\op{A}_{22}\ebm$ is a regular RS with a finitely many multiple (repeated) eigenvalues. It can be shown that $\lambda$ is an eigenvalue of $\op{A}_d$ if and only if $\lambda$ is an eigenvalue of $\op{A}_{11}$ or $\op{A}_{22}$. Hence, $\op{A}_d$ is an operator with finitely many multiple (repeated) eigenvalues. Moreover, each generalized eigenvector of $\op{A}_d$ can be expressed as $\bbm \phi_1\\ 0\ebm$ or $\bbm 0\\ \phi_2\ebm$, where $\phi_1$ and $\phi_2$ denote the generalized eigenvectors of $\op{A}_{11}$ and $\op{A}_{22}$, respectively. It follows that $\ssp{P}_i^d = \op{Q}\ssp{P}_i$ (where $\ssp{P}_i$ is an eigenspace of the operator $\op{A}_{11}$ and $\op{Q}$ is an embedding operator such that $\op{Q}:\op{X}_1\rightarrow\op{X}_e$ and $\op{Q}x= \bbm x\\0\ebm$) is an eigenspace of $\op{A}^e$. Furthermore, the same result holds for the eigenspaces of $\op{A}_{22}$. Hence, it can be shown that the condition (3) in Definition \ref{Def:RegularRS} is satisfied.  Finally, we show that the inequality that is defined in Remark \ref{Rem:CondOnDefRegRSOp} holds. If $\lambda_i\in\sigma(\op{A}_{11})\cap\sigma(\op{A}_{22})$, we select $d_i = \min(\inf_{\lambda\in\sigma(\op{A}_{11})-\lambda_i}|\lambda-\lambda_i|, \inf_{\lambda\in\sigma(\op{A}_{22})}|\lambda-\lambda_i|)$. Since the number of common eigenvalues of $\op{A}_{11}$ and $\op{A}_{22}$ is finite, it follows that the inequality in Remark \ref{Rem:CondOnDefRegRSOp} is satisfied, and consequently $\op{A}_d$ is a regular RS with a finitely many multiple (repeated) eigenvalues. Given that the operator $\bbm 0 &0\\\op{A}_{21} &0\ebm$ is bounded (with a bound equal to the bound of $\op{A}_{21}$), and  by invoking Remark \ref{Rem:CondOnDefRegRSOp}, it follows that the operator $\op{A}_{e}$ is a regular RS operator. This completes the proof of the lemma.
\end{proof}

Note that $\op{A}_o$ in  \eqref{Eq:Filter} is assumed to be a regular RS operator and the operator $\op{E}$ (and consequently $\op{EC}$) is a bounded operator. If $\op{A}_o$ and $\op{A}$ have only finitely many common eigenvalues,  by invoking Lemma \ref{Lem:AugSemigroup} it follows that  $\op{A}^e$, as per equation  \eqref{Eq:AugSysPar}, is an infinitesimal generator of a $C_0$ semigroup, and also a regular RS operator.
Next,  we need to establish an important relationship between the unobservable subspace of the system \eqref{Eq:AugSys} and the unobservability subspace of the system \eqref{Eq:FaultySys} as shown in the following lemma.

\begin{lemma}\label{Lem:AugUnobserSpace2Small}
	Consider the augmented system \eqref{Eq:AugSys} and let $\ssp{N}^e=<\ker \op{C}^e|\fld{T}_{\op{A}^e}>$. Then, $\op{Q}^{-1}\ssp{N}^e$ is an unobservability subspace of the system \eqref{Eq:FaultySys}, where $\op{Q}$ is the embedding operator.
\end{lemma}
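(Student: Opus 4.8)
The plan is to exhibit explicit bounded operators $\op{D}:\fld{R}^q\rightarrow\op{X}$ and $H:\fld{R}^q\rightarrow\fld{R}^{q_h}$ for which $\op{S}:=\op{Q}^{-1}\ssp{N}^e$ satisfies $\op{S}=<\ker H\op{C}\,|\,\SG{T}{A+DC}>$, i.e. exactly Definition \ref{Def:UnobservabilitySpace}, item 2. First I would record the structure. Under the standing assumption that $\op{A}$ and $\op{A}_o$ share only finitely many eigenvalues, Lemma \ref{Lem:AugSemigroup} makes $\op{A}^e$ a regular RS operator generating $\fld{T}_{\op{A}^e}$, so $\ssp{N}^e=<\ker\op{C}^e\,|\,\fld{T}_{\op{A}^e}>$ is the \emph{largest} $\fld{T}_{\op{A}^e}$-invariant subspace in $\ker\op{C}^e$ and, by Corollary \ref{Col:A-T-Inv}, a closed span of sub-eigenspaces of $\op{A}^e$. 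Since $\op{C}^e\op{Q}x=H_i\op{C}x$, we have $\op{Q}^{-1}\ker\op{C}^e=\ker H_i\op{C}$; hence taking $H:=H_i$ gives at once $\op{S}\subseteq\ker H_i\op{C}$, and $\op{S}$ is closed as the preimage of a closed subspace under the bounded map $\op{Q}$.

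Next I would construct $\op{D}$. For $x\in\op{S}\cap\dom{A}$ one computes $\op{A}^e\op{Q}x=\op{Q}\op{A}x+[\,0\;\;\op{E}_i\op{C}x\,]^\tran$, and invariance of $\ssp{N}^e$ gives $[\,\op{A}x\;\;\op{E}_i\op{C}x\,]^\tran\in\ssp{N}^e$, hence also $[\,-\op{A}x\;\;-\op{E}_i\op{C}x\,]^\tran\in\ssp{N}^e$. Because $\op{C}$ is finite rank, $\op{C}(\op{S}\cap\dom{A})$ is a finite-dimensional subspace of $\fld{R}^q$; choosing a basis $y_j=\op{C}x_j$ with $x_j\in\op{S}\cap\dom{A}$, setting $\op{D}y_j:=-\op{A}x_j$ and extending by $0$ on a complement defines a bounded $\op{D}$ with $[\,\op{D}\op{C}x\;\;-\op{E}_i\op{C}x\,]^\tran\in\ssp{N}^e$ for every $x\in\op{S}\cap\dom{A}$. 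Adding this to $[\,\op{A}x\;\;\op{E}_i\op{C}x\,]^\tran$ yields $\op{Q}\big((\op{A}+\op{D}\op{C})x\big)\in\ssp{N}^e$, i.e. $(\op{A}+\op{D}\op{C})(\op{S}\cap\dom{A})\subseteq\op{S}$, so $\op{S}$ is feedback \CA-invariant and $\op{D}$ is a candidate friend.

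I would then upgrade this to $\fld{T}$-conditioned invariance. Using the sub-eigenspace description of $\ssp{N}^e$ (whose generators are of filter type, projecting to $0$ in $\op{X}$, or plant-coupled type, projecting to eigenvectors of $\op{A}$) together with the fact that only finitely many eigenvalues of $\op{A}$ and $\op{A}_o$ coincide or repeat, I would verify that $\op{S}=\op{S}_\phi+\op{S}_\mr{f}$ with $\op{S}_\phi$ a span of sub-eigenspaces and $\dim(\op{S}_\mr{f})<\infty$, and that $\overline{\op{S}\cap\dom{A}}=\op{S}$; Theorem \ref{Thm:CA_T_Inv} then makes $\op{S}$ $\fld{T}$-conditioned invariant, and Lemma \ref{Lem:DiffOp4CondInv} promotes the $\op{D}$ above to a genuine friend, so $\SG{T}{A+DC}\op{S}\subseteq\op{S}$.

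Finally, for maximality I must show $\op{S}$ is the \emph{largest} $\SG{T}{A+DC}$-invariant subspace inside $\ker H_i\op{C}$. As $\op{A}+\op{D}\op{C}$ is regular RS (Remark \ref{Rem:CondOnDefRegRSOp}), Corollary \ref{Col:UnObs_Span} reduces the claim to showing that every sub-eigenspace $\ssp{E}$ of $\op{A}+\op{D}\op{C}$ with $\ssp{E}\subseteq\ker H_i\op{C}$ obeys $\op{Q}\ssp{E}\subseteq\ssp{N}^e$. I would prove this by forming the smallest $\fld{T}_{\op{A}^e}$-invariant subspace $\ssp{Z}$ containing $\op{Q}\ssp{E}$ and showing $\ssp{Z}\subseteq\ker\op{C}^e$; maximality of $\ssp{N}^e$ then gives $\ssp{Z}\subseteq\ssp{N}^e$, hence $\ssp{E}\subseteq\op{S}$. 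The inclusion $\{0\}\oplus<\ker\op{M}_i\,|\,\SG{T}{A_o}>\subseteq\ssp{N}^e$, immediate from the lower-triangular form of $\op{A}^e$ and $\op{C}^e$, is the tool for absorbing the filter drift. I expect this residual-vanishing to be the main obstacle: proving $\op{C}^e\fld{T}_{\op{A}^e}(t)\op{Q}e=0$ for $e\in\ssp{E}$ and all $t\ge0$ forces careful bookkeeping of the off-diagonal semigroup term $\fld{T}_{\op{A}_{21}}$ of Lemma \ref{Lem:AugSemigroup} (here $\op{A}_{21}=\op{E}_i\op{C}$) against the $(\op{A}+\op{D}\op{C})$-dynamics and the defining property of $\op{D}$. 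A secondary technical point is the domain-density $\overline{\op{S}\cap\dom{A}}=\op{S}$ needed to invoke Theorem \ref{Thm:CA_T_Inv}. Once both are settled, $\op{S}=\op{Q}^{-1}\ssp{N}^e$ equals $<\ker H_i\op{C}\,|\,\SG{T}{A+DC}>$ and is therefore an unobservability subspace of \eqref{Eq:FaultySys}.
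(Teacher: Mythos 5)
Your first three stages track the paper's proof closely and are essentially sound: the identity $\op{Q}^{-1}\ker\op{C}^e=\ker H_i\op{C}$; the explicit finite-rank feedback $\op{D}$ built from $\op{A}^e\op{Q}x=[\op{A}x\;\;\op{E}_i\op{C}x]^\tran\in\ssp{N}^e$ (the paper instead checks \CA-invariance only for $x\in\ker\op{C}$, where the cross term vanishes, and then invokes Lemma \ref{Lem:CA_equalency}); and the classification of the sub-eigenspaces of $\op{A}^e$ into plant type, filter type and common-eigenvalue type feeding into Theorem \ref{Thm:CA_T_Inv} --- this is exactly how the paper obtains $\fld{T}$-conditioned invariance, using that $\op{A}$ and $\op{A}_o$ share only finitely many eigenvalues to make the residual piece $\op{S}_\mr{f}$ finite dimensional.

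The gap is the maximality step, and it is twofold. First, the choice $H:=H_i$ is not the right one: $\ker H_i\op{C}$ can strictly contain $\overline{\op{S}+\ker\op{C}}$ (take $H_i=0$, so that $\ker H_i\op{C}=\op{X}$ while $\op{S}=\op{Q}^{-1}\ssp{N}^e$ is proper whenever $\op{E}_i\neq 0$ and $(\op{M}_i,\op{A}_o)$ is observable), and then $\op{S}$ cannot be the largest $\SG{T}{A+DC}$-invariant subspace of $\ker H_i\op{C}$ for any $\op{D}$; as the paper does in Theorem \ref{Thm:UnobsAlg-Countable}, one must instead take $H$ with $\ker H\op{C}=\overline{\op{S}+\ker\op{C}}$. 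Second, the route you propose for maximality --- proving $\op{C}^e\fld{T}_{\op{A}^e}(t)\op{Q}e=0$ for every sub-eigenspace $\ssp{E}$ of $\op{A+DC}$ inside $\ker H\op{C}$ --- is a restatement of the target $\op{Q}\ssp{E}\subseteq\ssp{N}^e$ rather than an argument for it: written out via Lemma \ref{Lem:AugSemigroup} it is the identity $H_i\op{C}\SG{T}{A}(t)e=\op{M}_i\int_0^t\fld{T}_{\op{A}_o}(t-s)\op{E}_i\op{C}\SG{T}{A}(s)e\,\mathrm{d}s$, which ties the plant flow on $\ssp{E}$ to the arbitrary filter data $(\op{A}_o,\op{E}_i,\op{M}_i)$, and nothing in your hypotheses ($\ssp{E}$ a sub-eigenspace of $\op{A+DC}$ for your particular, loosely constrained $\op{D}$, and $\ssp{E}\subseteq\ker H_i\op{C}$) forces it. The paper argues in the opposite direction: having placed $\op{S}$ in the form required by Theorem \ref{Thm:CA_T_Inv} and inside $\ker H\op{C}$, it deduces that $\op{S}$ is the largest such subspace directly from the fact that $\ssp{N}^e$ is already the largest $\fld{T}_{\op{A}^e}$-invariant subspace of $\ker\op{C}^e$, so that any strictly larger candidate downstairs would lift to a larger invariant subspace upstairs. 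You flag this step yourself as ``the main obstacle,'' and as written the proposal does not close it.
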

\begin{proof}
	Let $\op{S} =\op{Q}^{-1}\ssp{N}^e$, where $\op{Q}$ is the embedding operator as defined above. We first show that $\op{S}$ is an \CA-invariant subspace of the system \eqref{Eq:FaultySys} (that is, $\op{A}(\op{S}\cap\ker\op{C}\cap\dom{A})\subseteq\op{S}$). Let us show that $\overline{\op{S}\cap\dom{A}}=\op{S}$. Since $\ssp{N}^e$ is $\fld{T}_{\op{A}^e}$-invariant, we have $\overline{\ssp{N}^e\cap D(\op{A}^e)}=\ssp{N}^e$. Assume that $\overline{\op{S}\cap\dom{A}}\neq\op{S}$, and consequently there exits $x\in\op{S}$ and a neighborhood $B\ni x$ such that $B\cap\dom{A}=0$. It follows that $\op{Q}B\cap D(\op{A}^e)=0$ (note that $\op{Q}x = \bbm x\\0\ebm$) which is in contradiction with the fact that $\overline{\ssp{N}^e\cap D(\op{A}^e)}=\ssp{N}^e$.  Hence, $\overline{\op{S}\cap\dom{A}}=\op{S}$. Now, let $x\in(\op{S}\cap\ker\op{C}\cap\dom{A})$. Since $\ssp{N}$ is $\op{A}^e$-invariant, one can write $\op{A}^e\bbm x\\0\ebm = \bbm \op{A}x\\0\ebm\in\ssp{N}^e$. Therefore, $\op{A}x\in\op{S}$ (i.e., $\op{S}$ is \CA-invariant), and consequently $\op{S}$ is a feedback \CA-invariant subspace (according to Lemma \ref{Lem:CA_equalency}).
	
	We now show that $\op{S}$ satisfies the conditions in Theorem \ref{Thm:CA_T_Inv}.  Since $\ssp{N}^e$ is $\fld{T}_{\op{A}^e}$-invariant and $\op{A}^e$ is a regular RS operator, following the Corollary \ref{Col:UnObs_Span}  we have $\ssp{N}^e = \overline{\spanset{\ssp{E}^e_i}_{i\in\fld{I}}}$, where $\ssp{E}^e_i$'s denote the sub-eigenspaces of $\op{A}^e$. There are three possibilities for a sub-eigenspace of $\op{A}^e$ as follows:
	\begin{enumerate}
		\item $\ssp{E}_i^e=\bbm \ssp{E}_i\\0\ebm$, where  $\ssp{E}_i$ is a sub-eigenspace of $\op{A}$.
		\item $\ssp{E}_i^e=\bbm 0\\\ssp{E}_i^o\ebm$, where $\ssp{E}_i^o$ is a sub-eigenspace of $\op{A}_o$.
		\item $\ssp{E}_i^e = \bbm \ssp{E}_i\\ \ssp{E}_o\ebm$, such that $\ssp{E}_i$ and $\ssp{E}_o$ are \underline{not} sub-eigenspaces of $\op{A}$ and $\op{A}_o$ (this sub-eigenspace corresponds  to the common eigenvalues of $\op{A}$ and $\op{A}_o$).
	\end{enumerate}	
	Let $\op{S}_\phi$ denote the largest subspace in the form $\op{S}_\phi =\overline{\spanset{\ssp{E}_i}_{i\in\fld{I}}}$ such that $\ssp{E}_i$ is a sub-eigenspace of $\op{A}$ that is contained in $\ker H\op{C}$. It follows that $\op{S}_\phi\subseteq\op{S}$ and $\op{S} = \overline{\op{S}_\phi+\op{S}_\mr{f}}$, where $\op{S}_\mr{f}$ is a sum of the sub-eigenspaces in the form of item 3). Since there are only finitely many common eigenvalues of $\op{A}$ and $\op{A}_o$, it follows that $\op{S}_\mr{f}$ is Fin-D. Therefore,  $\op{S}$ satisfies the condition of Theorem \ref{Thm:CA_T_Inv}, and consequently $\op{S}$ is $\fld{T}$-conditioned invariant.
		
	Finally, given that $\op{S}\subseteq\ker H\op{C}$ and $\ssp{N}^e$ is the largest $\fld{T}_{\op{A}^e}$-invariant subspace in $\ker\op{C}$, it follows that $\op{S}$ is the largest $\fld{T}$-conditioned invariant subspace contained in $\ker H\op{C}$ (i.e., $\op{S}$ is an unobservability subspace of the RS system \eqref{Eq:FaultySys}).  This completes the proof of the lemma.
\end{proof}
To clarify of existence of the subspace $\op{S}_\mr{f}$ in the above proof,  consider the following Fin-D example.

\paragraph{Example 1}\ Let us assume that $\op{A}^e$ is given by
\begin{equation}
A^e = \bbm 1 &1 &0 &0\\ 0 &1 &0 &0\\ 0 &1 &1 &0\\ 0 &0 &0 &3\ebm
\end{equation}
Also, let $L_1 =[0,1,0,0]^\tran$ and $L_2 = [1,1,1,0]^\tran$. It follows that $L_2 = A^eL_1$ and $A^eL_2 = 2L_2-L_1$. Therefore, $\ssp{E} = \spanset{L_1,L_2}$ is a sub-eigenspace of $A^e$ (corresponding to $\lambda=1$). However, $Q^{-1}\ssp{E} = \ssp{L}_1$ is not a sub-eigenspace of $A = \bbm 1 &1\\ 0&1\ebm$. This example highlights the reason why we consider $\op{S}_\mr{f}$ in the proof of the above Lemma.

In order to provide  sufficient conditions for solvability of the FDI problem, one also needs to show that the error dynamics corresponding to the designed fault detection observer is stable. The following theorem  provides  necessary and sufficient conditions for  stability of a general Inf-D system.
\begin{lemma}\label{Lem:Stability}(\cite{Curtain_Book} - Theorem 5.1.3)
	Consider the Inf-D system $\dot{e}(t) = \op{A}_e e(t)$, such that $\op{A}_e$ is an infinitesimal generator of a $C_0$ semigroup. The system is exponentially stable if and only if there exists a positive definite and bounded operator $\op{P}_e:\op{X}\rightarrow\op{X}$ such that
	\begin{equation}\label{Eq:ErrorStabilityCondition}
	<\op{A}_ez,\op{P}_ez> + <\op{P}_ez,\op{A}_ez> = -<z,z>,\; \forall z\in D(\op{A}_e)
	\end{equation}
	
\end{lemma}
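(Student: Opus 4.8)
The statement is the standard Lyapunov characterization of exponential stability for $C_0$ semigroups, so the plan is to prove the two implications separately, making the candidate operator explicit in the necessity direction.

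\emph{Sufficiency.} Suppose a positive definite bounded operator $\op{P}_e$ satisfying \eqref{Eq:ErrorStabilityCondition} exists. I would introduce the Lyapunov functional $V(z)=<\op{P}_ez,z>$ and observe that, since $\op{P}_e$ is bounded and positive definite (hence self-adjoint and coercive), there are constants $0<m\le M<\infty$ with $m||z||^2\le V(z)\le M||z||^2$ for all $z\in\op{X}$. For $z\in D(\op{A}_e)$ the orbit $\fld{T}_{\op{A}_e}(t)z$ remains in $D(\op{A}_e)$ and is differentiable with $\frac{\mr{d}}{\mr{d}t}\fld{T}_{\op{A}_e}(t)z=\op{A}_e\fld{T}_{\op{A}_e}(t)z$; differentiating $V$ along the orbit and substituting \eqref{Eq:ErrorStabilityCondition} with $z$ replaced by $\fld{T}_{\op{A}_e}(t)z$ gives $\frac{\mr{d}}{\mr{d}t}V(\fld{T}_{\op{A}_e}(t)z)=-||\fld{T}_{\op{A}_e}(t)z||^2\le-\frac{1}{M}V(\fld{T}_{\op{A}_e}(t)z)$. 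Integrating this differential inequality yields $V(\fld{T}_{\op{A}_e}(t)z)\le e^{-t/M}V(z)$, whence $||\fld{T}_{\op{A}_e}(t)z||^2\le\frac{1}{m}V(\fld{T}_{\op{A}_e}(t)z)\le\frac{M}{m}e^{-t/M}||z||^2$. Exponential decay on the dense set $D(\op{A}_e)$ then extends to all of $\op{X}$ by the uniform boundedness of $\fld{T}_{\op{A}_e}(t)$ on finite intervals.

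\emph{Necessity.} Conversely, assume exponential stability, i.e. $||\fld{T}_{\op{A}_e}(t)||\le Me^{-\omega t}$ for some $\omega>0$, and define the candidate operator $\op{P}_e x=\int_0^\infty\fld{T}_{\op{A}_e}^*(t)\fld{T}_{\op{A}_e}(t)x\,\mr{d}t$. The estimate $||\fld{T}_{\op{A}_e}^*(t)\fld{T}_{\op{A}_e}(t)||\le M^2e^{-2\omega t}$ makes this integral norm-convergent, so $\op{P}_e$ is bounded; it is self-adjoint and positive definite because $<\op{P}_ex,x>=\int_0^\infty||\fld{T}_{\op{A}_e}(t)x||^2\,\mr{d}t$, which is strictly positive for $x\neq0$ by strong continuity at $t=0$. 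To verify \eqref{Eq:ErrorStabilityCondition}, I would take $z\in D(\op{A}_e)$ and use the commutation $\fld{T}_{\op{A}_e}(t)\op{A}_ez=\op{A}_e\fld{T}_{\op{A}_e}(t)z=\frac{\mr{d}}{\mr{d}t}\fld{T}_{\op{A}_e}(t)z$ to collapse the two inner products into a total derivative,
$$<\op{A}_ez,\op{P}_ez>+<\op{P}_ez,\op{A}_ez>=\int_0^\infty\frac{\mr{d}}{\mr{d}t}||\fld{T}_{\op{A}_e}(t)z||^2\,\mr{d}t=\lim_{t\to\infty}||\fld{T}_{\op{A}_e}(t)z||^2-||z||^2=-<z,z>,$$
where the limit vanishes by exponential stability. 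This establishes the Lyapunov equation and completes the equivalence.

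The routine parts are the differential-inequality/Gr\"onwall argument and the density extension in the sufficiency direction. The main obstacle lies in the necessity direction: rigorously justifying the convergence and boundedness of the operator-valued integral defining $\op{P}_e$, and in particular interchanging the unbounded generator $\op{A}_e$ with the integral and the time-differentiation when deriving the total-derivative identity above (this requires working on $D(\op{A}_e)$, invoking that orbits of domain elements stay in the domain, and dominated convergence for the integrand $\frac{\mr{d}}{\mr{d}t}||\fld{T}_{\op{A}_e}(t)z||^2$). A secondary subtlety worth flagging is that the coercive lower bound $m||z||^2\le<\op{P}_ez,z>$ used in sufficiency is precisely the quantitative content of ``positive definite''; without it one would obtain only strong stability rather than exponential stability.
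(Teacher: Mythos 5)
The paper itself gives no proof of this lemma---it is quoted directly as Theorem 5.1.3 of the cited reference---so your proposal is effectively being compared against that source's argument. Your necessity direction reproduces it essentially verbatim (the candidate $\op{P}_e=\int_0^\infty\fld{T}_{\op{A}_e}^*(t)\fld{T}_{\op{A}_e}(t)\,\mr{d}t$, norm convergence of the integral from the exponential bound, and the total-derivative collapse on $D(\op{A}_e)$), and that half is correct.

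The sufficiency direction has a genuine gap, and it is precisely the point you flag as a ``secondary subtlety.'' Your Gr\"onwall argument needs the coercive lower bound $m\|z\|^2\le\;<\op{P}_ez,z>$, but in an infinite-dimensional Hilbert space a bounded self-adjoint operator with $<\op{P}_ez,z>$ strictly positive for all $z\neq0$ need not be coercive, and the two readings of ``positive definite'' cannot be reconciled within your own proof: the operator you construct in the necessity direction is only strictly positive, not coercive in general. Concretely, for the exponentially stable left-shift semigroup on $\fld{L}_2([0,1])$ (which vanishes identically for $t\ge1$), a function $f$ supported on $[1-\delta,1]$ gives $<\op{P}_ef,f>\;=\int_0^\delta\|\fld{T}_{\op{A}_e}(s)f\|^2\mr{d}s\le\delta\|f\|^2$, so the unique self-adjoint solution of the Lyapunov equation is not coercive even though the system is exponentially stable. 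Hence if ``positive definite'' is read as coercive, your necessity construction no longer verifies the hypothesis (and the theorem as stated would fail); if it is read as strict positivity, your sufficiency step from $V(\fld{T}_{\op{A}_e}(t)z)\le e^{-t/M}V(z)$ back to a norm estimate collapses and yields only $V\to0$, not exponential decay. The standard repair---and the route taken in the cited theorem---avoids coercivity entirely: integrate $\frac{\mr{d}}{\mr{d}t}<\op{P}_e\fld{T}_{\op{A}_e}(t)z,\fld{T}_{\op{A}_e}(t)z>\;=-\|\fld{T}_{\op{A}_e}(t)z\|^2$ over $[0,t]$, discard the nonnegative boundary term to obtain $\int_0^\infty\|\fld{T}_{\op{A}_e}(s)z\|^2\mr{d}s\le\|\op{P}_e\|\,\|z\|^2$ for every $z$, and invoke Datko's lemma, which upgrades this square-integrability of all orbits to exponential stability. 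With that substitution your argument closes.
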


We are now in the position to derive the solvability necessary and sufficient conditions for the FDI problem corresponding to the RS system \eqref{Eq:FaultySys}.
\begin{theorem}\label{Thm:General_Nes_Suff_Cond}
	Consider the regular RS system \eqref{Eq:FaultySys}. The FDI problem has a solution only if
	\begin{equation}\label{Eq:NecessaryCondition}
	\op{S}_i^*\cap\ssp{L}_i = 0,
	\end{equation}
	where $\op{S}_i^* = <\ker H_i\op{C}|\op{A+D}_i\op{C}>$ is the smallest unobservability subspace containing $\ssp{L}_j$, where $j=1,\cdots,p$ and $j\neq i$, and $\ssp{L}_i=\spanset{\op{L}_i}$. On the other hand, if the above condition is satisfied  and there exist two maps $\op{D}_o$ and $\op{P}_e$ such that $(\op{A}_{p}+\op{D}_o\op{M}_i)$ and $\op{P}_e$ satisfy the condition \eqref{Eq:ErrorStabilityCondition}, then the FDI problem is solvable where $\op{A}_{p} = (\op{A+D}_i\op{C})|_{\op{X}/\op{S}_i^*}$ (i.e., $\op{A}_{p}$ is the operator induced by $\op{A+D}_i\op{C}$ on the factor space $\op{X}/\op{S}_i^*$), and $\op{M}_i$ is the solution to $\op{M}_i\op{P}_i=H_i\op{C}$, where $\op{P}_i$ is the canonical projection from $\op{X}$ onto $\op{X}/\op{S}_i^*$.
\end{theorem}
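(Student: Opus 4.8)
The plan is to split this biconditional-style FDI solvability theorem into its two natural halves, mirroring the structure of the statement: the necessity half (the FDI problem is solvable only if $\op{S}_i^*\cap\ssp{L}_i=0$) and the sufficiency half (given the geometric condition plus the existence of a stabilizing $\op{D}_o$ and Lyapunov operator $\op{P}_e$). I would organize the argument around the augmented system \eqref{Eq:AugSys}–\eqref{Eq:AugSysPar} and the detection-filter reformulation, invoking the geometric machinery already in place—especially Theorem~\ref{Thm:CAInvAlg}, Theorem~\ref{Thm:UnobsAlg-Countable}, Lemma~\ref{Lem:AugUnobserSpace2Small}, and Lemma~\ref{Lem:Stability}.

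\medskip

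\textbf{Necessity.} First I would argue by contraposition: suppose $\op{S}_i^*\cap\ssp{L}_i\neq0$, and show the $i$-th fault cannot be isolated. The key observation is that decoupling $r_i$ from all faults $f_j$ with $j\neq i$ (condition \eqref{Eq:FDICondDecouple} with the detectability requirement of Remark~\ref{Rem:FDIName1}) forces the augmented unobservable subspace $\ssp{N}^e=\langle\ker\op{C}^e|\fld{T}_{\op{A}^e}\rangle$ to contain the images of the fault signatures $\op{L}_j^e$, $j\neq i$. By Lemma~\ref{Lem:AugUnobserSpace2Small}, $\op{Q}^{-1}\ssp{N}^e$ is an unobservability subspace of \eqref{Eq:FaultySys} containing each $\ssp{L}_j$ ($j\neq i$), and hence it contains the \emph{smallest} such unobservability subspace $\op{S}_i^*$. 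If $\op{S}_i^*\cap\ssp{L}_i\neq0$, then a nonzero component of the $i$-th fault direction lies in $\op{Q}^{-1}\ssp{N}^e$, so a suitable $f_i$ produces a state trajectory entirely within the unobservable subspace of the residual dynamics; this makes $r_i\equiv0$ even when $f_i\neq0$, violating \eqref{Eq:FDICondCouple}. Therefore solvability requires \eqref{Eq:NecessaryCondition}. Here I would lean on the fact (Remark~\ref{Rem:CondOnDefRegRSOp}, Lemma~\ref{Lem:AugSemigroup}(b)) that $\op{A+D}_i\op{C}$ and $\op{A}^e$ are again regular RS operators, so the subspace characterizations via sub-eigenspaces in Corollary~\ref{Col:UnObs_Span} and Theorem~\ref{Thm:UnobsAlg-Countable} apply.

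\medskip

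\textbf{Sufficiency.} Assuming \eqref{Eq:NecessaryCondition}, I would \emph{construct} the filter explicitly. Take $\op{S}_i^*$ with a friend $\op{D}_i\in\FrOp{D}{S_i^*}$ and an output-mixing map $H_i$ realizing $\ker H_i\op{C}=\overline{\ssp{S}_i^*+\ker\op{C}}$ (as in Theorem~\ref{Thm:UnobsAlg-Countable}). Passing to the factor space $\op{X}/\op{S}_i^*$, define $\op{A}_p=(\op{A+D}_i\op{C})|_{\op{X}/\op{S}_i^*}$ and the induced output map $\op{M}_i$ via $\op{M}_i\op{P}_i=H_i\op{C}$; since $\op{S}_i^*$ is $\fld{T}$-conditioned invariant and spanned by sub-eigenspaces of $\op{A+D}_i\op{C}$ (Theorem~\ref{Thm:UnobsAlg-Countable}, Corollary~\ref{Col:A-T-Inv}), the induced operator $\op{A}_p$ is well-defined as the infinitesimal generator of the quotient $C_0$-semigroup and is itself a regular RS operator. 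The residual dynamics then live on $\op{X}/\op{S}_i^*$, with error generated by $\op{A}_p+\op{D}_o\op{M}_i$; because $\ssp{L}_j\subseteq\op{S}_i^*$ for $j\neq i$, all off-target faults are annihilated in the quotient, securing decoupling \eqref{Eq:FDICondDecouple}, while $\op{S}_i^*\cap\ssp{L}_i=0$ guarantees $f_i$ survives the projection, giving \eqref{Eq:FDICondCouple}. Finally, the hypothesis that $\op{D}_o$ and $\op{P}_e$ satisfy \eqref{Eq:ErrorStabilityCondition} for $\op{A}_p+\op{D}_o\op{M}_i$ delivers exponential stability of the error dynamics by Lemma~\ref{Lem:Stability}, so $r_i\to0$ when $f_i=0$; both requirements of Remark~\ref{Rem:FDIName1} hold and the FDI problem is solvable.

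\medskip

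\textbf{The main obstacle} I anticipate is making the quotient construction rigorous in the Inf-D setting: one must verify that $\op{A+D}_i\op{C}$ induces a \emph{well-defined, closed} generator $\op{A}_p$ of a $C_0$-semigroup on $\op{X}/\op{S}_i^*$, that $\op{M}_i$ is well-defined (i.e.\ $H_i\op{C}$ factors through the projection $\op{P}_i$, which requires $\op{S}_i^*\subseteq\ker H_i\op{C}$—exactly the property built in via $\ker H_i\op{C}=\overline{\ssp{S}_i^*+\ker\op{C}}$), and that faults map correctly through the quotient so that detectability and decoupling are simultaneously preserved. The delicate point is that $\op{S}_i^*$ need not be contained in $\dom{A}$, so passing to the quotient and claiming regular-RS structure for $\op{A}_p$ must be justified through the sub-eigenspace decomposition (Theorem~\ref{Thm:UnobsAlg-Countable}) rather than naively—this is where the finite-rank output operator and the regularity of $\op{A+D}_i\op{C}$ (Remark~\ref{Rem:CondOnDefRegRSOp}) do the essential work.
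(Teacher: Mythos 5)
Your proposal is correct and follows essentially the same route as the paper's proof: necessity via the augmented system \eqref{Eq:AugSys}, the containment $\ssp{L}_j^e\subseteq\ssp{N}^e$ forced by decoupling, Lemma~\ref{Lem:AugUnobserSpace2Small}, and minimality of $\op{S}_i^*$; sufficiency via the explicit quotient filter on $\op{X}/\op{S}_i^*$ with error dynamics stabilized through Lemma~\ref{Lem:Stability}. The only cosmetic difference is that you phrase necessity contrapositively and flag the well-definedness of the quotient generator as an obstacle, a point the paper itself passes over without further justification.
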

\begin{proof}
	({\bf Only if part}): We consider, without loss of generality, that  the system \eqref{Eq:FaultySys} is subject to two faults $f_1$ and $f_2$. Assume that the detection filter \eqref{Eq:Filter} is designed such that the residual (that is, the output of the filter) is decoupled from the fault $f_2$ but requires to be sensitive to the fault $f_1$. By considering the augmented system \eqref{Eq:AugSys}, it is necessary that $\ssp{L}_{2}^e=\spanset{\op{L}_2^e}\subseteq\ssp{N}^e$, ($\op{L}_2^e$ is defined in \eqref{Eq:AugSysPar}) where $\ssp{N}^e$ is the unobservable subspace of \eqref{Eq:AugSys}. By invoking Lemma \ref{Lem:AugUnobserSpace2Small}, the subspace $\op{S}=\op{Q}^{-1}\ssp{N}^e$ is an unobservability subspace of the pair ($\op{C}$,$\op{A}$) containing $Q^{-1}\ssp{L}_2^e=\ssp{L}_2$. Moreover, in order to detect the fault $f_1$ (which can be an arbitrary function of time), it is necessary that $\ssp{N}^e\cap\ssp{L}_1^e=0$ . Hence, $\op{S}\cap\ssp{L}_1 = 0$. Since $\op{S}^*_1$ is the minimal unobservability subspace containing $\ssp{L}_2$ (i.e., $\op{S}^*_1\subseteq\op{S}$), the necessary condition for satisfying the above condition is $\op{S}^*_1\cap\ssp{L}_1 = 0$. \\
	({\bf If part}): Assume that $\op{S}_1^*\cap\ssp{L}_1 = 0$, and let $\op{D}_1$ and $H_1$ be defined according to  $\op{S}_1^*$ (refer to the Definition \ref{Def:UnobservabilitySpace}). By definition, $\op{L}_2\subseteq\op{S}_1^*$ where $\op{S}_1^*$ is the unobservable subspace of the system ($H_1\op{C}$,$\op{A+D}_1\op{C}$). In other words, $\op{S}^*_1 = <\ker H_1\op{C}|\op{A+D}_1\op{C}>$.
	
	Now consider the canonical projection $\op{P}_1:\op{X}\rightarrow\op{X}/\op{S}_1^*$ and the following detection filter
	\begin{equation}\label{Eq:DetectionFilter}
	\begin{split}
	\dot{\omega}_1(t) = &\op{F}_1\omega_1(t) + \op{G}_1u(t) - \op{E}_1y(t)\\
	r_1(t) = &\op{M}_1\omega_1(t) - H_1y(t)
	\end{split}
	\end{equation}
	where $\op{F}_1 = \op{A}_p+\op{D}_o\op{M}_1$, $\op{G} = \op{P}_1\op{B}$ and $\op{E}_1 = \op{D}_1+\op{P}_1^{-r}\op{D}_oH_1$.  By defining the error signal as $e(t) = \op{P}_1x(t)-\omega_1(t)$, one can obtain
	\begin{equation}\label{Eq:ErrorDyn}
	\begin{split}
	\dot{e}(t) &= \op{F}_1e(t) + \op{P}_1\op{L}_{1}f_1(t),\\
	r_1(t) &=\op{M}_1e(t).
	\end{split}
	\end{equation}
	By invoking  Lemma \ref{Lem:Stability}, it follows that the error dynamics \eqref{Eq:ErrorDyn} is exponentially stable. Therefore, if $f_1\equiv0$ (for any value of $f_2$) then  $r_{1}(t)\rightarrow 0$. Otherwise, $||r_{1}(t)||\neq 0$ (which can be used for declaring the detection of the fault $f_1$). This completes the proof of the theorem.
\end{proof}

\begin{remark}\label{Rem:FDIName}
	Note that the FDI problem was solved by designing a fault detection filter to estimate $x_1$. However, unlike the Fin-D case, the condition $\ssp{N}=0$ (the unobservable subspace) is not sufficient for the existence of an observer for a general Inf-D system \cite{DynFeedbackInf_Book}. Therefore, the condition \eqref{Eq:NecessaryCondition} is not sufficient for solvability of the FDI problem, and therefore one needs the extra condition that is stated in Theorem \ref{Thm:General_Nes_Suff_Cond}.
\end{remark}

\subsection{Solvability of the FDI Problem Under Two Special Cases}\label{Sec:FDISpecialCase}
In this subsection, we investigate two special cases, where the condition \eqref{Eq:NecessaryCondition} provides a \emph{single} necessary and sufficient condition for solvability of the FDI problem.
\subsubsection{\underline{{\bf Case 1}}}\ \\
The following theorem provides a necessary and sufficient condition for  solvability of the FDI problem when the number of positive eigenvalues of the quotient subsystem is finite.
\begin{theorem}\label{Thm:SpecialCase1FDICond}
	Consider the faulty RS system \eqref{Eq:FaultySys} with $\op{C}$ specified as in equation \eqref{Eq:OuputOperator}, and let the operator $(\op{A+DC})$ have only finite number of positive eigenvalues and the operator $(\op{A+DC})|_{\op{X}/\op{E}^+}$ be asymptotically stable, where $\op{E}^+$ is the sum of eigenspaces corresponding to the positive eigenvalues. The FDI problem is solvable if and only if the condition \eqref{Eq:NecessaryCondition} holds.
\end{theorem}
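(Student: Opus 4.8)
The necessity of \eqref{Eq:NecessaryCondition} is already supplied by the only-if part of Theorem \ref{Thm:General_Nes_Suff_Cond}, so the plan is to prove only the sufficiency under the two extra hypotheses. By the if part of Theorem \ref{Thm:General_Nes_Suff_Cond}, once \eqref{Eq:NecessaryCondition} holds it suffices to produce a bounded output injection $\op{D}_o$ and a bounded positive operator $\op{P}_e$ rendering $\op{A}_p+\op{D}_o\op{M}_i$ exponentially stable in the sense of \eqref{Eq:ErrorStabilityCondition}, where $\op{A}_p=(\op{A+D}_i\op{C})|_{\op{X}/\op{S}_i^*}$ and $\op{M}_i\op{P}_i=H_i\op{C}$. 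Since $\op{D}_i\op{C}$ is bounded, $\op{A+D}_i\op{C}$ is a regular RS operator (Remark \ref{Rem:CondOnDefRegRSOp}); and as $\op{S}_i^*$ is $\fld{T}_{\op{A+D}_i\op{C}}$-invariant it is a sum of sub-eigenspaces (Corollary \ref{Col:A-T-Inv}), so the induced operator $\op{A}_p$ is again regular RS with $\sigma(\op{A}_p)\subseteq\sigma(\op{A+D}_i\op{C})$.

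First I would separate the unstable dynamics. By hypothesis $\op{A+D}_i\op{C}$ has only finitely many positive eigenvalues, hence so does $\op{A}_p$; the associated spectral projection produces a finite-dimensional $\op{A}_p$-invariant subspace $\op{E}_p^+$ and an invariant splitting $\op{X}/\op{S}_i^*=\op{E}_p^+\oplus\op{X}_p^s$ on which $\op{A}_p=\mr{diag}(\op{A}_p^+,\op{A}_p^s)$. The eigenvalues of $\op{A}_p^s$ lie among the non-positive eigenvalues of $\op{A+D}_i\op{C}$, i.e.\ among those of $(\op{A+D}_i\op{C})|_{\op{X}/\op{E}^+}$; since that restriction is (exponentially) stable, its spectrum has real parts bounded away from the imaginary axis, and therefore $\op{A}_p^s$ generates an exponentially stable $C_0$-semigroup.

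Next I would record the observability that the construction of $\op{S}_i^*$ provides. Because $\op{S}_i^*$ is the largest $\fld{T}_{\op{A+D}_i\op{C}}$-invariant subspace in $\ker H_i\op{C}$ and $\op{M}_i\op{P}_i=H_i\op{C}$, the unobservable subspace of the pair $(\op{M}_i,\op{A}_p)$ on $\op{X}/\op{S}_i^*$ is $0$: any $\op{A}_p$-invariant subspace of $\ker\op{M}_i$ pulls back through $\op{P}_i$ to an $\op{A+D}_i\op{C}$-invariant subspace of $\ker H_i\op{C}$ containing $\op{S}_i^*$, hence equals $\op{S}_i^*$. In particular, writing $\op{M}_i^+=\op{M}_i|_{\op{E}_p^+}$, every $\op{A}_p^+$-invariant subspace of $\ker\op{M}_i^+$ is contained in that trivial unobservable subspace, so the finite-dimensional pair $(\op{M}_i^+,\op{A}_p^+)$ is observable; finite-dimensional pole placement then yields a gain $\op{D}_o^+:\fld{R}^{q_h}\rightarrow\op{E}_p^+$ with $\op{A}_p^++\op{D}_o^+\op{M}_i^+$ Hurwitz.

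The hard part will be recombining the blocks without reintroducing instability through the coupling, since (as noted in Remark \ref{Rem:FDIName}) observability alone does not confer stabilizability in the Inf-D setting. Writing $\op{M}_i=\bbm\op{M}_i^+ &\op{M}_i^s\ebm$ and choosing $\op{D}_o=\bbm\op{D}_o^+\\0\ebm$ (so no injection into the stable part) makes $\op{A}_p+\op{D}_o\op{M}_i=\bbm\op{A}_p^++\op{D}_o^+\op{M}_i^+ &\op{D}_o^+\op{M}_i^s\\0 &\op{A}_p^s\ebm$ block upper-triangular with a bounded, finite-rank off-diagonal term. Both diagonal blocks generate exponentially stable semigroups, so by the triangular semigroup formula of Lemma \ref{Lem:AugSemigroup}(a) the coupling contributes the convolution $\int_0^t\fld{T}_+(t-s)\,\op{D}_o^+\op{M}_i^s\,\fld{T}_s(s)\,\mr{d}s$, which is dominated by the product of the two exponential bounds and hence also decays exponentially; thus $\op{A}_p+\op{D}_o\op{M}_i$ is exponentially stable. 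Lemma \ref{Lem:Stability} then furnishes the operator $\op{P}_e$ solving \eqref{Eq:ErrorStabilityCondition}, and the if part of Theorem \ref{Thm:General_Nes_Suff_Cond} completes the proof.
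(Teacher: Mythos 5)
Your proposal is correct and follows essentially the same route as the paper's own proof: split off the finite-dimensional subspace spanned by the positive eigenvalues, use the triviality of the unobservable subspace of $(\op{M}_i,\op{A}_p)$ to get observability of the finite-dimensional pair $(\op{M}_i^+,\op{A}_p^+)$, place its poles, and extend the gain by zero on the stable part. You merely spell out two steps the paper leaves implicit --- why an $\op{A}_p^+$-invariant subspace of $\ker\op{M}_i^+$ embeds into the (trivial) unobservable subspace, and why the block upper-triangular coupling term does not destroy stability via the convolution formula of Lemma \ref{Lem:AugSemigroup}(a) --- which is a welcome tightening rather than a different argument.
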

\begin{proof}
	({\bf if part}): Consider the detection filter \eqref{Eq:DetectionFilter}. As stated above, the observer gain $\op{D}_o$ is designed such that the operator $\op{A}_p+\op{D}_o\op{M}_1$ is asymptotically stable.
	%	Since the number of positive eigenvalues of $(\op{A+DC})$ is finite, the same holds for $\op{A}_{p}$. Since the eigenspace corresponding to all positive eigenvalues of $\op{A}_{p}$ (that is denoted by $\op{X}_1^+$) is Fin-D, we obtain $\op{X}/\op{S}_1 = \op{X}_1^+ \oplus\op{X}_1^-$, where  $\op{X}_1^+$ is Fin-D and $\op{X}_1^-$ represents the sum of all eigenspaces corresponding to the negative eigenvalues. It follows that one can represent the system \eqref{ChRS:Eq:FactSubSys} according to
	%	\begin{align}\label{ChRS:Eq:FactSubsys_Decompos}
	%	\dot{x}_1^+(t) & = {A}_{11}^+x_1^+(t) + {B}_1^+ u(t) + {L}_{11}^+f_1(t),\notag\\
	%	\dot{x}_1^-(t) & = \op{A}_{11}^-x_1^-(t) + \op{B}_1^- u(t) + \op{L}_{11}^-f_1(t),\notag\\
	%	y_1(t) &= [{M}^+,\;\op{M}^-]\bbm x_1^+(t)\\ x_1^-(t)\ebm,
	%	\end{align}
	%	where $x_1^+(t)\in\op{X}_1^+$ and $x_1^-(t)\in\op{X}_1^-$. Note that the operators ${A}_{11}^+$, ${M}^+$ are defined on a Fin-D vector space (that is, $\op{X}_1^+$).
	Given that the unobservable subspace of the system ($\op{M}_1$, $\op{A}_p$) is zero (since it is obtained by factoring out $\op{S}_1^*$), the  Fin-D pair ($M_1^+$, $A_{p}^+$) (that are induced from $\op{M}_1$ and $\op{A}_p$ on $\op{X}_1^+$) is observable. Therefore, there exists an operator ${D}_o:\fld{R}^{q_h}\rightarrow\op{X}_1^+$ ($q_h = \rank(H_1\op{C})$) such that all the eigenvalues of $A_{p}^++D_oM_1^+$ are negative. By invoking  the asymptotic stability of $\op{A}_{p}^-$, and considering $\op{D}_o$ as the extension of $D_o$, one can show that the error dynamics \eqref{Eq:ErrorDyn} is asymptotically stable. By following along the same lines as in the proof of Theorem \ref{Thm:General_Nes_Suff_Cond}, it follows that the FDI problem is solvable.\\
	({\bf only if part}): This follows from the results that are stated in Theorem \ref{Thm:General_Nes_Suff_Cond}.\\
	This completes the proof of the theorem.
\end{proof}
\subsubsection{\underline{{\bf Case 2}}}\ \\
In this case, the faulty RS system \eqref{Eq:FaultySys} is specified according to the operator given by equation \eqref{Eq:OuputOperator}, however $c_i$'s are governed and restricted to
\begin{equation}\label{Eq:NewOuputOperator}
\begin{split}
c_i = \sum_{i=1}^{n_c}\zeta_{i,j}\psi_j.
\end{split}
\end{equation}
In other words, the $c_i$ vectors lie on a finite dimensional subspace of $\op{X}$. Since $<\phi_i,\psi_j> = \delta_{ij}$, it follows that $\op{C}\phi_i=0$ for all $i>n_c$. Therefore, $\mathrm{span}\SRB{\phi}{n_c+1}\subseteq\ker\op{C}$, and consequently, $\ker \op{C} = \overline{\ssp{C}^0_f\oplus\mathrm{span}\SRB{\phi}{n_c+1}}$, where $\ssp{C}^0_f\subseteq\spanset{\{\phi_j\}_{j=1}^{n_c}}$. By invoking Lemma \ref{Lem:SumClosedSpace} and the fact that  $\dim(\ssp{C}^0_f)<\infty$, we have   $\ker\op{C}=\ssp{C}^0_f\oplus\overline{\mathrm{span}\SRB{\phi}{n_c+1}}$.
Since every $\overline{\SRB{\phi}{n_c+1}}\subseteq\ker\op{C}$ is also $\SG{T}{A+DC}$-invariant and contained in $\ker H\op{C}$, it follows that the unobservability subspace $\op{S}$ containing a given subspace $\ssp{L}$ necessarily contains the Inf-D subspace $\overline{\SRB{\phi}{n_c+1}}$. Therefore, the factored out quotient subsystem ($\op{M}_1$, $\op{A}_p$) is Fin-D and one can provide necessary and sufficient conditions for solvability of the FDI problem. The following theorem summarizes this result.
\begin{theorem}\label{Thm:SpecialCase2FDICond}
	Consider the faulty system \eqref{Eq:FaultySys} that is assumed to be an RS system and specified according to the output operator \eqref{Eq:NewOuputOperator}. The FDI problem is solvable if and only if $\op{S}^*_i\cap\ssp{L}_i = 0$, where $\op{S}^*_i$ is the smallest unobservability subspace containing $\ssp{L}_j$, $j=1,\cdots,p$ and $j\neq i$.
\end{theorem}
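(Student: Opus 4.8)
The plan is to derive this theorem as an immediate specialization of Theorem~\ref{Thm:General_Nes_Suff_Cond}, the point being that under the restricted output operator \eqref{Eq:NewOuputOperator} the auxiliary stabilizability hypothesis in the ``if'' part of that theorem is \emph{automatically} satisfied. The ``only if'' direction needs no new argument: condition \eqref{Eq:NecessaryCondition} is already shown to be necessary for solvability in Theorem~\ref{Thm:General_Nes_Suff_Cond}, and this carries over verbatim. Hence the whole content resides in the ``if'' direction, where I would show that the induced quotient pair $(\op{M}_i,\op{A}_p)$ on $\op{X}/\op{S}_i^*$ is finite-dimensional, after which a stabilizing injection $\op{D}_o$ and a Lyapunov certificate $\op{P}_e$ always exist.

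The first and decisive step is to establish that $\op{X}/\op{S}_i^*$ is Fin-D. From \eqref{Eq:NewOuputOperator} and the biorthonormality $<\phi_i,\psi_j>=\delta_{ij}$ one gets $\op{C}\phi_i=0$ for every $i>n_c$, so $\overline{\mathrm{span}\SRB{\phi}{n_c+1}}\subseteq\ker\op{C}$. For the friend $\op{D}_i$ realizing $\op{S}_i^*$, the operator $\op{D}_i\op{C}$ annihilates every tail vector, so the chain relations defining the (generalized) eigenvectors survive the perturbation and $\overline{\mathrm{span}\SRB{\phi}{n_c+1}}$ remains a sum of sub-eigenspaces of $\op{A+D}_i\op{C}$. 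By Corollary~\ref{Col:A-T-Inv} it is therefore $\fld{T}_{\op{A+D}_i\op{C}}$-invariant; being contained in $\ker\op{C}\subseteq\ker H_i\op{C}$, it must lie inside the largest such invariant subspace, namely $\op{S}_i^* = <\ker H_i\op{C}\,|\,\fld{T}_{\op{A+D}_i\op{C}}>$. Thus $\overline{\mathrm{span}\SRB{\phi}{n_c+1}}\subseteq\op{S}_i^*$, and the quotient $\op{X}/\op{S}_i^*$ has dimension no larger than that of $\mathrm{span}\{\phi_1,\dots,\phi_{n_c}\}$, i.e.\ it is finite-dimensional.

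With finite-dimensionality secured, I would finish exactly along the lines of Theorem~\ref{Thm:SpecialCase1FDICond}. Since $\op{S}_i^*$ is by Definition~\ref{Def:UnobservabilitySpace} the unobservable subspace of the pair $(H_i\op{C},\op{A+D}_i\op{C})$, factoring it out renders the induced Fin-D pair $(\op{M}_i,\op{A}_p)$ observable, where $\op{A}_p=(\op{A+D}_i\op{C})|_{\op{X}/\op{S}_i^*}$ and $\op{M}_i\op{P}_i=H_i\op{C}$. Standard finite-dimensional pole placement then furnishes a gain $\op{D}_o$ assigning all eigenvalues of $\op{A}_p+\op{D}_o\op{M}_i$ to the open left half-plane, and for such a Hurwitz matrix the Lyapunov equation \eqref{Eq:ErrorStabilityCondition} admits a positive-definite bounded solution $\op{P}_e$. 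This is precisely the extra hypothesis demanded by the ``if'' part of Theorem~\ref{Thm:General_Nes_Suff_Cond}, so invoking that theorem yields solvability of the FDI problem, completing the argument.

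The only genuinely delicate step is the containment $\overline{\mathrm{span}\SRB{\phi}{n_c+1}}\subseteq\op{S}_i^*$; everything downstream is the routine Fin-D observer and Lyapunov construction. The subtlety I would stress is that this containment must be verified for the \emph{specific} friend $\op{D}_i$ defining $\op{S}_i^*$, which is why the eigenvector-preservation identity $\op{D}_i\op{C}\phi_i=0$ (for $i>n_c$) and the passage from $\op{A+D}_i\op{C}$-invariance to $\fld{T}_{\op{A+D}_i\op{C}}$-invariance via Corollary~\ref{Col:A-T-Inv} are the crux. Once the infinite tail is absorbed into the unobservability subspace, the FDI problem collapses to a finite-dimensional system, in which \eqref{Eq:NecessaryCondition} is classically both necessary and sufficient.
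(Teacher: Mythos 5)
Your proposal is correct and follows essentially the same route as the paper: show that $\overline{\mathrm{span}\SRB{\phi}{n_c+1}}\subseteq\ker\op{C}$ is absorbed into $\op{S}_i^*$, conclude that the quotient pair $(\op{M}_i,\op{A}_p)$ is finite-dimensional and observable, and then invoke the standard Fin-D observer construction together with Theorem~\ref{Thm:General_Nes_Suff_Cond} for both directions. If anything, you justify the $\fld{T}_{\op{A+D}_i\op{C}}$-invariance of the infinite tail (via preservation of the generalized eigenvector chains and Corollary~\ref{Col:A-T-Inv}) more explicitly than the paper, which simply asserts it.
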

\begin{proof}
		({\bf if part}): Note that $\op{X}/\op{S}_1^*$ is a Fin-D vector space and the system ($M_1$, $A_{p}$) (where $A_{p} = (\op{A+D}_1\op{C})|_{\op{S}_1^*}$ and $M_1\op{P}_1 = H_1\op{C}$) is observable and Fin-D. Therefore, there always exists the operator $\op{D}_o$ such that the observer \eqref{Eq:Filter} can both detect and isolate the fault $f_i$. Given that the detection filter is Fin-D, the stability of the error dynamics is guaranteed by the observability of the system ($M_1$, $A_{p}$). \\
		({\bf only if part}): This follows from the results that are stated in Theorem \ref{Thm:General_Nes_Suff_Cond}.\\
		This completes the proof of the theorem.
\end{proof}

\subsection{Summary of Results}
In this section, the FDI problem was formulated by invoking invariant subspaces that were introduced and developed in Section \ref{Sec:InvSpace}. We first derived in Theorem \ref{Thm:General_Nes_Suff_Cond}  necessary  and sufficient conditions  for solvability of the FDI problem. Moreover, it was shown that for two special classes of  regular RS systems there exists a \emph{single} necessary and sufficient condition (that is, the condition \eqref{Eq:NecessaryCondition}) for solvability of the FDI problem.
Table \ref{Tab:Procedure_RS} summarizes and provides a pseudo-code and procedure for detecting and isolating faults in the RS system \eqref{Eq:FaultySys}.
\begin{remark}
	As illustrated above, the main difficulty in deriving a single necessary and sufficient condition for solvability of the FDI problem for a regular RS system has its roots in the relationship between the condition $\ssp{N}=0$ and the existence of a bounded observer gain $\op{D}$ such that the corresponding error dynamics is exponentially stable. Another possible approach that one can investigate and pursue is through a frequency-based approach that was originally developed in \cite{Zwart_Book} to investigate the disturbance decoupling problem. This approach  deals with the Hautus test, and as shown in \cite{JacobZwart2004} the Hautus test does also involve certain difficulties for Inf-D systems. Specifically,  there exist certain Inf-D systems that pass the Hautus test, however they are not observable. Notwithstanding the above, the investigation of  utilizing a frequency-based approach for tackling the FDI problem and its relationship with invariant subspaces that are introduced in our work is beyond the scope of this paper, and therefore we suggest this line of research as part of our future work.
\end{remark}
\begin{table}[h]
	\caption{Pseudo-algorithm for detecting and isolating the fault $f_i$ in the regular RS system \eqref{Eq:FaultySys}.}
	\centering
	\begin{tabularx}{1\columnwidth}{|X|}
		\hline
		\hline
		\begin{enumerate}
			\item {\tt Compute the minimal conditioned invariant subspace $\ssp{W}^*$ containing all $\op{L}_j$ subspaces such that $j\neq i$ (by using the algorithm \eqref{Eq:T-CondSpaceAlg} where $\ssp{L}=\sum_{j\neq i} \op{L}_j$).}
			\item {\tt Compute the unobservability subspace $\op{S}_i^*$ containing $\sum_{j\neq i} \op{L}_j^1$ (by using the algorithm \eqref{Eq:T-UnobSpaceAlg}).}
			\item {\tt Compute the operator $\op{D}_i$ such that $\op{D}_i\in\underline{\op{D}}(\ssp{W}^*)$.}
			\item {\tt Find the operator $H_i$ such that $\ker H_i\op{C} = \overline{\ssp{W}^*+\ker\op{C}} = \overline{\op{S}_i^*+\ker C}$.}
			\item {\tt If $\op{S}_i^*\cap\op{L}_i = 0$,  then the necessary condition for solvability of the FDI problem is satisfied. Moreover, if one of the following conditions are satisfied, the FDI problem is solvable. In other words, one can design a detection filter according to the structure provided in \eqref{Eq:Filter} to detect and isolate $f_i$,}
			\begin{itemize}
				\item {\tt If there exists a bounded operator $\op{D}_o$ such that the conditions of Theorem \ref{Thm:General_Nes_Suff_Cond} are satisfied, or}
				\item {\tt The operator $\op{A}_{p} = (\op{A+D}_i\op{C})|_{\op{X}/\op{S}_i^*}$  has finite number of positive eigenvalues, or}
				\item {\tt If $\dim(\op{X}/\op{S}^*_i)<\infty$.}
			\end{itemize}
			{\tt The operators in the detection filter \eqref{Eq:Filter} are defined as follows. Let $\op{P}_i$ denote the canonical projection of $\op{S}_i^*$, then $\op{A}_o = (\op{A+D}_i\op{C})|_{\op{X}/\op{S}_i^*}+\op{D}_o\op{M}_i$, $\op{B}_o=\op{P}_i\op{B}$,  $\op{M}_i\op{P}_i=H_i\op{C}$, $\op{E}_i=\op{D}_oH_i$ and $\op{D}_o$ is selected such that $\op{A}_o$ satisfies the condition of Lemma \ref{Lem:Stability}. Moreover, the output of the detection filter (i.e., $r_i(t)$) is the residual that satisfies the condition \eqref{Eq:FDICond}.}
		\end{enumerate}\\
		\hline
	\end{tabularx}
	\label{Tab:Procedure_RS}
\end{table}

Finally, to add further clarification and information we have provided in Figure \ref{Fig:Struct} a schematic summarizing and depicting the relationships among the various lemmas, theorems and corollaries that are presented and developed in this paper.
\begin{figure*}
	\centering
	\includegraphics[scale=0.75]{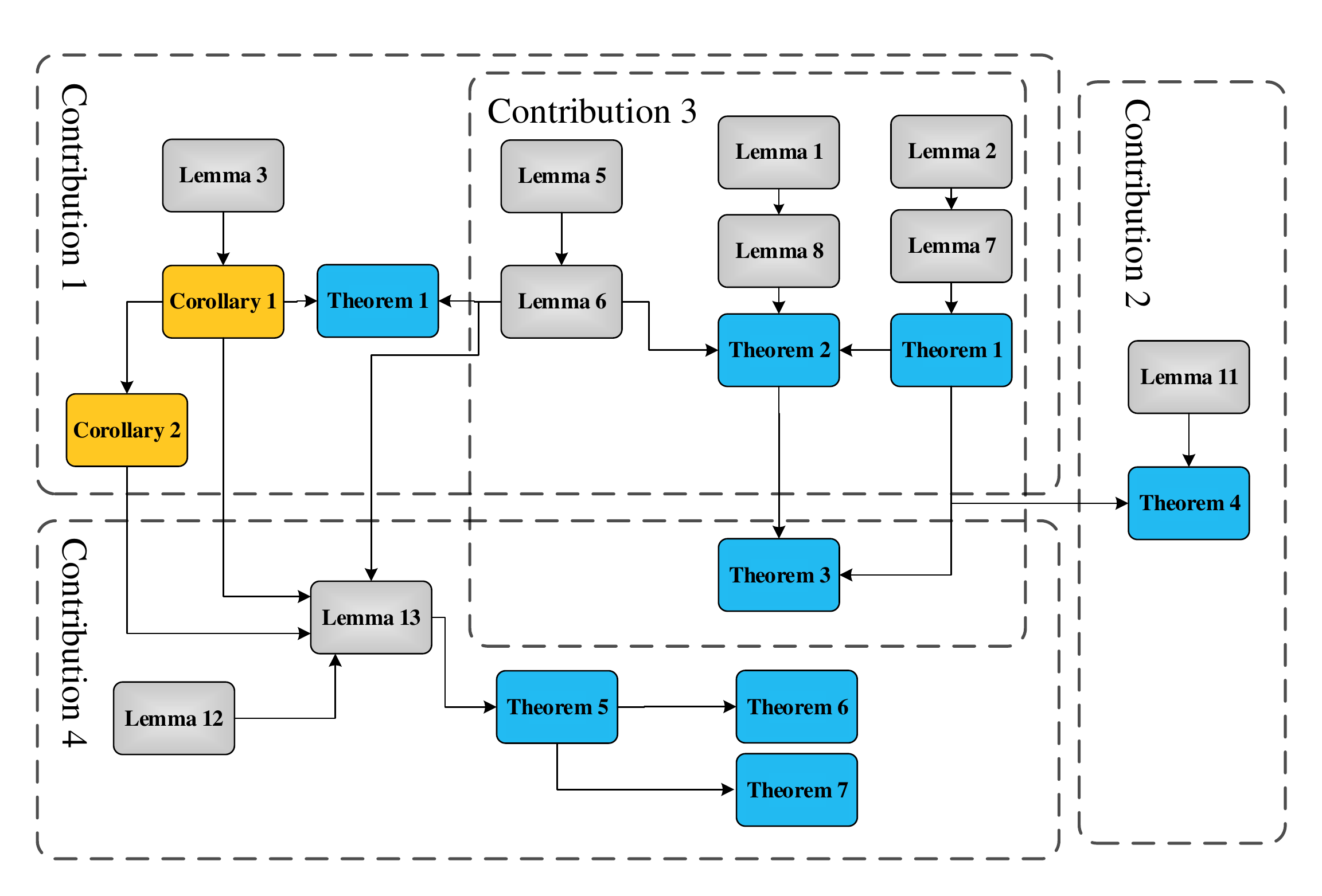}
	\caption{The flowchart  depicting the relationships among lemmas, theorems and corollaries that are developed and presented in this paper.}
	\label{Fig:Struct}
\end{figure*}
%==============================================================================

%===================Numerical Example=====================%
\section{Numerical Example}\label{Sec:SimResult}
In this section, we provide a numerical example to demonstrate the applicability of our proposed approach. Consider the following parabolic PDE system
\begin{align}\label{Eq:PDE}
\bbm \pardiff{\tilde{x}_1(t,z)}{t}\\  \pardiff{\tilde{x}_2(t,z)}{t}\ebm = &\bbm \frac{\partial^2 }{\partial z^2} &0.1\\ 0.1 &\frac{\partial^2 }{\partial z^2}\ebm\bbm \tilde{x}_1(t,z)\\ \tilde{x}_2(t,z)\ebm + b_1(z)\tilde{u}_1(t,z)+b_2(z) \tilde{u}_2(t,z)+L_1(z)\tilde{f}_1(t,z)\notag\\&+L_2(z) \tilde{f}_2(t,z)+\bbm \nu_1(t,z)\\ \nu_2(t,z)\ebm,\notag\\
y_1(t) =& \int_{0}^{\pi}c_1(z)\tilde{x}(t,z)\text{d}z+ w_1(t,z), \tilde{x}_i(t,0) =0,\; i=1,2,\\ y_2(t) =& \int_{0}^{\pi}c_2(z)\tilde{x}(t,z)\text{d}z+w_2(t,z), \frac{\partial \tilde{x}_i(t,0)}{\partial z}=0,\; i=1,2,\notag
\end{align}
where $\tilde{x}(t,z) = [\tilde{x}_1(t,z),\tilde{x}_2(t,z)]^\tran\in\fld{R}^2$ and $\tilde{u}_i(t,z)\in\fld{R}$ denote the state and input, respectively. Also, $z\in[0,\pi]$ denotes the spatial coordinate, and $c_i\in \fld{L}_2([0,\pi])^2$, where $\fld{L}_2([0,\pi])$ denotes the space of all square integrable functions over $[0,\pi]$. Also $\nu_i$'s and $w_i$'s ($i=1,2$) denote the process and measurement noise that are assumed to be normal distributions with 0.5 and 0.2 variances, respectively.

It should be pointed out that  the PDE system \eqref{Eq:PDE} represents a linearized approximation to  the model that corresponds to a large class of chemical processes, such as the two-component reaction-diffusion process (for more detail refer to \cite{ChrisBook}). Moreover, the faults $f_1$ and $f_2$ represent malfunctions in the heat jackets (these jackets are modeled by invoking the input vectors $b_1$ and $b_2$).

The system \eqref{Eq:PDE} can be expressed in the representation of \eqref{Eq:FaultySys}  by utilizing the spectral operator $\op{A} = \bbm \frac{\partial^2 }{\partial z^2} &0.1\\ 0.1 &\frac{\partial^2 }{\partial z^2}\ebm$ (and neglecting the disturbances and noise signals $\nu_i$ and $w_i$), where the domain of $\op{A}$ is defined by \cite{Curtain_Book} (Chapter 1):
$$D(\op{A})=\{x\in \bsm\fld{L}_2([0,\pi])^2\esm|\; x,\frac{\mr{d}x}{\mr{d}z}\;\mathrm{are}\;\mathrm{absolutely}\; \mathrm{continuous}\}.$$
By solving the corresponding Sturm-Liouville problem \cite{KreysziG_Book}, the eigenvalues of $\op{A}$ are obtained as $\lambda_k^1 = 0.1-k^2, \lambda_k^2 = -0.1-k^2, k\in\fld{N}$, and the corresponding eigenfunctions are given by $\phi_k^1 = \sqrt{\frac{2}{\pi}}[\sin(kz), \sin(kz)]^\tran$ and $\phi_k^2 = \sqrt{\frac{2}{\pi}}[\sin(kz), -\sin(kz)]^\tran$. Moreover, $\psi_k = \sqrt{\frac{2}{\pi}}\cos(kz)$'s are bi-orthogonal functions.
Consider the system \eqref{Eq:PDE}, where $c_1(z) = \begin{cases}[1,1]^\tran &;\;0\leq z\leq \pi/4\\  0&;\;\mathrm{Otherwise}\end{cases}$, and $c_2(z) = \begin{cases} [1,-1]^\tran&;\;3\pi/4\leq z\leq\pi\\ 0&;\;\mathrm{Otherwise}\end{cases}$.

Let us assume $b_i(z) =\sum_{k=5}^\infty\zeta_k^i\phi_k^i$, where $\zeta_k^1 = [\frac{1}{k}\;, \frac{1}{k}]^\tran$, and  $\zeta_k^2 = [\frac{1}{k^2}\;, -\frac{1}{k^2}]^\tran$ for $k>5$. Moreover, let $L_i(z)=b_i(z)$ $i=1,2$ (for all $z\in[0, \pi]$) represent actuator faults. Finally,  let  $\op{C} = [<c_1,\cdot>,\; <c_2,\cdot>]^\tran$, with $c_1$ and $c_2$ given above. As observed below the condition for the Case 1 stated in Section \ref{Sec:FDISpecialCase} does indeed hold.

In the following,  a detection filter is designed for detecting and isolating the fault $f_1$.
Since $\ssp{L}_2 = \spanset{L_2}\in\dom{A}$ and $\ssp{L}_2\not\in\ker \op{C}$, we obtain $\ssp{Z}^*=\ssp{Z}_1=\ssp{L}_2$ from the algorithm \eqref{Eq:T-CondSpaceAlg}.
%Moreover, $\ssp{L}_2\in\dom{A}$ and $\ssp{L}_2\not\in\ker\op{C}$.
Hence, one can write $\ssp{W}_\ell = 0$ (since $\ssp{L}_\ssp{N}=0$). Therefore, $\ssp{W}^*=\ssp{L}_2$. By setting $\ssp{W}_{\phi,\mr{f}}^* = \ssp{W}_\mr{f}^*$ and since $c_1\perp\phi_k^2$ for all $k\in\fld{N}$, $0\in\roi{A}$, we have $\ssp{N}+\ssp{L}_2 =\overline{\spanset{\phi_k^2}_{k\in\fld{N}}}$ (i.e., the unobservable subspace of the system \eqref{Eq:PDE} with only one input $y=c_1x$). Given that $\ssp{W}^*=\ssp{L}_2$, one obtains  $\op{S}_1^* = \overline{\spanset{\phi_k^2}_{k\in\fld{N}}}$. It follows that $\ssp{L}_1\cap\op{S}_1^*=0$, and a solution to the corresponding maps $\op{D}_1$ and $H_1$ is given by $\op{D}_1 = 0$ and $H_1 = [1,0]$. The factored out subsystem can therefore be specified by using the canonical projection on $\op{S}_1^*$, that is $\op{P}_1:\op{X}\rightarrow\op{X}/\op{S}_1^*$, as follows
\begin{equation}
\begin{split}
\dot{\omega}_1(t) &=\op{A}_p\omega(t)+\op{P}_1\op{B}u(t)+\op{P}_1\op{L}_1f_1(t),\\
y_\omega(t) &= \op{M}_1\omega_1(t),
\end{split}
\end{equation}
where $\omega_1\in\op{X}/\op{S}_1^*$, $u=[u_1,\;u_2]^\tran$, $y_\omega = H_1y$, $\op{A}_p$ and $\op{M}$ are solutions to the equations $\op{A}_p\op{P} = \op{P}\op{A}$ and $\op{MP}=H\op{C}$, respectively, and are given by
\begin{equation}
\op{A}_p = \frac{\partial^2 }{\partial z^2}+0.1,\;\;
\op{M}_1\omega_1= <c_2,\omega_1>.
\end{equation}

Since all the eigenvalues of $\op{A}_p$ are negative (the condition for Case 1 in the Subsection \ref{Sec:FDISpecialCase}), by using Theorem \ref{Thm:SpecialCase1FDICond}  a detection filter is therefore specified according to
\begin{equation}
\begin{split}
\dot{\omega}_1(t) &=\op{A}_o \omega_1(t)+ \op{P}_1\op{B}u(t),\\
r_1(t) &= H_1y(t) - \op{M}_1\omega_1(t),
\end{split}
\end{equation}
where $\op{A}_o = \op{A}_p$. In other words, the detection filter to detect and isolate the fault $f_1$ is given by
\begin{equation}
\frac{\partial \tilde{\omega}_1(t,z)}{\partial t} = \frac{\partial^2\tilde{\omega}_1(t,z)}{\partial z^2}+0.1\tilde{\omega}_1(t,z)+ b_{11}(z)\tilde{u}_1(t,z)+ b_{22}(z)\tilde{u}_2(t,z)
\end{equation}
where $\tilde{\omega}_1(t,z)\in\fld{R}$ is the corresponding function to $\omega_1(t)\in\op{X}$, $[b_{11}(z), b_{22}(z)]^\tran = \op{P}[b_1(z),b_2(z)]^\tran$. The error dynamics corresponding to the above detection filter (i.e., $e(t)=\op{P}_1x(t)-{\omega}_1(t)$) is given by $\dot{e}(t)=\op{A}_p e(t) + \op{P}_1\op{L}_1f_1(t)$. Therefore, if $f_1=0$, the error converges to zero exponentially. Otherwise, $e\neq 0$. The above residual (i.e, $r_1$) corresponding to the fault $f_1$ is also decoupled from $f_2$. By following along the same steps as above, one can also design a detection filter to detect and isolate the fault $f_2$. These details are not included for brevity.

For the purpose of simulations,  we consider a scenario where the fault $f_1$ with a severity of $2$ occurs at $t=5\;sec$ and the fault $f_2$ with a severity of $-1$ occurs at $t=7\;sec$. Figure \ref{Fig:States} depicts the states of the system \eqref{Eq:PDE} (namely, $\tilde{x}_1$ and $\tilde{x}_2$ with disturbances and noise signals $\nu_i$ and $w_i$ included in the simulations), and Figure \ref{Fig:Residuals} depicts the residuals $r_1$ and $r_2$. It clearly follows that $r_i$ is only sensitive to the fault $f_i$, $i=1,2$. Note that the thresholds are computed based on running $70$ Monte Carlo simulations for the \emph{healthy system}, where the thresholds are selected as the maximum residual signals $r_1$ and $r_2$ during the entire simulation runtime. The selected  thresholds are $th_1 = 0.09$ and $th_2 = 0.064$, corresponding to the residual signals $r_1$ and $r_2$, respectively. The faults $f_1$ and $f_2$ are detected at $t=5.051\;sec$ and $t=7.31\;sec$, respectively. Table \ref{Tab:FltDetectTime} shows the detection times corresponding to various fault severity  cases that are simulated. This table clearly shows the impact of the fault severity levels on the detection times. In other words, the lower the fault severity, the longer the detection time delay. Moreover, the minimum detectable fault severities associated with  $f_1$ and $f_2$ for this example are determined to be $0.05$ and $0.15$, respectively.
\begin{figure}
	\centering
	\begin{subfigure}{0.4\textwidth}
		\includegraphics[scale = 0.5]{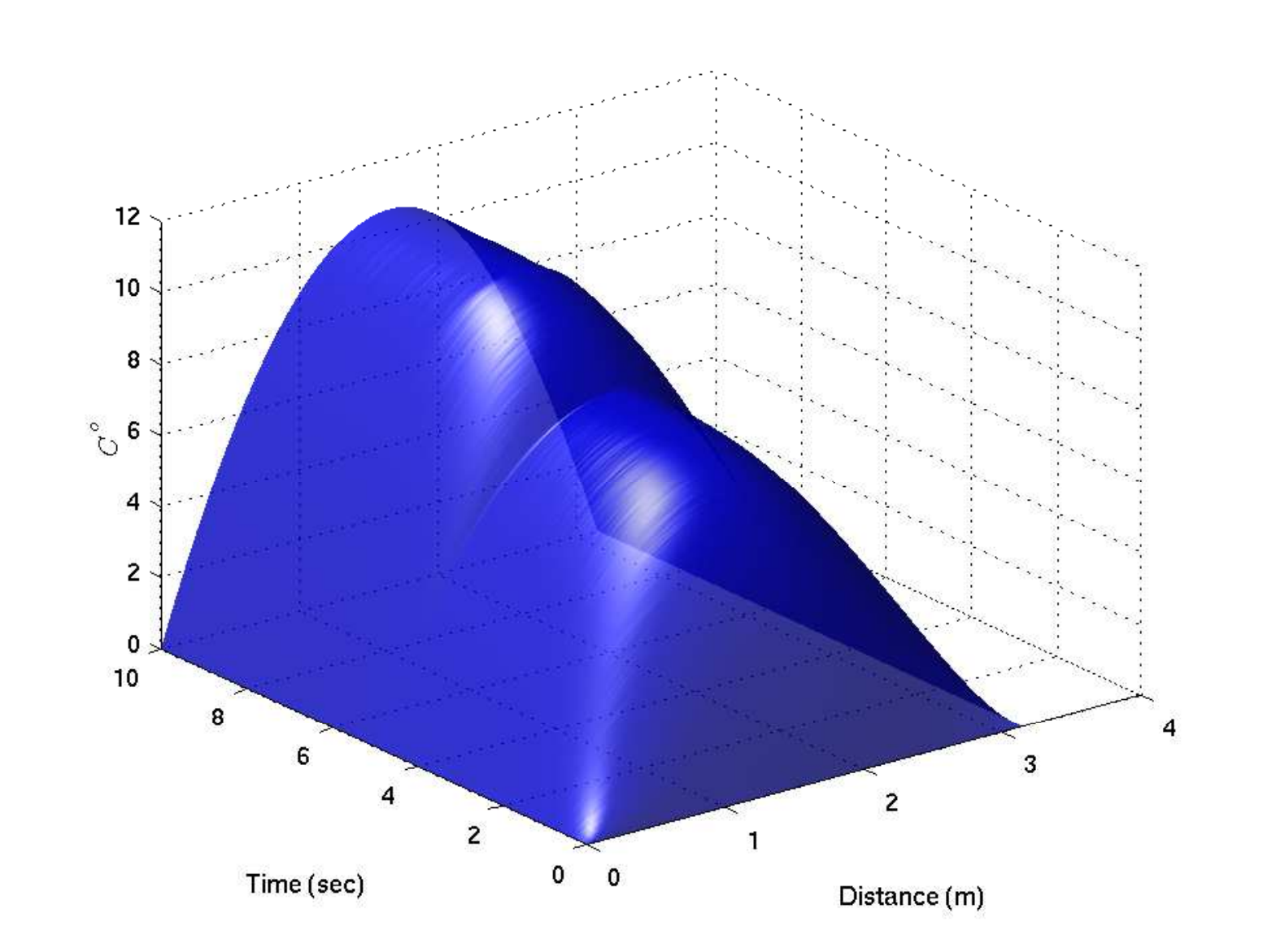}
		\caption{The state $\tilde{x}_1$.}
	\end{subfigure}
	~
	\begin{subfigure}{0.4\textwidth}
		%\centering
		\includegraphics[scale = 0.5]{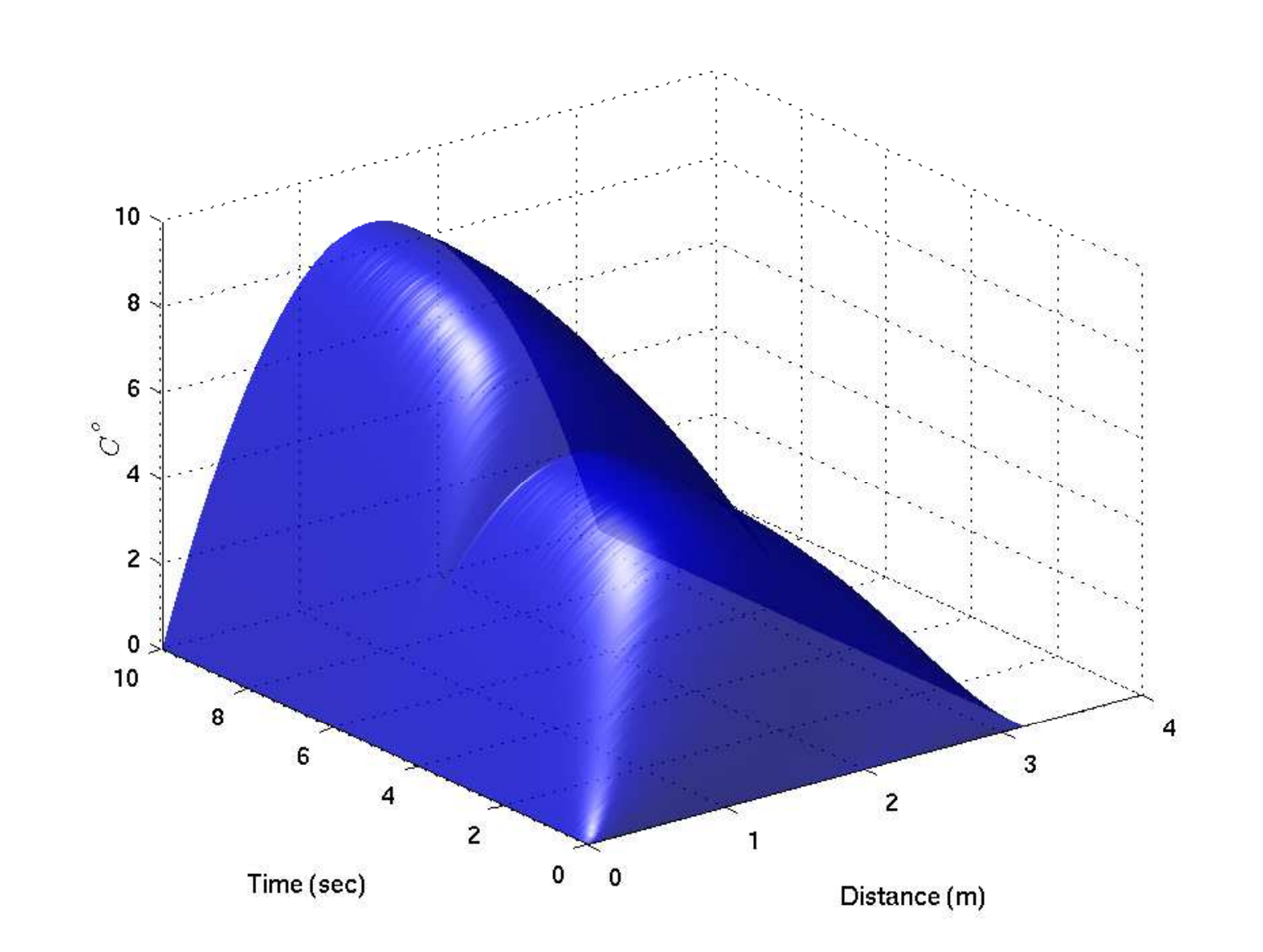}
		\caption{The state $\tilde{x}_2$.}
		
	\end{subfigure}
	\caption{The states of the system \eqref{Eq:PDE}. The faults $f_1$ and $f_2$ occur at $t=5\;sec$ and $t=7\;sec$ with severities of $2$ and $-1$, respectively.}
	\label{Fig:States}
	\vspace{-2mm}
\end{figure}
\begin{table}[h]
	\caption{Detection time delays of the faults $f_1$ and $f_2$ corresponding to various severities.}
	\centering
	\begin{tabular}{|l|c|c|}
		\hline
		\backslashbox{Severity}{Fault} &$f_1\;(sec)$ &$f_2\;(sec)$\\
		\hline
		$\begin{matrix}
		f_1 =2,\\
		f_2 =-1
		\end{matrix}$	&$0.051$	&$0.31$\\
		\hline
		$\begin{matrix}
		f_1 =0.5,\\
		f_2 =0.5
		\end{matrix}$	&$0.21$	&$0.555$\\
		\hline
		$\begin{matrix}
		f_1 =0.09,\\
		f_2 =0.2
		\end{matrix}$	&$1.18$	&$1.04$\\
		\hline
		$\begin{matrix}
		f_1 =0.05,\\
		f_2 =0.15
		\end{matrix}$	&$4.7$	&$1.34$\\
		\hline
	\end{tabular}
	\label{Tab:FltDetectTime}
\end{table}
\begin{figure}
	\centering
	\begin{subfigure}{0.4\textwidth}
		\includegraphics[scale = 0.5]{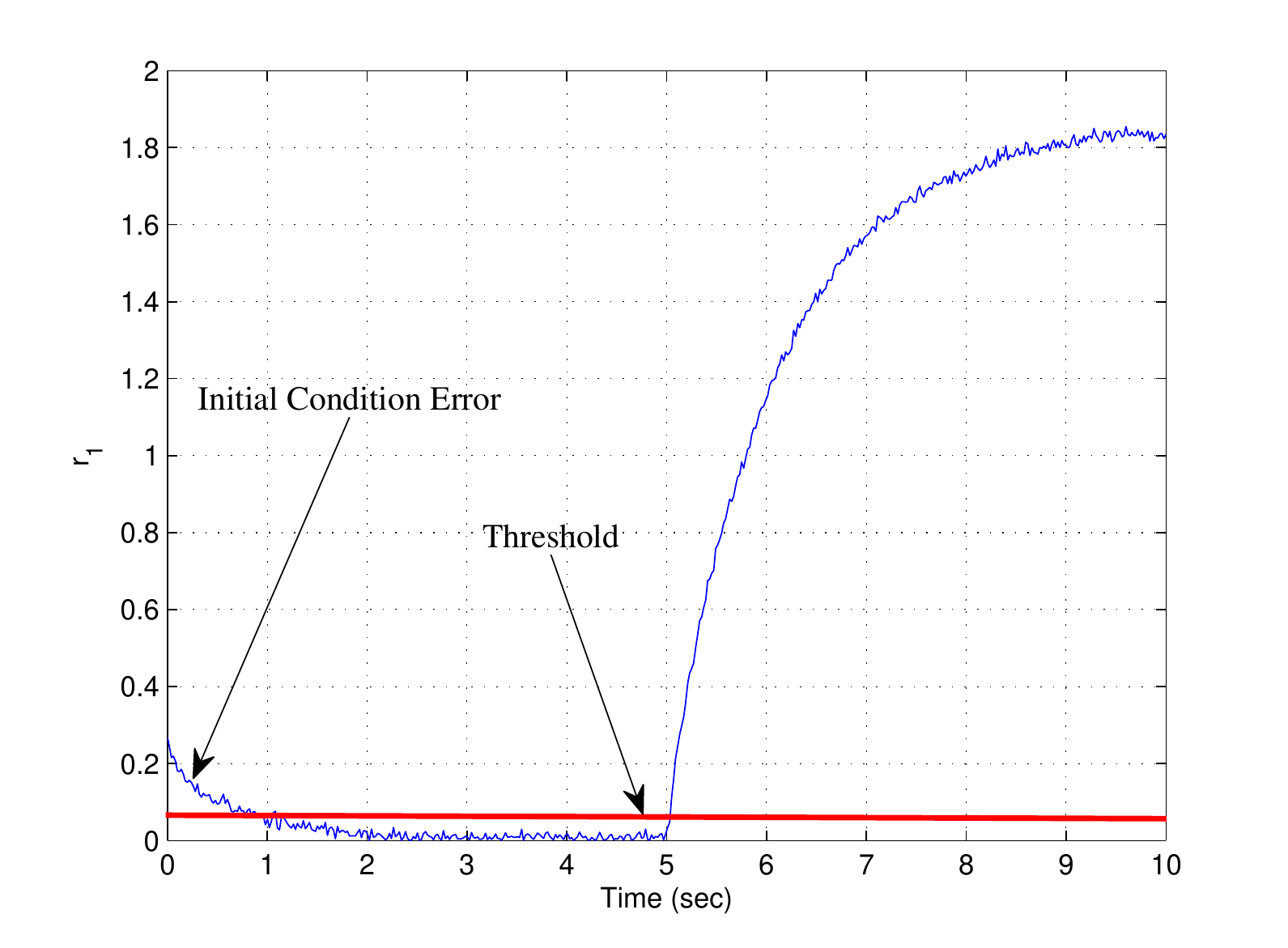}
		\caption{The residual signal $r_1$ for detecting and isolating the fault $f_1$.}
	\end{subfigure}
	~
	\begin{subfigure}{0.4\textwidth}
		%\centering
		\includegraphics[scale = 0.5]{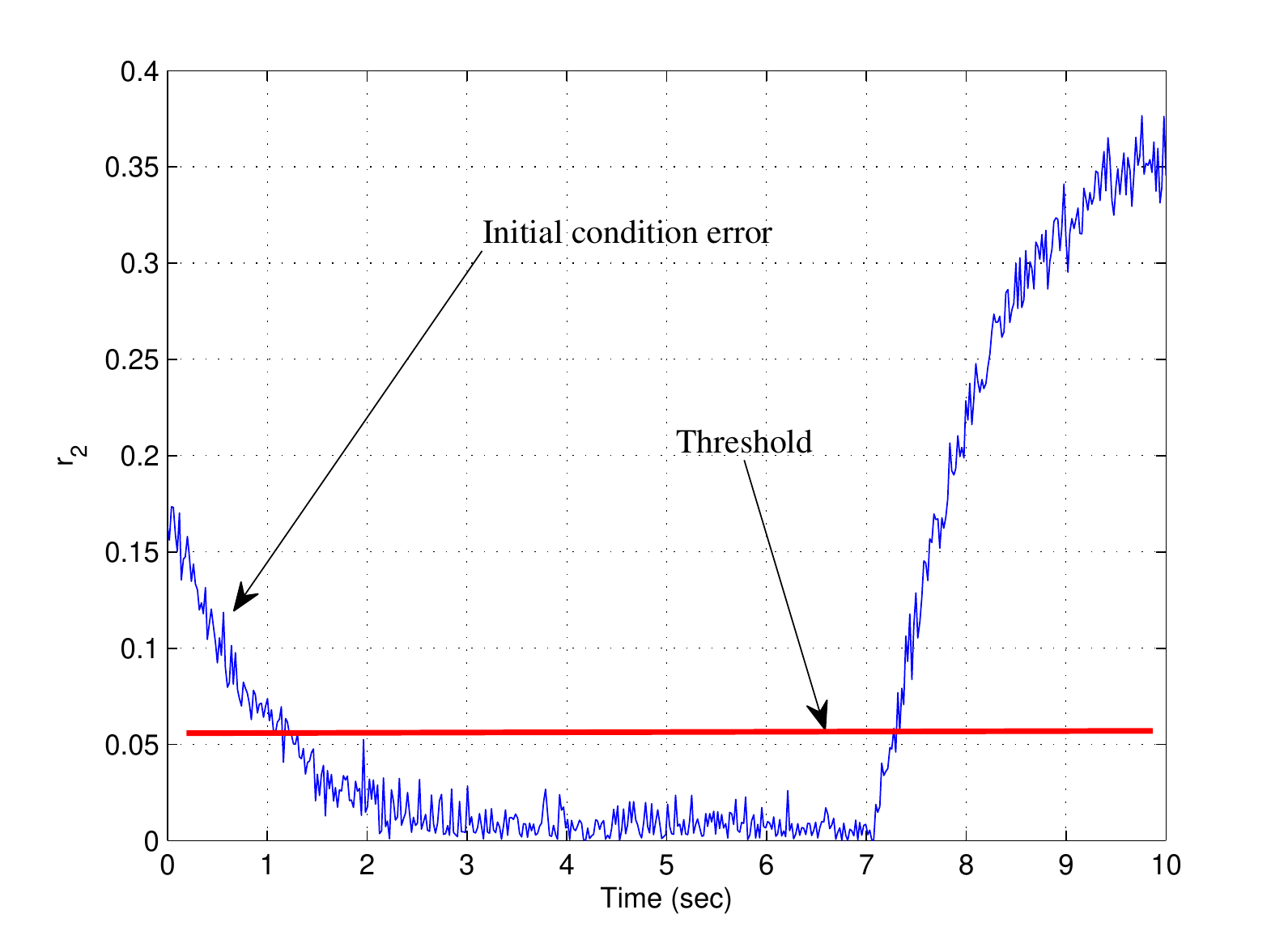}
		\caption{The residual signal $r_2$ for detecting and isolating the fault $f_2$.}
	\end{subfigure}
	\caption{The residual signals for detecting and isolating the faults $f_1$ and $f_2$. The faults occur at $t=5\;sec$ and $t=7\;sec$ with severities of $2$ and $-1$, respectively.}
	\label{Fig:Residuals}
	\vspace{-2mm}
\end{figure}

\begin{remark}
	When compared with {approximate} approaches that are developed in \cite{ACC2012,Armo_Demetrio} and \cite{Davis_Jor}  two main issues are worth pointing out:
	\begin{enumerate}
		\item The approximation of the system \eqref{Eq:FaultySys} is based on  only the operator $\op{A}$. As stated in \cite{Davis_Jor}, the system \eqref{Eq:FaultySys} was approximated by using the first two to four eigenvalues. However, since the fault signatures (namely, $L_1$ and $L_2$) in the above example have {no effect} on the eigenspaces of the first five eigenvalues, the faults $f_1$ and $f_2$ \underline{would not have been detectable} by using the approaches in \cite{ACC2012,Armo_Demetrio} and \cite{Davis_Jor}.
		\item In the references \cite{ACC2012,Armo_Demetrio} and \cite{Davis_Jor}, the Inf-D system is required to have eigenvalues  that are far in the left-half plane, that result in an extremely fast transient times (refer to Assumption 1 in \cite{ACC2012}), whereas our proposed approach in this paper {does not} suffer from this restriction and limitation.
	\end{enumerate}
\end{remark}

%%%%%%%%%%%%%%%%%%%%%%%%%%%%%%Conclusion%%%%%%%%%%%%%%%%%%%%%%%%%%%%%%%%%%%%%%%%
\section{Conclusion}\label{Sec:Conclusion}
In this paper, geometric characteristics associated with the regular Riesz spectral (RS) systems are investigated and new properties are introduced, specified, and developed. Specifically, various types of invariant subspaces such as the $\op{A}$- and $\fld{T}$-conditioned invariant and unobservability subspaces are developed and analyzed. Moreover, necessary and sufficient conditions for equivalence of  various conditioned invariant subspaces are also provided. Under certain conditions,  the algorithms corresponding to computing invariant subspaces are shown to indeed converge in a finite number of steps. Finally, we formulate and introduce the problem of fault detection and isolation (FDI) of RS systems, \emph{for the first time in the literature},  in  terms of  invariant subspaces.  For regular RS systems, we have developed and presented necessary and sufficient conditions for solvability of the FDI problem.
%%%%%%%%%%%%%%%%%%%%%%%%%Refrences%%%%%%%%%%%%%%%%%%%%%%%%%%%%%%%%
\bibliographystyle{IEEEtr}
\bibliography{InfiniteDim_Ref_Final}

\begin{thebibliography}{10}

\bibitem{Isermann_Book}
R.~Isermann, {\em Fault-diagnosis systems: an introduction from fault detection
  to fault tolerance}.
\newblock Springer, 2006.

\bibitem{Chen_Book}
J.~Chen and R.~J. Patton, {\em Robust model-based fault diagnosis for dynamic
  systems}.
\newblock Kluwer Academic, 1999.

\bibitem{Frank_FDI}
P.~M. Frank, ``Fault diagnosis in dynamic systems using analytical and
  knowledge-based redundancy- a survey and some new results,'' {\em
  Automatica}, vol.~26, pp.~459--474, 1990.

\bibitem{Curtainspectral}
R.~F. Curtain, ``Spectral systems,'' {\em International Journal of Control},
  vol.~39, no.~4, pp.~657--666, 1984.

\bibitem{Krstic_Book2010}
A.~Smyshlyaev and M.~Krstic, {\em Adaptive control of parabolic PDEs}.
\newblock Princeton University Press, 2010.

\bibitem{Christofides_FaultControl}
A.~Gani, P.~Mhaskar, and P.~Christofides, ``Fault-tolerant control of a
  polyethylene reactor,'' {\em Journal of Process Control}, vol.~17, no.~5,
  pp.~439--451, 2007.

\bibitem{Lion_Book}
J.-L. Lions, {\em Some aspects of the optimal control of distributed parameter
  systems}.
\newblock No.~6, SIAM, 1972.

\bibitem{ACC2012}
A.~Baniamerian and K.~Khorasani, ``Fault detection and isolation of dissipative
  parabolic {PDE}s: finite-dimensional geometric approach,'' in {\em 2012
  American Control Conference}, pp.~5894 -- 5899, 2012.

\bibitem{ACC2013}
A.~Baniamerian, N.~Meskin, and K.~Khorasani, ``Geometric fault detection and
  isolation of two-dimensional (2{D}) systems,'' in {\em 2013 American Control
  Conference}, pp.~3541--3548, IEEE, 2013.

\bibitem{Armo_Demetrio}
A.~Armaou and M.~Demetriou, ``Robust detection and accommodation of incipient
  component and actuator faults in nonlinear distributed processes,'' {\em
  AIChE Journal}, vol.~54, no.~10, pp.~2651--2662, 2008.

\bibitem{Curtain_Book}
R.~Curtain and H.~Zwart, {\em An introduction to infinite-dimensional linear
  systems theory}.
\newblock Texts in Applied Mathematics, Springer-Verlag, Springer, New York,
  1995.

\bibitem{Pazy_Book}
A.~Pazy, {\em Semigroups of linear operators and applications to partial
  differential equations}, vol.~198.
\newblock Springer New York, 1983.

\bibitem{Davis_Jor}
N.~El-Farra and S.~Ghantasala, ``Actuator fault isolation and reconfiguration
  in transport-reaction processes,'' {\em AICHE Journal}, vol.~53,
  pp.~1518--1537, 2007.

\bibitem{Davis_Thesis}
S.~Ghantasala, {\em Fault diagnosis and fault-tolerant control of
  transport-reaction processes}.
\newblock PhD thesis, UC-Davis, 2010.

\bibitem{SpeyerBeam}
S.~Liberatore, J.~L. Speyer, and A.~C. Hsu, ``Fault detection filter applied to
  structural health monitoring,'' in {\em 2003 Conference on Decision and
  Control}, vol.~2, pp.~1944--1949, IEEE, 2003.

\bibitem{Demetrio}
M.~A. Demetriou, ``A model-based fault detection and diagnosis scheme for
  distributed parameter systems: A learning systems approach,'' {\em
  ESAIM-Control Optimisation and Calculus of Variations}, vol.~7, pp.~43--67,
  2002.

\bibitem{Demetriou_Adap_Jornal2007}
M.~Demetriou, K.~Ito, and R.~Smith, ``Adaptive monitoring and accommodation of
  nonlinear actuator faults in positive real infinite dimensional systems,''
  {\em IEEE Transactions on Automatic Control}, vol.~52, no.~12,
  pp.~2332--2338, 2007.

\bibitem{Demetriou_Paper2002}
M.~Demetriou and K.~Ito, ``Online fault detection and diagnosis for a class of
  positive real infinite dimensional systems,'' in {\em 2002 Conference on
  Decision and Control}, vol.~4, pp.~4359--4364, IEEE, 2002.

\bibitem{Wonham_Book}
W.~M. Wonham, {\em Linear multivariable control: a geometric approach}.
\newblock Springer-Verlag, second~ed., 1985.

\bibitem{Basile_Book}
G.~Basile and G.~Marro, {\em Controlled and Conditioned Invariants in Linear
  Systems Theory}.
\newblock Prentice-Hall, 1992.

\bibitem{Mass_Thesis}
M.~A. Massoumnia, {\em A geometric approach to failure detection and
  identification in linear systems}.
\newblock PhD thesis, MIT, Dep. Aero. \& Astro., 1986.

\bibitem{Massoumnia_Alone}
M.~A. Massoumnia, ``A geometric approach to the synthesis of failure detection
  filters,'' {\em IEEE Transaction on Automatic Contrrol}, vol.~31, no.~9,
  pp.~839--846, 1986.

\bibitem{IsidoriCodis}
C.~D. Persis and A.~Isidori, ``On the observability codistributions of a
  nonlinear system,'' {\em Systems \& Control Letters}, vol.~40, no.~5,
  pp.~297--304, 2000.

\bibitem{IsidoriFDI}
C.~De~Persis and A.~Isidori, ``A geometric approach to nonlinear fault
  detection and isolation,'' {\em IEEE Transactions on Automatic Control},
  vol.~46, no.~6, pp.~853--865, 2001.

\bibitem{DrMeskin_Markov_TAC}
N.~Meskin and K.~Khorasani, ``A geometric approach to fault detection and
  isolation of continuous-time markovian jump linear systems,'' {\em IEEE
  Transactions on Automatic Control}, vol.~55, no.~6, pp.~1343--1357, 2010.

\bibitem{DrMeskin_Markov}
N.~Meskin and K.~Khorasani, ``Fault detection and isolation of discrete-time
  markovian jump linear systems with application to a network of multi-agent
  systems having imperfect communication channels,'' {\em Automatica}, vol.~45,
  pp.~2032--2040, 2009.

\bibitem{MKR_Nonlin2010}
N.~Meskin, K.~Khorasani, and C.~A. Rabbath, ``Hybrid fault detection and
  isolation strategy for non-linear systems in the presence of large
  environmental disturbances,'' {\em IET Control Theory \& Applications},
  vol.~4, no.~12, pp.~2879--2895, 2010.

\bibitem{MKR_SysTech2010}
N.~Meskin, K.~Khorasani, and C.~A. Rabbath, ``A hybrid fault detection and
  isolation strategy for a network of unmanned vehicles in presence of large
  environmental disturbances,'' {\em IEEE Transactions on Control Systems
  Technology}, vol.~18, no.~6, pp.~1422--1429, 2010.

\bibitem{SpeyerGeo}
R.~K. Douglas and J.~L. Speyer, ``${H}_\infty$ bounded fault detection
  filter,'' {\em Journal of Guidance, Control and Dynamics}, vol.~22, no.~1,
  pp.~129--138, 1999.

\bibitem{ECC2014}
A.~Baniamerian, N.~Meskin, and K.~Khorasani, ``Fault detection and isolation of
  {R}iesz spectral systems: A geometric approach,'' in {\em European Control
  Conference (ECC)}, pp.~2145--2152, IEEE, 2014.

\bibitem{Curtain_Invariance_1986}
R.~Curtain, ``Invariance concepts in infinite dimensions,'' {\em SIAM Journal
  on Control and Optimization}, vol.~24, no.~5, pp.~1009--1030, 1986.

\bibitem{Pandolfi_Disturbance_86}
L.~Pandolfi, ``Disturbance decoupling and invariant subspaces for delay
  systems,'' {\em Applied Mathematics \& Optimization}, vol.~14, no.~1,
  pp.~55--72, 1986.

\bibitem{Zwart_Book}
H.~Zwart, ``Geometric theory for infinite dimensional systems,'' {\em Geometric
  Theory for Infinite Dimensional Systems:, Lecture Notes in Control and
  Information Sciences, Volume 115. Springer-Verlag}, vol.~1, 1989.

\bibitem{Schwartz1954}
J.~Schwartz, ``Perturbations of spectral operators, and applications. i.
  bounded perturbations,'' {\em Pacific J. Math}, vol.~4, pp.~415--458, 1954.

\bibitem{ZwartGeneralEig2001}
B.~Guo and H.~Zwart, ``Riesz spectral systems,'' {\em Department of Applied
  Mathematics, University of Twente}, 2001.

\bibitem{DynFeedbackInf_Book}
J.~Schumacher, ``Dynamic feedback in finite- and infinite-dimensional linear
  systems,'' {\em MC Tracts}, vol.~143, pp.~1--175, 1981.

\bibitem{BanachTheory_Book}
R.~E. Megginson, {\em An introduction to Banach space theory}, vol.~183.
\newblock Springer, 1998.

\bibitem{SchmidtInv_1980}
E.~G. Schmidt and R.~J. Stern, ``Invariance theory for infinite dimensional
  linear control systems,'' {\em Applied Mathematics and Optimization}, vol.~6,
  no.~1, pp.~113--122, 1980.

\bibitem{Byrnes1998}
C.~I. Byrnes, I.~G. Lauko, D.~S. Gilliam, and V.~I. Shubov, ``Zero dynamics for
  relative degree one siso distributed parameter systems,'' in {\em Decision
  and Control, 1998. Proceedings of the 37th IEEE Conference on}, vol.~3,
  pp.~2390--2391, IEEE, 1998.

\bibitem{JacobZwart2004}
B.~Jacob and H.~Zwart, ``Counterexamples concerning observation operators for c
  0-semigroups,'' {\em SIAM Journal on Control and Optimization}, vol.~43,
  no.~1, pp.~137--153, 2004.

\bibitem{ChrisBook}
P.~D. Christofides, {\em Nonlinear and robust control of PDE systems: Methods
  and applications to transport-reaction processes}.
\newblock Springer Science \& Business Media, 2012.

\bibitem{KreysziG_Book}
E.~Kreyszig, {\em Advanced engineering mathematics}.
\newblock John Wiley \& Sons, 1988.

\end{thebibliography}

\end{document}